\newcommand{\zb}[1]{\ensuremath{\boldsymbol{#1}}}
\newcommand{\ww}{28mm}
\newcommand{\www}{35mm}
\newtheorem{remark}[theorem]{Remark}
\title{Linkage between Piecewise Constant Mumford-Shah model and ROF model and its virtue in image segmentation}
\author{Xiaohao Cai\thanks{
 Department of Applied Mathematics and Theoretical Physics (DAMTP),
 University of Cambridge, Cambridge CB3 0WA, UK.
 Mullard Space Science Laboratory (MSSL),  University College London, Surrey RH5 6NT, UK.
  Email:  x.cai@ucl.ac.uk. 
  XC acknowledges support from the EPSRC grant EP/K032208/1, Issac Newton Trust (University of Cambridge),
  and the Leverhulme grant.}
  \and
  Raymond Chan\thanks{ 
  Department of Mathematics, College of Science, City University of Hong Kong. Email:  rchan.sci@cityu.edu.hk.
  RC acknowledges support from the HKRGC Grants No. CUHK14306316, CityU Grant 9380101, CRF Grant C1007-15G, AoE/M-05/12.}
  \and
  Carola-Bibiane Sch{\"o}nlieb\thanks{
  Department of Applied Mathematics and Theoretical Physics (DAMTP), University of Cambridge,
  Cambridge CB3 0WA, UK. Email: cbs31@cam.ac.uk. 
  CBS acknowledges support from the Leverhulme Trust project ``Breaking the non-convexity barrier", the EPSRC grant EP/K032208/1, the EPSRC grant EP/M00483X/1, the EPSRC Centre Nr. EP/N014588/1, the RISE projects ChiPS and NoMADS, the Cantab Capital Institute for the Mathematics of Information, and the Alan Turing Institute.}
  \and
  Gabriele Steidl\thanks{
  Department of Mathematics, University of Kaiserslautern, Kaiserslautern, Germany. Email: steidl@mathematik.uni-kl.de.}
  \and
  Tieyong Zeng\thanks{
   Department of Mathematics, The Chinese University of Hong Kong,
  Shatin, Hong Kong. Email: zeng@math.cuhk.edu.hk.
  TZ acknowledges support from the National Natural Science Foundation of China under
Grant 11671002, CUHK start-up, CUHK DAG 4053296, 4053342, and the RGC 14300219.
}
  }
\begin{document}

\maketitle

\begin{abstract}
The piecewise constant Mumford-Shah (PCMS) model and the Rudin-Osher-Fatemi (ROF) model are two important variational models
in image segmentation and image restoration, respectively.
In this paper, we explore a linkage between these models. 
We prove that for the two-phase segmentation problem a partial minimizer of the PCMS model 
can be obtained by thresholding the minimizer of the ROF model.
A similar linkage is still valid for multiphase segmentation under specific assumptions.
Thus it opens a new segmentation paradigm: image segmentation can be done via image restoration plus thresholding.
This new paradigm, which circumvents the innate non-convex property of the PCMS model, therefore improves
the segmentation performance in both efficiency (much faster than state-of-the-art methods based on PCMS model,
particularly when the phase number is high)
and effectiveness (producing segmentation results with better quality) due to the flexibility of the ROF model in tackling degraded images,
such as noisy images, blurry images or images with information loss. As a by-product
of the new paradigm, we derive a novel segmentation method,
called {\it thresholded}-ROF (T-ROF) method, to illustrate the virtue of managing image segmentation through image restoration techniques.
The convergence of the T-ROF method is proved, and elaborate experimental results and 
comparisons are presented.
\end{abstract}

\begin{keywords}
Image segmentation, image restoration, Mumford-Shah model, piecewise constant Mumford-Shah model, Chan-Vese model, total variation ROF model, thresholding.
\end{keywords}


\section{Introduction}\label{sec:introduction}
Image segmentation aims to group parts of a given image with similar characteristics together, while
image restoration intends to remove image degradations such as noise, blur or occlusions.
The piecewise constant Mumford-Shah (PCMS) model (nonconvex, a special case of the Mumford-Shah model \cite{MS89})
and the Rudin-Osher-Fatemi (ROF) model (convex, \cite{ROF92}) are two of the most famous variational models in the
research areas of image segmentation and restoration, respectively.
Following earlier works e.g. \cite{MS89,ROF92}, in this paper, we show a linkage between the PCMS and ROF models,
which gives rise to a new image segmentation paradigm:
manipulating image segmentation through image restoration plus thresholding.

Let us first recall the PCMS and ROF models. 
Throughout this paper, let $\Omega \subset \mathbb{R}^2$ be a bounded, open set with Lipschitz boundary,
and $f: \Omega \rightarrow [0,1]$ be a given (degraded) image.  
In 1989 Mumford and Shah  \cite{MS89} proposed solving segmentation problems by minimizing over
$\Gamma \subset \Omega$ and $u \in H^{1}(\Omega\backslash\Gamma)$ the energy functional
\begin{equation} \label{ms}
E_{\rm MS}(u, \Gamma; \Omega)={\cal H}^{1}(\Gamma)+ \lambda^\prime
\int_{\Omega\backslash\Gamma}|\nabla u|^2 dx+ \lambda \int_{\Omega}(u-f)^2 dx, \quad \lambda^\prime, \lambda > 0,
\end{equation}
where ${\cal H}^1$ denotes the one-dimensional Hausdorff measure in $\mathbb R^2$. 
The functional $E_{\rm MS}$ contains three terms:
the penalty term on the length of $\Gamma$, the $H^1$ semi-norm that enforces  the smoothness of $u$ in $\Omega\backslash\Gamma$,
and the data fidelity term controlling the distance of $u$ to the given image $f$.
Related approaches in a spatially discrete setting were proposed in \cite{BZ87,GG84}.
An early attempt to solve the challenging task of finding a minimizer of the non-convex and non-smooth Mumford-Shah 
functional \eqref{ms} was done by approximating it using a sequence of simpler elliptic problems, see \cite{AT90}
for the so-called Ambrosio-Tortorelli approximation.
Many approaches to simplify model \eqref{ms} were meanwhile proposed in the literature, for example, in \cite{PCBC09a}, a
convex relaxation of the model was suggested. 
Another important simplification is to restrict its solution to 
be piecewise constant, which leads to the so-called PCMS model. 

{\bf PCMS model.}
This model is based on the restriction $\nabla u = 0$ on $\Omega\backslash \Gamma$,
which results in 
\begin{equation}  \label{pcms-org}
E_{\rm PCMS}(u, \Gamma; \Omega)={\cal H}^{1}(\Gamma) + \lambda \int_{\Omega}(u-f)^2 dx.
\end{equation}
Assuming that $\Omega=\bigcup_{i=0}^{K-1}\Omega_i$ with pairwise disjoint sets $\Omega_i$ and constant functions $u(x) \equiv m_i$ 
on $\Omega_i$, $i = 0,\ldots K-1$, model \eqref{pcms-org} can be rewritten as
\begin{equation} \label{pcms}
E_{\rm PCMS}({\bf \Omega},{\zb m})=
\frac{1}{2}\sum_{i=0}^{K-1}{\rm Per}(\Omega_i; \Omega)
+ \lambda \sum_{i=0}^{K-1}\int_{\Omega_i}(m_i-f)^2dx,
\end{equation}
where ${\bf \Omega} := \{\Omega_i\}_{i=0}^{K-1}$, ${\zb m} := \{m_i\}_{i=0}^{K-1}$, and 
${\rm Per}(\Omega_i;\Omega)$ denotes the perimeter of $\Omega_i$ in $\Omega$.
If the number of phases is two, i.e. $K=2$, the PCMS model is the model of the active contours without edges
(Chan-Vese model) \cite{CV01-2}, 
\begin{equation}\label{chan-vese}
E_{\rm CV}(\Omega_1,m_0,m_1) = {\rm Per}(\Omega_1;\Omega) +
\lambda \Big(\int_{\Omega_1}
(m_1-f)^2 \, dx + \int_{\Omega\backslash\Omega_1} (m_0-f)^2 \, dx\Big).
\end{equation}
In \cite{CV01-2} the authors proposed to solve \eqref{chan-vese}, where
it can easily get stuck in local minima. To overcome this drawback, a convex relaxation
approach was proposed in \cite{CEN06}.
More precisely, it was shown that a global minimizer
of $E_{\rm CV}(\cdot, m_0,m_1)$ for fixed $m_0, m_1$ can be found by solving
\begin{equation}\label{chan-vese-convex}
\bar{u} = \underset{u\in BV(\Omega)}{\rm argmin} \Big\{TV(u) + \lambda \int_{\Omega} \big( (m_0-f)^2 - (m_1-f)^2 \big ) u\, dx \Big\},
\end{equation}
and setting $\Omega_1 := \{x \in \Omega: \bar{u}(x) > \rho\}$ for any choice of $\rho \in [0, 1)$,
see also \cite{BPV91,BEVTO07}. Note that the first term of \eqref{chan-vese-convex} is known as the total variation ($TV$) 
and the space $BV$ is the space of functions of bounded variation, see Section 2 for the definition.
In other words, \eqref{chan-vese-convex} is a tight relaxation of the Chan-Vese model with fixed $m_0$ and $m_1$.
For the convex formulation of the full model \eqref{chan-vese}, see \cite{BCB12}.

There are many other approaches for two-phase image segmentation based on the Chan-Vese model and its convex version,
see e.g. \cite{ZMSM08,BEVTO07,DCS10,BCPSS17}. 
In particular, a  hybrid level set method was proposed
in \cite{ZMSM08}, which replaces the first term of (\ref{chan-vese}) by a boundary feature map and the data fidelity terms
in (\ref{chan-vese}) by the difference between the given image $f$ and a fixed threshold chosen by a
user or a specialist. Method \cite{ZMSM08} was used in medical image segmentation.
However, since every time it needs the user to choose a proper threshold for its model, 
it is not automatic and thus its applications are restricted. 
In \cite{BEVTO07}, the $TV$ term
of (\ref{chan-vese-convex}) was replaced by a weighted $TV$ term which
helps the new model to capture much more important geometric properties.
In \cite{DCS10}, the $TV$ term of \eqref{chan-vese-convex} was replaced by
a wavelet frame decomposition operator which, similar to the model in
\cite{BEVTO07},  can also capture important geometric
properties. 
Nevertheless, for its solution $u$, no similar conclusions as the ones in \cite{CEN06} can be 
addressed; that is, there is no theory to support that its segmentation result $\Omega_1  = \{x: u(x) > \rho\}$ for $\rho \in [0, 1)$ 
is a solution as to some kind of objective functional. In \cite{BCPSS17}, the Chan-Vese model 
was extended for 3D biopores segmentation in tomographic images. 

In \cite{VC02}, Chan and Vese proposed a multiphase segmentation model based on the
PCMS model using level sets. However, this method can also  get stuck easily in local minima.
Convex (non-tight) relaxation approaches for the PCMS model were proposed, which are
basically focusing on solving
\begin{equation}
\min_{m_i, u_i\in [0,1]} \Big\{
\sum_{i=0}^{K-1}\int_{\Omega} |\nabla u_i|dx + \lambda \sum_{i=0}^{K-1}\int_{\Omega}(m_i-f)^2 u_idx \Big\},
\quad {\rm s.t.} \ \ \sum_{i=0}^{K-1} u_i =1.
\label{pcms-convex}
\end{equation}
For more details along this line, refer e.g. to \cite{BCCJKM11,C15,CFNSS15,LS12,LNZS10,PCCB09,YBTB10,ZGFN08} and the references therein.
We are interested in a relation between the PCMS model and the  ROF model for 
 image restoration which we introduce next.

{\bf ROF model.}
In 1992, Rudin, Osher and Fatemi \cite{ROF92} proposed the  variational model
\begin{equation}\label{rof}
\min_{u\in BV(\Omega)} \Big\{TV(u) + \frac{\mu}{2} \int_{\Omega} \big(u-f)^2dx \Big\}, \quad \mu >0.
\end{equation}
which has been studied extensively
in the literature, see e.g. \cite{Ch05,CNCP10,CEPY06} and references therein.

Actually, a subtle connection between image segmentation and image restoration has been raised in \cite{CCZ13}.
In detail,  a two-stage image segmentation method is proposed -- smoothing and thresholding (SaT) method --
which finds the solution of a convex variant of the Mumford-Shah model  in the first stage followed by a thresholding step in the second one.
The convex minimization functional in the first stage (the smoothing stage)
is the ROF functional \eqref{rof} plus an additional smoothing term $\int_\Omega | \nabla u|^2 \, dx$.
The SaT method is very efficient and flexible: it performs excellently for degraded images
(e.g. noisy and blurry images and images with information loss);
the minimizer from the first stage is unique; and one can change the number of phases $K$ without solving
the minimization functional again. The success of the SaT method indicates a new methodology 
for image segmentation: first smoothing and then thresholding. This approach was extended in \cite{CYZ13} 
for images corrupted by Poisson and Gamma noises, and in \cite{CCNZ15} to degraded color images.

{\bf Our contribution.}
In this paper we highlight a relationship between the PCMS model \eqref{pcms} and the ROF model \eqref{rof}.
We prove that thresholding the minimizer of the ROF model leads to a partial minimizer ({\it cf.} definition \eqref{partial_min}) 
of the PCMS model when $K=2$
(Chan-Vese model \eqref{chan-vese}), which remains true under specific assumptions when $K>2$.
This linkage between the PCMS model and the ROF model validates the effectiveness of our proposed SaT method
in \cite{CCZ13} for image segmentation. 
Due to the significance of the PCMS model and ROF model, respectively in image
segmentation and image restoration, this linkage  bridges  to some extent these two research areas
and  might serve as a motivation to improve and design better methods.
A direct  benefit is a new efficient segmentation method
-- {\it thresholded-ROF} (T-ROF) method -- proposed in this paper. 
The T-ROF method exactly follows the new paradigm to perform image segmentation through image restoration 
plus iterative thresholding, where these thresholds are selected automatically following certain rules.
This appears to be more sophisticated
than the SaT method \cite{CCZ13} which is based on $K$-means.
We emphasize that we just need to
solve the ROF model once, and our method gives optimal segmentation results akin to the PCMS model.
We prove the convergence of our T-ROF method regarding thresholds automatic selection.

On the one hand, the T-ROF method can be regarded as a special case of our proposed SaT method. However, it is directly obtained
from the linkage between the PCMS model and the ROF model discovered in this paper and thus is more theoretically justified.
Moreover, the strategy of choosing the thresholds automatically and optimally in the T-ROF method is not covered in
the SaT method in \cite{CCZ13}. The strategy makes our T-ROF method more effective particularly for degraded images whose phases
have close intensities. On the other hand, the T-ROF method inherits the advantages of the SaT method -- fast speed
and computational cost independent of the required number of phases $K$. In contrast, methods solving the 
PCMS model become computational demanding as the required number of phases increases.
Numerical experiments and detailed comparisons to the start-of-the-art methods
are presented to demonstrate the great performance of the proposed T-ROF method.
Partial results of this paper have been  presented in the conference paper \cite{CS13}. 

The paper is organized as follows. In Section \ref{sec:notation}, we introduce the basic notation.
The linkage between the PCMS and ROF models is presented in Section \ref{sec:link}. 
In Section \ref{sec:segmed},
we present our T-ROF model and provide an algorithm to solve it together with its
convergence analysis. 
In Section \ref{sec:numerics},
we demonstrate the performance of our T-ROF method on various synthetic and real-world images 
and compare it with 
the representative related segmentation methods \cite{LNZS10,PCCB09,YBTB10,HHMSS12,CCZ13}.
Conclusions are given in Section \ref{sec:conclusions}.

\section{Basic Notation}\label{sec:notation}
We briefly introduce the basic notation which will be used in the followings, see \cite{AFP00,ABM06} for more details.
By $BV(\Omega)$ we denote the space of functions of bounded variation defined on $\Omega$,  i.e.,
the Banach space of functions $u:\Omega \rightarrow [0,1]$
with finite norm $\|u\|_{BV} := \|u\|_{L^1(\Omega)} + TV(u)$, where
$$
TV(u) := \sup \big \{ \int_{\Omega} u(x)  {\rm div} \varphi \, dx: 
\varphi \in {\cal C}_c^1(\Omega,\mathbb R^2), \| \varphi \|_\infty \le 1 \big\}.
$$
The distributional first order derivative $Du$ of $u$
is a vector-valued Radon measure having total mass 
$|Du| (\Omega) = \int_{\Omega} |Du| dx = TV(u)$. In particular, we have for $u \in W^{1,1}(\Omega)$ that
$Du = \nabla u \in L^1(\Omega)$ so that in this case
$
TV(u) = \int_{\Omega} |\nabla u| \, dx.
$

 A ``set" is understood as a Lebesgue measurable
set in $\mathbb R^2$, where we mainly
consider equivalence classes of sets which are equal up to Lebesgue measure zero.
By $|A|$ we denote the Lebesgue measure of a set $A$.
For a Lebesgue measurable set $A \subset \Omega$, the {\it perimeter} of $A$ in $\Omega$ is
defined by
$$
{\rm Per}( A;\Omega) := TV(\chi_A),
$$
where $\chi_A$ is the characteristic function of $A$. Hence $A$ is of finite perimeter
if its characteristic function has finite bounded total variation. If $A$ has a ${\cal C}^1$ boundary,
then ${\rm Per}( A;\Omega)$ coincides with ${\cal H}^1(\partial A \cap \Omega)$.
For $A,B \subseteq \Omega$ the relation
\begin{gather} \label{per_prop_1}
{\rm Per}(A \cup B;\Omega) + {\rm Per}(A \cap B;\Omega) \le {\rm Per}(A;\Omega) + {\rm Per}(B;\Omega)
\end{gather}
holds true. 
We will also use the notation ${\rm Per}( A; \hat{\Omega})$ for non open set $\hat{\Omega}$, 
in the sense
$
{\rm Per}( A; \hat{\Omega}) = {\rm Per}( A; {\rm int}(\hat{\Omega})) 
$,
where ${\rm int}(\hat{\Omega})$ denotes the interior of $\hat{\Omega}$. 
We define the mean of $f$ on $A \subset \mathbb R^2$ by
\begin{gather*} \label{mean}
{\rm mean}_f (A) :=
\left\{
\begin{array}{ll}
\frac{1}{|A|} \int_A f \, dx, & \ {\rm if} \; |A| > 0,\\
0, &\ {\rm otherwise}.
\end{array}
\right.
\end{gather*}

\section{Linkages}\label{sec:link}
We first propose our  T-ROF model, and then use it to
derive a linkage between the PCMS and ROF models using the T-ROF model.

\subsection{Thresholded-ROF (T-ROF) Model}\label{sec:c_model}
To motivate our T-ROF model, we start by considering 
for fixed $\tau \in (0,1)$\footnote{Note that $E(\emptyset,\tau) = 0$
and $E(\Omega,\tau) = \mu \int_{\Omega} (\tau - f) \, dx$.
Since $f$ maps into $[0,1]$, the global minimizer of $E(\cdot, \tau)$
for fixed $\tau \le 0$ is $\Omega$ and for fixed $\tau \ge 1$ is $\emptyset$.
Therefore we restrict our attention to $\tau \in (0,1)$.}, 
the minimization of
\begin{equation}\label{E-single}
E(\Sigma, \tau) := {\rm Per}(\Sigma;\Omega) + \mu \int_{\Sigma} (\tau - f) \, dx.
\end{equation}
The following proposition gives a way to solve it.

\begin{proposition} \label{thm-glob-minimizer}
For any fixed $\tau \in (0,1)$, the minimizer $\Sigma_{\tau}$ of $E(\cdot,\tau)$ in \eqref{E-single}
can be found by solving the convex minimization problem
\begin{equation}\label{E-c}
\bar{u} = \underset{u\in BV(\Omega)}{\rm argmin} \Big\{ TV(u) + \mu \int_{\Omega} (\tau-f) \, u \, dx \Big\}
\end{equation}
and then setting
$\Sigma_{\tau} = \{ x \in \Omega: \bar{u}(x) > \rho \}$ for any $\rho\in [0,1)$.
\end{proposition}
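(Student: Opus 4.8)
The plan is to exploit the well-known coarea/layer-cake structure of $TV$ together with the linearity of the fidelity term $\mu\int_\Omega(\tau-f)u\,dx$. Concretely, write $g_\tau(x):=\mu(\tau-f(x))$, so that the objective in \eqref{E-c} is $F(u):=TV(u)+\int_\Omega g_\tau\,u\,dx$, and the objective in \eqref{E-single} is $E(\Sigma,\tau)=F(\chi_\Sigma)$, since $TV(\chi_\Sigma)={\rm Per}(\Sigma;\Omega)$ and $\int_\Omega g_\tau\chi_\Sigma=\int_\Sigma g_\tau$. The existence of a minimizer $\bar u$ of \eqref{E-c} in $BV(\Omega)$ follows from the direct method (coercivity on $BV$ thanks to the constraint $u:\Omega\to[0,1]$, and lower semicontinuity of $TV$ plus continuity of the linear term under $L^1$-convergence); I would state this briefly and move on. Without loss of generality one may take $\bar u$ with values in $[0,1]$, since truncating to $[0,1]$ does not increase $TV$ and, because $f$ maps into $[0,1]$ (so the sign considerations in the footnote apply), does not increase the fidelity term either.

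The heart of the argument is the coarea formula for $BV$ functions, $TV(\bar u)=\int_0^1{\rm Per}(\{\bar u>\rho\};\Omega)\,d\rho$, combined with the Cavalieri/layer-cake identity $\bar u(x)=\int_0^1\chi_{\{\bar u>\rho\}}(x)\,d\rho$ (valid for $\bar u$ with values in $[0,1]$), whence $\int_\Omega g_\tau\,\bar u\,dx=\int_0^1\big(\int_{\{\bar u>\rho\}}g_\tau\,dx\big)d\rho$. Adding these gives the superlevel-set decomposition
\begin{equation*}
F(\bar u)=\int_0^1 E(\{\bar u>\rho\},\tau)\,d\rho .
\end{equation*}
Now let $\Sigma^\star$ be a minimizer of $E(\cdot,\tau)$ (again existence by the direct method applied to characteristic functions). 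For a.e.\ $\rho\in(0,1)$ the set $\{\bar u>\rho\}$ is a competitor, so $E(\{\bar u>\rho\},\tau)\ge E(\Sigma^\star,\tau)$, and integrating over $\rho\in(0,1)$ yields $F(\bar u)\ge E(\Sigma^\star,\tau)$. Conversely $\chi_{\Sigma^\star}\in BV(\Omega)$ is a competitor in \eqref{E-c}, so $F(\bar u)\le F(\chi_{\Sigma^\star})=E(\Sigma^\star,\tau)$. Hence $F(\bar u)=E(\Sigma^\star,\tau)$ and, feeding this equality back into the decomposition, $\int_0^1\big(E(\{\bar u>\rho\},\tau)-E(\Sigma^\star,\tau)\big)d\rho=0$ with a nonnegative integrand, so $E(\{\bar u>\rho\},\tau)=E(\Sigma^\star,\tau)$ for a.e.\ $\rho\in(0,1)$; that is, $\{\bar u>\rho\}$ is a minimizer of $E(\cdot,\tau)$ for a.e.\ $\rho$.

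It remains to upgrade ``a.e.\ $\rho$'' to ``every $\rho\in[0,1)$'', which is the step I expect to require the most care. The idea is a monotonicity/continuity argument on the family $\rho\mapsto\{\bar u>\rho\}$: this family is nonincreasing in $\rho$, and $\{\bar u>\rho\}=\bigcup_{\rho'>\rho}\{\bar u>\rho'\}$, so $\chi_{\{\bar u>\rho\}}\to\chi_{\{\bar u>\rho_0\}}$ in $L^1$ as $\rho\downarrow\rho_0$; combined with lower semicontinuity of the perimeter and continuity of the (bounded) linear fidelity term under such $L^1$-convergence, $E(\{\bar u>\rho_0\},\tau)\le\liminf_{\rho\downarrow\rho_0}E(\{\bar u>\rho\},\tau)$. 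Choosing a sequence $\rho_n\downarrow\rho_0$ along the full-measure set of ``good'' $\rho$'s gives $E(\{\bar u>\rho_0\},\tau)\le E(\Sigma^\star,\tau)=\min$, hence equality, for every $\rho_0\in[0,1)$. (An entirely analogous argument from the right, or invoking \eqref{per_prop_1} to compare level sets, handles any remaining edge cases.) This establishes that $\Sigma_\tau:=\{\bar u>\rho\}$ solves \eqref{E-single} for every $\rho\in[0,1)$, completing the proof.
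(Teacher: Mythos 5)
Your proposal is correct and follows essentially the same route as the proof the paper invokes by citation (Proposition 2.1 of \cite{CNCP10}, built on \cite{BPV91,NEC06}): the coarea formula plus the layer-cake identity give $F(\bar u)=\int_0^1 E(\{\bar u>\rho\},\tau)\,d\rho$, comparison with $\chi_{\Sigma^\star}$ yields minimality of $\{\bar u>\rho\}$ for a.e.\ $\rho$, and the monotone $L^1$-convergence of superlevel sets together with lower semicontinuity of the perimeter upgrades this to every $\rho\in[0,1)$. The only blemish is the ``truncate to $[0,1]$'' aside: it is superfluous here because the paper's $BV(\Omega)$ is by definition $[0,1]$-valued (and without that constraint the linear term in \eqref{E-c} is unbounded below, so the pointwise sign argument you sketch would not justify the truncation); the rest of the argument stands as written.
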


For a proof of the proposition, we refer to Proposition 2.1 in the review paper \cite{CNCP10}, where
the proof uses the same ideas as in \cite{BPV91,NEC06}.
The functional \eqref{E-c} is convex and it is well known
that there exists a global minimizer. 
Hence the proposition ensures the existence
of a global minimizer of \eqref{E-single}. 
Moreover, based on 
the following Lemma from \cite[Lemma 4i)]{ACC05} and
a smoothness argument, an explanation that the minimizing
set $\Sigma_{\tau}$ is unique was given in \cite{CNCP10}.

\begin{lemma} \label{tau-sigma-ordering}
For fixed $0 < \tau_1 < \tau_2 < 1$,
let $\Sigma_i = \Sigma_{\tau_i}$ 
be minimizers of \eqref{E-single}, $i=1,2$.
Then
$|\Sigma_{2} \backslash \Sigma_{1}| = 0$
is fulfilled, i.e.,
$\Sigma_{1} \supseteq \Sigma_{2}$ up to a measure zero set.
\end{lemma}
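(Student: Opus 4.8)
The plan is to exploit the minimality of $\Sigma_1$ and $\Sigma_2$ for the two functionals $E(\cdot,\tau_1)$ and $E(\cdot,\tau_2)$ respectively, and combine the two optimality inequalities using the submodularity of the perimeter. First I would write down the two inequalities coming from comparing each minimizer with a competitor built from $\Sigma_1$ and $\Sigma_2$: testing $E(\cdot,\tau_1)$ at $\Sigma_1 \cup \Sigma_2$ gives
\begin{equation*}
{\rm Per}(\Sigma_1;\Omega) + \mu \int_{\Sigma_1} (\tau_1 - f)\,dx \le {\rm Per}(\Sigma_1 \cup \Sigma_2;\Omega) + \mu \int_{\Sigma_1 \cup \Sigma_2} (\tau_1 - f)\,dx,
\end{equation*}
and testing $E(\cdot,\tau_2)$ at $\Sigma_1 \cap \Sigma_2$ gives
\begin{equation*}
{\rm Per}(\Sigma_2;\Omega) + \mu \int_{\Sigma_2} (\tau_2 - f)\,dx \le {\rm Per}(\Sigma_1 \cap \Sigma_2;\Omega) + \mu \int_{\Sigma_1 \cap \Sigma_2} (\tau_2 - f)\,dx.
\end{equation*}

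Next I would add these two inequalities and apply the submodularity relation \eqref{per_prop_1}, which makes the four perimeter terms cancel (the right-hand side perimeters are bounded by ${\rm Per}(\Sigma_1;\Omega) + {\rm Per}(\Sigma_2;\Omega)$, exactly the left-hand side perimeters). What remains is a purely measure-theoretic inequality among the integrals. Using the set identities $\Sigma_1 \cup \Sigma_2 = \Sigma_1 \sqcup (\Sigma_2 \setminus \Sigma_1)$ and $\Sigma_1 \cap \Sigma_2 = \Sigma_2 \setminus (\Sigma_2 \setminus \Sigma_1)$, the integrals over $\Sigma_1$ and $\Sigma_2$ also cancel, leaving
\begin{equation*}
0 \le \mu \int_{\Sigma_2 \setminus \Sigma_1} (\tau_1 - f)\,dx - \mu \int_{\Sigma_2 \setminus \Sigma_1} (\tau_2 - f)\,dx = \mu (\tau_1 - \tau_2)\,|\Sigma_2 \setminus \Sigma_1|.
\end{equation*}
Since $\mu > 0$ and $\tau_1 - \tau_2 < 0$, this forces $|\Sigma_2 \setminus \Sigma_1| = 0$, which is the claim.

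The only real subtlety — and the step I would be most careful about — is the bookkeeping when $\Sigma_1$ and $\Sigma_2$ are only defined up to Lebesgue-null sets and $E$ is minimized over equivalence classes: one must check that $\Sigma_1 \cup \Sigma_2$ and $\Sigma_1 \cap \Sigma_2$ are admissible competitors (i.e. still of finite perimeter), which is guaranteed precisely by \eqref{per_prop_1} giving finiteness of their perimeters, and that the additive splitting of the integrals is valid up to null sets. Everything else is a routine manipulation of the two inequalities. Note this argument does not even use Proposition~\ref{thm-glob-minimizer} or the thresholding characterization; it uses only that $\Sigma_i$ minimizes $E(\cdot,\tau_i)$ together with the submodularity \eqref{per_prop_1}, so it applies to any pair of such minimizers.
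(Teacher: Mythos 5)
Your argument is correct: the two optimality inequalities tested at $\Sigma_1 \cup \Sigma_2$ and $\Sigma_1 \cap \Sigma_2$, combined with the submodularity \eqref{per_prop_1}, do reduce to $0 \le \mu(\tau_1-\tau_2)\,|\Sigma_2 \setminus \Sigma_1|$, which forces $|\Sigma_2 \setminus \Sigma_1| = 0$. The paper does not prove this lemma itself but imports it from \cite[Lemma 4i)]{ACC05} (see also \cite{CNCP10}), and your comparison-plus-submodularity argument is precisely the standard proof given in those references, so there is nothing to add.
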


The following proposition gives another way of solving \eqref{E-single} via the ROF function.
For the proof see \cite[Proposition 2.6]{CNCP10}.

\begin{proposition} \label{pro-rof-trof}
The set $ \Sigma_{\tau} := \{x \in \Omega : u(x) > \tau\}$ solves $E(\cdot;\tau)$ in \eqref{E-single} for every $\tau \in (0,1)$
if and only if
the function $u \in BV(\Omega)$ solves the ROF model \eqref{rof}, i.e.,
\begin{equation}
\min_{u\in BV(\Omega)} \Big\{TV(u) + \frac{\mu}{2} \int_{\Omega} \big(u-f)^2dx \Big\}, \quad \mu >0.
\end{equation}
\end{proposition}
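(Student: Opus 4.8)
This is the classical \emph{coarea / layer-cake} argument (carried out in \cite[Proposition~2.6]{CNCP10}). The plan is to rewrite the ROF energy as a superposition, over the threshold levels $t\in(0,1)$, of the geometric energies $E(\cdot,t)$ from \eqref{E-single}, and then to read off both implications from this single identity. Since every $u\in BV(\Omega)$ takes values in $[0,1]$, the coarea formula gives $TV(u)=\int_0^1{\rm Per}(\{u>t\};\Omega)\,dt$, the levels $t<0$ and $t\ge1$ contributing nothing because ${\rm Per}(\Omega;\Omega)={\rm Per}(\emptyset;\Omega)=0$. For the data term I would use the elementary pointwise identity $\tfrac12(a-b)^2=\tfrac12 b^2+\int_0^1\chi_{\{a>t\}}\,(t-b)\,dt$, valid for $a,b\in[0,1]$, apply it with $a=u(x)$, $b=f(x)$, integrate over $\Omega$, and interchange the order of integration (Fubini applies since the integrand is bounded and $|\Omega|<\infty$). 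Adding the two contributions yields
\begin{equation}\label{rof-layercake}
TV(u)+\frac{\mu}{2}\int_\Omega(u-f)^2\,dx=\frac{\mu}{2}\int_\Omega f^2\,dx+\int_0^1 E\big(\{u>t\},t\big)\,dt .
\end{equation}

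Granting \eqref{rof-layercake}, the ``only if'' direction is immediate: if $\Sigma_\tau=\{u>\tau\}$ minimizes $E(\cdot,\tau)$ for every $\tau\in(0,1)$, then for any competitor $v\in BV(\Omega)$, \eqref{rof-layercake} applied to $v$ together with $E(\{v>t\},t)\ge E(\{u>t\},t)$ for a.e.\ $t$ shows that $u$ attains the minimum of the ROF energy. For the ``if'' direction, let $\{\Sigma_t\}_{t\in(0,1)}$ be the (unique) minimizers of $E(\cdot,t)$; by Lemma~\ref{tau-sigma-ordering} they are nested, $\Sigma_{t_1}\supseteq\Sigma_{t_2}$ for $t_1<t_2$. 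Set $\tilde u(x):=\sup\{t\in(0,1):x\in\Sigma_t\}$. One checks that $\tilde u$ takes values in $[0,1]$, that $\tilde u\in BV(\Omega)$ (using ${\rm Per}(\Sigma_t;\Omega)\le\mu|\Omega|$, which follows from $E(\Sigma_t,t)\le E(\emptyset,t)=0$, and the coarea formula), and that $\{\tilde u>t\}=\Sigma_t$ for all but at most countably many $t$. Then \eqref{rof-layercake} forces $\tilde u$ to be a ROF minimizer, so $\tilde u$ coincides with the given ROF solution $u$ by uniqueness of the ROF minimizer; consequently $\{u>t\}=\Sigma_t$ minimizes $E(\cdot,t)$ for a.e.\ $t\in(0,1)$.

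The remaining, and most delicate, step is to upgrade ``a.e.\ $t$'' to ``every $\tau$''. Fixing $\tau\in(0,1)$ and choosing $t_n\downarrow\tau$ along which $\{u>t_n\}$ is optimal, the increasing union $\{u>\tau\}=\bigcup_{t>\tau}\{u>t\}$ gives $\chi_{\{u>t_n\}}\to\chi_{\{u>\tau\}}$ in $L^1(\Omega)$; lower semicontinuity of the perimeter and dominated convergence yield $E(\{u>\tau\},\tau)\le\liminf_n E(\{u>t_n\},t_n)$, while $t\mapsto\min_\Sigma E(\Sigma,t)$ is Lipschitz (each $E(\Sigma,\cdot)$ being affine with slope $\mu|\Sigma|\in[0,\mu|\Omega|]$), so $E(\{u>t_n\},t_n)=\min_\Sigma E(\Sigma,t_n)\to\min_\Sigma E(\Sigma,\tau)$; combining the two shows $\{u>\tau\}$ is optimal for $E(\cdot,\tau)$. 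I expect the bookkeeping for this passage — together with the verification that $\tilde u$ indeed lies in $BV(\Omega)$ with the prescribed super-level sets — to be where essentially all the care is needed; the monotonicity of Lemma~\ref{tau-sigma-ordering}, the uniqueness of the minimizing set $\Sigma_\tau$, and the strict convexity (hence uniqueness) of the ROF minimizer are the ingredients that make it go through.
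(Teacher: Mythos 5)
Your proposal is correct; note that the paper itself gives no proof of Proposition \ref{pro-rof-trof}, deferring entirely to \cite[Proposition 2.6]{CNCP10}, and the coarea/layer-cake superposition you carry out (with the monotone-limit argument upgrading optimality from a.e.\ $t$ to every $\tau$, using nestedness, lower semicontinuity of the perimeter and Lipschitz continuity of the value function) is exactly the standard argument of that cited source. The technical points you flag (membership of $\tilde u$ in $BV(\Omega)$, uniqueness of the ROF minimizer by strict convexity, and handling sets up to Lebesgue-null modifications) are indeed the only places requiring care, and your treatment of them is sound.
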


Using Lemma \ref{tau-sigma-ordering}, after minimizing \eqref{E-single}
for $0 < \tau_1 < \tau_2 < \cdots < \tau_{K-1}< 1$, we have for the corresponding sets
\begin{equation} \label{nest-sigma}
\Omega \supseteq \Sigma_{1} \supseteq \Sigma_{2}
\supseteq \cdots \supseteq \Sigma_{K-1} \supseteq \emptyset.
\end{equation}
Setting $\Sigma_{0} := \Omega$ and $ \Sigma_{K} := \emptyset$,
we see that the sets
\begin{equation} \label{nest-omega}
\Omega_i:= \Sigma_{i} \backslash \Sigma_{i+1}, \;  i=0,\ldots,K-1
\end{equation}
are pairwise disjoint and fulfill $\bigcup_{i=0}^{K-1} \Omega_i = \Omega$.

Let ${\bf \Sigma} := \{ \Sigma_i \}_{i =1}^{K-1}$, $\zb \tau := \{ \tau_i \}_{i =1}^{K-1}$ $(\tau_i < \tau_j, i < j)$, and
\begin{gather}\label{functional-E-general}
{\cal E}({\bf \Sigma}, \zb \tau) := \sum_{i=1}^{K-1} \big( {\rm Per}(\Sigma_i;\Omega)
+ \mu \int_{\Sigma_i} (\tau_i - f) \, dx \big), \quad \mu > 0.
\end{gather}
Our  T-ROF ({\it thresholded-ROF}) model aims to find a pair $({\bf \Sigma}^*, {\zb \tau}^*)$ fulfilling the two conditions
\begin{align}  \label{E-general}
{\cal E}({\bf \Sigma}^*, \zb \tau^*) \le {\cal E}({\bf \Sigma}, \zb \tau^*) \quad \mathrm{for\; all} \quad  \zb \Sigma \subset \Omega^{K-1}, 
\end{align}
and
\begin{align}\label{multicond}
 \tau_i^* = \frac{1}{2}(m_{i-1}^* + m_i^*), \quad i=1,\ldots,K-1,
 \end{align}
where
\begin{equation} \label{multicond_1}
 m_i^* := {\rm mean}_f (\Omega_i^*), \quad 
 \Omega_i^*:=\Sigma_i^* \backslash \Sigma_{i+1}^*, \;
\Sigma_0^* := \Omega, \; \Sigma_K^* := \emptyset.
\end{equation}
Since ${\cal E}(\cdot,\cdot)$ in \eqref{functional-E-general} is separable in each $i$ and, for each $i$, it is precisely of the form of  \eqref{E-single}, a minimizer of
${\cal E}(\cdot,\zb \tau^*)$ in \eqref{E-general} for some fixed $\zb \tau^*$ can be found easily by componentwise minimization for each $i$ 
using Proposition \ref{pro-rof-trof}, i.e., thresholding
the minimizer of the ROF model \eqref{rof} with $\zb \tau^*$.
However, finding a pair of $({\bf \Sigma}^*, \zb \tau^*)$ satisfying \eqref{E-general} as well as the condition \eqref{multicond}
is not straightforward. In Section \ref{sec:segmed}, we will propose an efficient way to address the solution of the T-ROF
model \eqref{E-general} that satisfies the condition \eqref{multicond}.
Once we have obtained $({\bf \Sigma}^*, {\zb \tau}^*)$, the desired segmentation for the given image $f$ is then given by $\Omega_i^*$, $i=0,\ldots,K-1$.
Finally, note that finding a pair $({\bf \Sigma}, \zb \tau^*)$ which solves the T-ROF model differs from solving
\begin{align}
 \min_{{\bf \Sigma}, {\zb \tau}} {\cal E}({\bf \Sigma}, \zb \tau), \quad \mbox{subject to} \quad \tau_i = \frac{1}{2}(m_{i-1} + m_i), \quad i=1,\ldots,K-1,
\end{align}
since we do not minimize over all feasible ${\zb \tau}$. For an example see Remark 1 in \cite{CS13}.

\subsection{Linkage of PCMS and ROF Models} \label{subsec:related}
Recall that 
$(\Sigma^*, m^*)$ is a {\it partial minimizer} of some objective function $E$ if
\begin{gather} \label{partial_min}
\begin{cases}
E(\Sigma^*, m^*) \le E(\Sigma^*, m), \quad \mbox{for all feasible} \; m, \\
E(\Sigma^*, m^*) \le E(\Sigma, m^*), \quad \mbox{for all feasible} \; \Sigma.
\end{cases}
\end{gather}
We see immediately that a partial minimizer 
${\bf \Omega}^*= \{\Omega_i^*\}_{i=0}^{K-1}$, 
${\zb m}^* = \{m_i^*\}_{i=0}^{K-1}$
of the PCMS model \eqref{pcms} has to fulfill
\begin{gather} \label{pcms_m}
m_i^* = {\rm mean}_f( \Omega_i^*), \quad i=0,\ldots,K-1.
\end{gather}
We note that if $E$ is differentiable on its domain, then every partial minimizer contained
in the interior of the domain is stationary, see e.g. \cite{GFK07}.
Note also that a partial minimizer must not be a local minimizer and conversely, see Fig. \ref{fig:partial_min}.
\begin{figure}[ht]
\centering
\includegraphics[width=5.5cm]{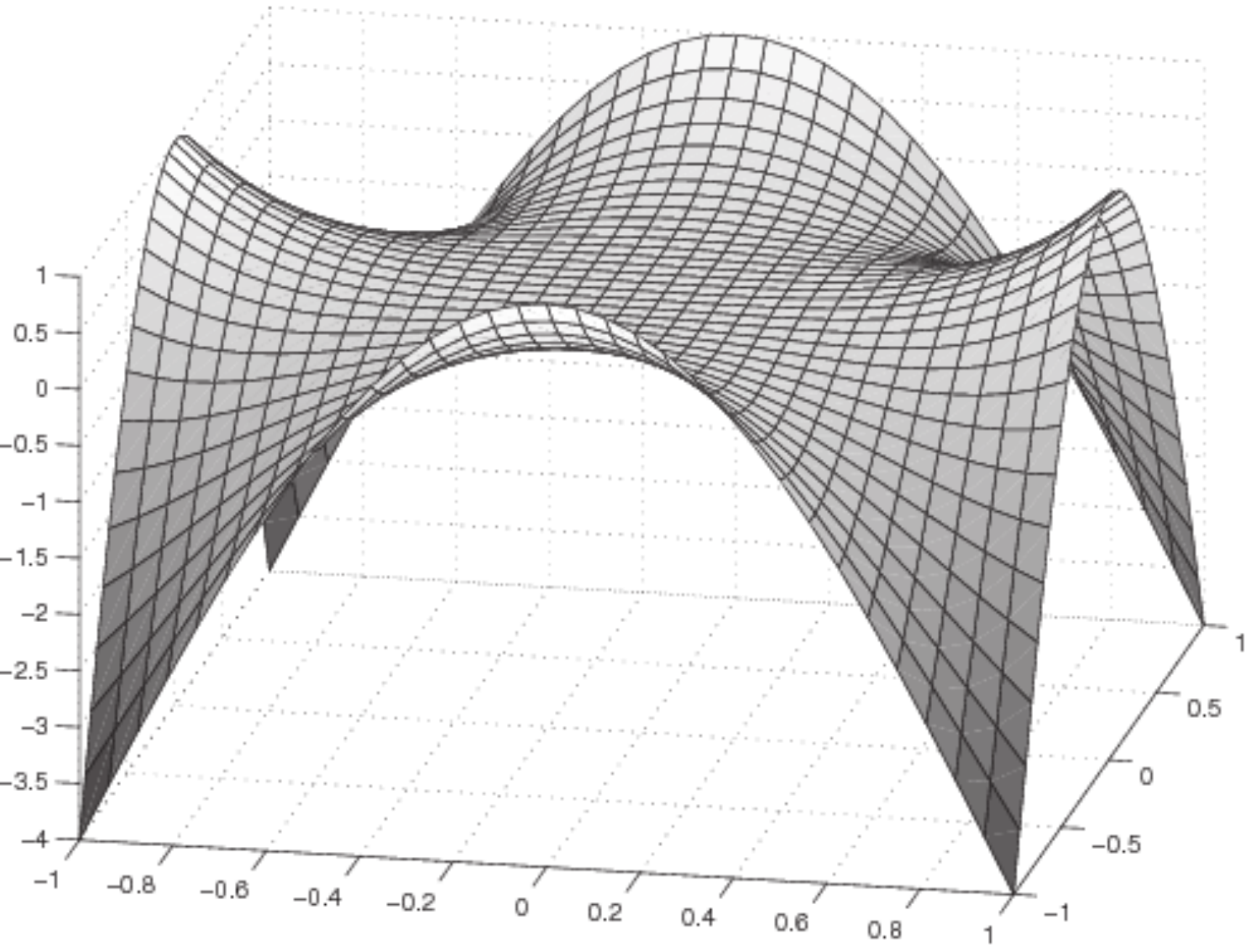}
\caption{\label{fig:partial_min} Function with a partial minimizer $(0, 0)$
which is not a local minimizer (function $(x,y) \mapsto {\rm Re} ( (x+iy)^4 ) = x^4 - 6(xy)^2 + y^4$). }
\end{figure}
%
\medskip

\textbf{Case $K=2$.}
We first give the relationship between our T-ROF model  and the
PCMS model for $K=2$, and then use it to derive
the relationship between the ROF and PCMS models.

\begin{theorem} \label{lem-cv-equi}
{\rm (Relation between T-ROF and PCMS models for $K=2$)}
For $K=2$, let $(\Sigma_1^*,\tau_1^*)$ with  $0 < |\Sigma_1^*| < |\Omega|$ 
be a solution of the T-ROF model \eqref{E-general}--\eqref{multicond}. Then
$\big(\Sigma_1^*,m_0^*,m_1^* \big)$ is a partial minimizer of the PCMS
model \eqref{chan-vese} with the parameter $\lambda := \frac{\mu}{2(m_1^* - m_0^*)}$.
\end{theorem}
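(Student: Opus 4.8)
The plan is to reduce the $K=2$ T-ROF model to the single-threshold functional $E(\cdot,\tau)$ of \eqref{E-single}, and then to observe that $E_{\rm CV}(\cdot,m_0^*,m_1^*)$ and $E(\cdot,\tau_1^*)$ coincide up to an additive constant once $\lambda$ is chosen as in the statement; the two inequalities in the partial-minimizer definition \eqref{partial_min} then fall out of the two defining conditions of the T-ROF model. First I would unwind the notation: for $K=2$ the sum in \eqref{functional-E-general} has a single term, so ${\cal E}(\Sigma_1,\tau_1)=E(\Sigma_1,\tau_1)$; the conventions $\Sigma_0^*=\Omega$, $\Sigma_2^*=\emptyset$ give $\Omega_1^*=\Sigma_1^*$ and $\Omega_0^*=\Omega\backslash\Sigma_1^*$; and \eqref{multicond}--\eqref{multicond_1} say exactly that $\tau_1^*=\tfrac12(m_0^*+m_1^*)$ with $m_1^*={\rm mean}_f(\Sigma_1^*)$ and $m_0^*={\rm mean}_f(\Omega\backslash\Sigma_1^*)$, both meaningful since $0<|\Sigma_1^*|<|\Omega|$.

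The key computation is the pointwise identity $(m_1-f)^2-(m_0-f)^2=2(m_1-m_0)\big(\tfrac{m_0+m_1}{2}-f\big)$. Writing the Chan--Vese fidelity in \eqref{chan-vese} as $\int_{\Omega}(m_0-f)^2\,dx+\int_{\Omega_1}\big[(m_1-f)^2-(m_0-f)^2\big]\,dx$, inserting $m_0=m_0^*$, $m_1=m_1^*$, using $\tfrac12(m_0^*+m_1^*)=\tau_1^*$, and choosing $\lambda=\frac{\mu}{2(m_1^*-m_0^*)}$ so that $2\lambda(m_1^*-m_0^*)=\mu$, I obtain for every measurable $\Omega_1\subset\Omega$
\[
E_{\rm CV}(\Omega_1,m_0^*,m_1^*)={\rm Per}(\Omega_1;\Omega)+\mu\int_{\Omega_1}(\tau_1^*-f)\,dx+C=E(\Omega_1,\tau_1^*)+C ,
\]
with $C:=\lambda\int_{\Omega}(m_0^*-f)^2\,dx$ independent of $\Omega_1$ (and if $\Omega_1$ has infinite perimeter both sides are $+\infty$, so the identity is harmless there). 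Condition \eqref{E-general} says $\Sigma_1^*$ globally minimizes ${\cal E}(\cdot,\tau_1^*)=E(\cdot,\tau_1^*)$ over all $\Omega_1\subset\Omega$; adding the constant $C$ gives $E_{\rm CV}(\Sigma_1^*,m_0^*,m_1^*)\le E_{\rm CV}(\Omega_1,m_0^*,m_1^*)$ for all such $\Omega_1$, which is the second inequality in \eqref{partial_min}.

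For the first inequality in \eqref{partial_min} I would fix $\Omega_1=\Sigma_1^*$ and minimize $E_{\rm CV}(\Sigma_1^*,m_0,m_1)={\rm Per}(\Sigma_1^*;\Omega)+\lambda\big(\int_{\Sigma_1^*}(m_1-f)^2\,dx+\int_{\Omega\backslash\Sigma_1^*}(m_0-f)^2\,dx\big)$ over $(m_0,m_1)$. The two integrals decouple into one-dimensional quadratics; since $|\Sigma_1^*|>0$ and $|\Omega\backslash\Sigma_1^*|>0$, each is strictly convex with unique minimizer the corresponding average, namely ${\rm mean}_f(\Sigma_1^*)$ and ${\rm mean}_f(\Omega\backslash\Sigma_1^*)$, which by \eqref{multicond}--\eqref{multicond_1} are precisely $m_1^*$ and $m_0^*$. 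Hence $E_{\rm CV}(\Sigma_1^*,m_0^*,m_1^*)\le E_{\rm CV}(\Sigma_1^*,m_0,m_1)$ for all feasible $m_0,m_1$, and combining the two inequalities completes the argument.

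I do not expect a serious obstacle here; the one delicate point is the well-posedness of $\lambda=\frac{\mu}{2(m_1^*-m_0^*)}$, which needs $m_1^*\neq m_0^*$ (and, for $\lambda>0$, $m_1^*>m_0^*$, a matter of which of the two phases is labelled $\Sigma_1^*$). This is exactly where $0<|\Sigma_1^*|<|\Omega|$ is used: if $m_0^*=m_1^*$ then $\tau_1^*={\rm mean}_f(\Sigma_1^*)$ forces $\int_{\Sigma_1^*}(\tau_1^*-f)\,dx=0$, so testing \eqref{E-general} against the competitor $\emptyset$ gives ${\rm Per}(\Sigma_1^*;\Omega)\le 0$, which contradicts the non-triviality assumption $0<|\Sigma_1^*|<|\Omega|$; this degenerate case is therefore implicitly excluded by the statement. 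Everything else is the elementary algebra above together with the least-squares characterization of the mean.
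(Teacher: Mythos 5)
Your proposal follows essentially the same route as the paper: the algebraic identity $(m_1-f)^2-(m_0-f)^2=2(m_1-m_0)\bigl(\tfrac{m_0+m_1}{2}-f\bigr)$ turning $E_{\rm CV}(\cdot,m_0^*,m_1^*)$ into ${\cal E}(\cdot,\tau_1^*)+C$ is exactly the paper's computation \eqref{eqt3.5}, the set-inequality of \eqref{partial_min} comes from \eqref{E-general}, and the $m$-inequality from the least-squares characterization of the mean, i.e.\ \eqref{pcms_m}.

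The one soft spot is your treatment of the sign of $m_1^*-m_0^*$. You correctly flag that $\lambda>0$ requires $m_1^*>m_0^*$, but dismissing this as ``a matter of which of the two phases is labelled $\Sigma_1^*$'' is not right: in the T-ROF model the labelling is fixed ($m_1^*={\rm mean}_f(\Sigma_1^*)$, $m_0^*={\rm mean}_f(\Omega\backslash\Sigma_1^*)$), so the ordering must be \emph{proved}, and it is needed in your own argument — if $\lambda<0$ the two quadratics in $(m_0,m_1)$ are concave and the means are maximizers, so the first inequality of \eqref{partial_min} would fail. The paper spends its opening paragraph on precisely this: testing \eqref{E-general} against $\emptyset$ gives $\tau_1^*<m_1^*$ (using ${\rm Per}(\Sigma_1^*;\Omega)>0$), and testing against $\Omega$ gives $m_0^*\le\tau_1^*$, hence $m_0^*<m_1^*$. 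Your exclusion of the equality case $m_0^*=m_1^*$ is fine (and of the same flavour), but you must also rule out $m_0^*>m_1^*$; the fix is one line in your own style: if $m_1^*<m_0^*$ then $\tau_1^*=\tfrac12(m_0^*+m_1^*)>m_1^*={\rm mean}_f(\Sigma_1^*)$, so $\int_{\Sigma_1^*}(\tau_1^*-f)\,dx>0$ and ${\cal E}(\Sigma_1^*,\tau_1^*)>0={\cal E}(\emptyset,\tau_1^*)$, contradicting \eqref{E-general}. (Both your equality-case argument and the paper's use of $0<{\rm Per}(\Sigma_1^*;\Omega)$ tacitly use that a nontrivial set in $\Omega$ has positive relative perimeter, i.e.\ connectedness of $\Omega$; you are at parity with the paper there.) With that line added, your proof is complete and coincides with the paper's.
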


\begin{proof}
Since ${\cal E}(\Sigma_1^*, \tau_1^*)\le {\cal E}(\emptyset, \tau_1^*) = 0$, we conclude
$\int_{\Sigma_1^*} (\tau_1^* - f)\, dx < 0$ which implies that 
$\tau_1^* < {\rm mean}_f(\Sigma_1^*) = m_1^*$.
Similarly, since ${\cal E}(\Sigma_1^*, \tau_1^*)\le {\cal E}(\Omega, \tau_1^*)$, we see that
\[
0 < {\rm Per}(\Sigma_1^*;\Omega)  \le \mu \int_{\Omega \backslash \Sigma_1^*} (\tau_1^* - f) \, dx
\]
and consequently $m_0^* = {\rm mean}_f(\Omega\backslash \Sigma_1^*)\le \tau_1^*$. Therefore $m_0^* < m_1^*$.

Clearly, the set $\Sigma_1^*$ is also a minimizer of ${\cal E}(\cdot, \tau_1^*) + C$ 
with the constant $C := \lambda \int_{\Omega} (m_0^* -f)^2 \, dx$.
Setting $\tau_1^* := \frac{m_1^*+m_0^*}{2}$, we obtain
\begin{align}
	{\cal E}(\Sigma, \tau_1^*) \! +\! C
	&=
	{\rm Per}(\Sigma;\Omega) +\mu \int_{\Sigma} (\tau_1^* - f) \, dx + C \nonumber\\
	&=
	 {\rm Per}(\Sigma;\Omega) + \frac{\mu}{2(m_1^* - m_0^*)}
\int_{\Sigma} \big[(m_1^* -f)^2 \! - \! (m_0^* - f)^2 \big]\, dx \! + C \nonumber \\
	&=
	{\rm Per}(\Sigma;\Omega) + \lambda
	\left( \int_{\Sigma} (m_1^* -f)^2 dx + \int_{\Omega\backslash\Sigma} (m_0^* -f)^2 dx \right).	 \label{eqt3.5}
\end{align}
By the definition of $m_i$, $i=0,1$ and \eqref{pcms_m}
we get the assertion.
\end{proof}

\begin{remark}
Since $f\in [0, 1]$, we have $0< m_1^* - m_0^* \le1 $. The parameter $\lambda = \frac{\mu}{2(m_1^* - m_0^*)}$ in the
Chan-Vese model \eqref{chan-vese} is larger than $\mu$ and increases dramatically if
$(m_1^* - m_0^*)$ becomes smaller. Hence, this $\lambda$ is adapted to
the difference between $m_1^*$ and $m_0^*$ and penalizes the data term more in the Chan-Vese model if this difference becomes smaller.
We now know that when $(m_1^* - m_0^*)$ is very small, $\lambda$ used in the Chan-Vese model should be large; however,
in practice, to solve the Chan-Vese model, $\lambda$ is given beforehand with no knowledge about this kind of information. 
It is therefore very hard if not impossible for the Chan-Vese model to be given a good value of $\lambda$ to obtain a high quality 
segmentation result.
In contrast, it is much easier for our  T-ROF model to get good results by just tuning the threshold $\tau_1^*$ 
(automatically, see Section \ref{sec:segmed}), 
no matter how large or small the difference between $m_1^*$ and $m_0^*$ is.
\end{remark}

Next we give the relationship between the ROF model  and the
PCMS model for $K=2$.

\begin{theorem} \label{thm-cv-rof-equi} {\rm (Relation between ROF and PCMS models for $K=2$)}
Let $K=2$ and $u^* \in BV(\Omega)$ solve the ROF model \eqref{rof}.
For given $0<m_0 < m_1 \le 1$, let 
$\tilde{\Sigma} := \{x \in \Omega : u^*(x) > \frac{m_1+m_0}{2} \}$ fulfill 
$0 < |\tilde{\Sigma}| < |\Omega|$.
 Then $\tilde{\Sigma}$ is a  minimizer of the PCMS model \eqref{chan-vese} for
 $\lambda := \frac{\mu}{2(m_1 - m_0)}$ and fixed $m_0, m_1$.
 In particular, $(\tilde{\Sigma}, m_0, m_1)$ is a partial minimizer of \eqref{chan-vese}
 if $m_0 = {\rm mean}_f(\Omega\backslash\tilde{\Sigma})$ and $m_1 = {\rm mean}_f(\tilde{\Sigma})$.
\end{theorem}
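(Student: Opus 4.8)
The plan is to deduce Theorem \ref{thm-cv-rof-equi} directly from the machinery already assembled, essentially by combining Proposition \ref{pro-rof-trof} with the computation carried out in the proof of Theorem \ref{lem-cv-equi}. First I would invoke Proposition \ref{pro-rof-trof}: since $u^*$ solves the ROF model \eqref{rof}, for every $\tau \in (0,1)$ the superlevel set $\{x \in \Omega : u^*(x) > \tau\}$ is a minimizer of $E(\cdot,\tau)$ in \eqref{E-single}. Applying this with the specific value $\tau = \frac{m_1+m_0}{2}$, which lies in $(0,1)$ because $0 < m_0 < m_1 \le 1$, we get that $\tilde\Sigma$ minimizes $E(\cdot,\tau)$ over all measurable subsets of $\Omega$.

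Next I would add the constant $C := \lambda \int_\Omega (m_0 - f)^2\,dx$ to $E(\cdot,\tau)$, which does not change the minimizer, and then repeat verbatim the algebraic identity \eqref{eqt3.5} from the proof of Theorem \ref{lem-cv-equi} — but now with $m_0, m_1$ the given fixed constants rather than means. Writing $\lambda = \frac{\mu}{2(m_1 - m_0)}$ and completing the square exactly as there,
\[
E(\Sigma,\tau) + C = {\rm Per}(\Sigma;\Omega) + \lambda\Big(\int_\Sigma (m_1 - f)^2\,dx + \int_{\Omega\backslash\Sigma}(m_0 - f)^2\,dx\Big) = E_{\rm CV}(\Sigma, m_0, m_1).
\]
Since the left-hand side is minimized over $\Sigma$ by $\tilde\Sigma$, so is the right-hand side; that is, $\tilde\Sigma$ minimizes $E_{\rm CV}(\cdot, m_0, m_1)$ for the stated $\lambda$ and fixed $m_0, m_1$, which is the first assertion. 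For the ``in particular'' clause, I would observe that $E_{\rm CV}$ as a function of $(m_0,m_1)$ for fixed $\Sigma = \tilde\Sigma$ is minimized precisely when $m_0 = {\rm mean}_f(\Omega\backslash\tilde\Sigma)$ and $m_1 = {\rm mean}_f(\tilde\Sigma)$ (this is the elementary fact \eqref{pcms_m}, that the mean minimizes the squared $L^2$ deviation). Combining this with the minimality in $\Sigma$ just established gives both inequalities in the definition \eqref{partial_min} of a partial minimizer, so $(\tilde\Sigma, m_0, m_1)$ is a partial minimizer of \eqref{chan-vese}.

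The only genuinely delicate point is the hypothesis $0 < |\tilde\Sigma| < |\Omega|$: it guarantees that $\tau = \frac{m_1+m_0}{2}$ is a threshold that actually splits $\Omega$ into two sets of positive measure, so that the means ${\rm mean}_f(\tilde\Sigma)$ and ${\rm mean}_f(\Omega\backslash\tilde\Sigma)$ are honest averages (not the fallback value $0$ from the definition of ${\rm mean}_f$) and the consistency condition \eqref{pcms_m} makes sense; it also rules out the degenerate competitors $\Sigma = \emptyset$ and $\Sigma = \Omega$ playing a spurious role. I expect this bookkeeping around degeneracy to be the main thing to handle with care, while the rest is a transcription of the already-proved identity \eqref{eqt3.5} together with Proposition \ref{pro-rof-trof}.
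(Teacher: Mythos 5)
Your proposal is correct and follows essentially the same route as the paper: it combines Proposition \ref{pro-rof-trof} (applied at $\tau=\frac{m_1+m_0}{2}$) with the algebraic identity \eqref{eqt3.5} rewritten for fixed $m_0,m_1$, which is exactly the paper's identity \eqref{eqn:tocv}, and then uses the fact that means minimize the squared $L^2$ deviation (i.e.\ \eqref{pcms_m}) for the partial-minimizer clause. No gaps; your remarks on $0<|\tilde\Sigma|<|\Omega|$ match the paper's appeal to the first part of the proof of Theorem \ref{lem-cv-equi}.
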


\begin{proof}
Following \eqref{eqt3.5}, we have for all $m_0 < m_1$, $\lambda = \frac{\mu}{2(m_1 - m_0)}$ and $\Sigma \subset \Omega$ that
\begin{align} \label{eqn:tocv}
{\cal E}(\Sigma, \frac{m_1+m_0}{2} ) + \lambda \int_{\Omega} (m_0 -f)^2 \, dx =
E_{\rm CV}(\Sigma,m_0,m_1).
\end{align}
By Proposition \ref{pro-rof-trof}, $\tilde{\Sigma}$ minimizes
${\cal E}(\cdot, \frac{m_1+m_0}{2} )$ and hence also $E_{\rm CV}(\cdot,m_0,m_1)$.
If $\tilde{m}_0 = {\rm mean}_f(\Omega\backslash\tilde{\Sigma})$ and $\tilde{m}_1 = {\rm mean}_f(\tilde{\Sigma})$, then
$0 < \tilde{m}_0 < \tilde{m}_1$ ({\it cf.} the first part of the proof of Theorem \ref{lem-cv-equi});
and by \eqref{pcms_m} they minimize $E_{\rm CV}(\tilde{\Sigma}, \cdot, \cdot)$.
\end{proof}
\medskip

\textbf{Case $K>2$.}
Now we consider $K>2$. For ${\bf \Sigma}$ in \eqref{nest-sigma} and $\{\Omega_i\}_{i=0}^{K-1}$
in \eqref{nest-omega}, we know $\Omega_i\cup\cdots\cup \Omega_{K-1} = \Sigma_{i}$ so that
\[
\sum_{i=0}^{K-1}{\rm Per}(\Omega_i\cup\cdots\cup \Omega_{K-1}; \Omega) =
\sum_{i=0}^{K-1}{\rm Per}(\Sigma_{i}; \Omega).
\]
If ${\rm int}(\Sigma_{i})\supset \bar{\Sigma}_{i+1}$, 
that is the closure of set $\Sigma_{i+1}$ is inside the interior of $\Sigma_{i}$ and therefore 
$\partial\Sigma_{i} \cap \partial\Sigma_{i+1} = \emptyset$, then
\begin{equation}
\sum_{i=0}^{K-1}{\rm Per}(\Omega_i\cup\cdots\cup \Omega_{K-1}; \Omega) =
\sum_{i=0}^{K-1}{\rm Per}(\Sigma_{i}; \Omega) =
\frac{1}{2}\sum_{i=0}^{K-1}{\rm Per}(\Omega_i; \Omega).
\end{equation}
We consider the following variant (extension) of the PCMS model \eqref{pcms}
\begin{equation} \label{E-multi-vari-ms}
\min_{\Omega_i, m_i} \Big\{
\sum_{i=0}^{K-1}{\rm Per}(\Omega_i\cup\cdots\cup \Omega_{K-1}; \Omega)
+ \sum_{i=0}^{K-1}\tilde{\mu}_i\int_{\Omega_i} (m_i-f)^2dx \Big\},
\end{equation}
where $\tilde{\mu}_i > 0$ are regularization parameters.
Let us call model \eqref{E-multi-vari-ms} the PCMS-V model.
The relationship between the T-ROF model \eqref{E-general}--\eqref{multicond} and the PCMS-V model \eqref{E-multi-vari-ms}
is given in the following Theorem \ref{thm-equi-MS}.

\begin{theorem} \label{thm-equi-MS} {\rm (Relation between the T-ROF and PCMS-V models for $K>2$)}
For $K>2$, let $\left( \{ \Sigma^*_i \}_{i=1}^{K-1}, \{\tau_i^*\}_{i=1}^{K-1} \right)$ 
be a solution of the T-ROF model
\eqref{E-general}--\eqref{multicond}  with $m^*_i < m^*_{i+1}$, $i=0,\ldots,K-1$.
Then $\left(\{\Omega_i^*\}_{i=0}^{K-1}, \{m_i^*\}_{i=0}^{K-1} \right)$ is a partial minimizer of the
PCMS-V model \eqref{E-multi-vari-ms} with regularization parameters defined as
\begin{equation} \label{reg-par-redefine}
\tilde{\mu}_i =
\begin{cases}
\frac{\mu}{2(m_1^*-m_0^*)}, & i=0, \\
\frac{\mu}{2(m_i^*-m_{i-1}^*)}+\frac{\mu}{2(m_{i+1}^*-m_{i}^*)}, & i=1, \ldots, K-2,\\
\frac{\mu}{2(m_{K-1}^*-m_{K-2}^*)}, & i=K-1,
\end{cases}
\end{equation}
where $\{\Omega_i^*\}_{i=0}^{K-1}$
is obtained by \eqref{nest-omega} with $\{ \Sigma^*_i \}_{i=1}^{K-1}$.
\end{theorem}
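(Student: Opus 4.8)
The plan is to mimic the structure of the proof of Theorem~\ref{lem-cv-equi}, but now exploiting the separability of the T-ROF functional ${\cal E}$ in the index $i$ together with the algebraic identity
\[
\tau_i^*(f) := \frac{\mu}{2(m_i^*-m_{i-1}^*)}(m_i^*-f)^2 \;\text{-type rewriting}
\]
applied componentwise. First I would fix the solution $\left(\{\Sigma_i^*\}, \{\tau_i^*\}\right)$ and verify the two defining inequalities of a partial minimizer \eqref{partial_min} for the PCMS-V functional \eqref{E-multi-vari-ms}. The easy half is the minimization over $\{m_i\}$ for fixed partition: since $\{\Omega_i^*\}$ is fixed, \eqref{E-multi-vari-ms} is separable in the $m_i$ and each term $\tilde\mu_i\int_{\Omega_i^*}(m_i-f)^2\,dx$ is minimized at $m_i = {\rm mean}_f(\Omega_i^*) = m_i^*$ by \eqref{multicond_1}; this is exactly the analogue of \eqref{pcms_m}.

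The substantive half is the minimization over the partition $\{\Omega_i\}$ for fixed $\{m_i^*\}$. Here I would rewrite the PCMS-V energy in terms of the nested sets $\Sigma_i = \Omega_i\cup\cdots\cup\Omega_{K-1}$, using the already-recorded identity $\sum_i {\rm Per}(\Omega_i\cup\cdots\cup\Omega_{K-1};\Omega) = \sum_{i=1}^{K-1}{\rm Per}(\Sigma_i;\Omega)$ (the $i=0$ term is ${\rm Per}(\Omega;\Omega)=0$). For the data term I would write $\int_{\Omega_i}(m_i^*-f)^2 = \int_{\Sigma_i}(m_i^*-f)^2 - \int_{\Sigma_{i+1}}(m_i^*-f)^2$ and perform an Abel summation over $i$, so that the coefficient of $\int_{\Sigma_i}(\cdot)$ collects contributions from the $i$-th and $(i-1)$-th data terms; matching these coefficients with $\tilde\mu_i$ in \eqref{reg-par-redefine} should produce, for each $i=1,\ldots,K-1$, precisely a term of the form
\[
\frac{\mu}{2(m_i^*-m_{i-1}^*)}\int_{\Sigma_i}\big[(m_i^*-f)^2-(m_{i-1}^*-f)^2\big]\,dx
= \mu\int_{\Sigma_i}\Big(\tfrac{m_i^*+m_{i-1}^*}{2}-f\Big)\,dx,
\]
which is exactly $\mu\int_{\Sigma_i}(\tau_i^*-f)\,dx$ by \eqref{multicond}. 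Thus the PCMS-V energy equals ${\cal E}(\{\Sigma_i\},\zb\tau^*)$ plus a constant independent of the partition, and \eqref{E-general} gives that $\{\Sigma_i^*\}$ minimizes it; unwinding $\Omega_i=\Sigma_i\backslash\Sigma_{i+1}$ via \eqref{nest-omega} finishes the partition inequality.

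The main obstacle I anticipate is bookkeeping at the two boundary indices $i=0$ and $i=K-1$ and ensuring the telescoping is legitimate: the formula for $\tilde\mu_0$ and $\tilde\mu_{K-1}$ has only one term rather than two, reflecting $\Sigma_0=\Omega$ (whose perimeter and data contribution behave specially, since $\int_{\Sigma_0}=\int_\Omega$ is a genuine constant) and $\Sigma_K=\emptyset$. A second subtlety is that the identity $\sum{\rm Per}(\Sigma_i;\Omega) = \frac12\sum{\rm Per}(\Omega_i;\Omega)$ was only asserted under the separation hypothesis ${\rm int}(\Sigma_i)\supset\bar\Sigma_{i+1}$; however, since the PCMS-V model \eqref{E-multi-vari-ms} is stated directly with the $\sum{\rm Per}(\Omega_i\cup\cdots;\Omega)$ term, I would not need that extra hypothesis for this theorem and can work with the nested-set perimeters throughout. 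The convexity/ordering input $m_i^*<m_{i+1}^*$ is used exactly where Theorem~\ref{lem-cv-equi} used $m_0^*<m_1^*$: to guarantee the coefficients $\frac{\mu}{2(m_i^*-m_{i-1}^*)}$ are positive so that $\tilde\mu_i>0$ and the rewriting is valid; I would flag that this ordering is \emph{assumed} here rather than derived (unlike the $K=2$ case), consistent with the ``specific assumptions'' mentioned in the introduction.
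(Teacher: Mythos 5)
Your overall architecture is reasonable (the $m$-marginal via the mean property is indeed immediate, and your observation that the separation hypothesis ${\rm int}(\Sigma_i)\supset\bar\Sigma_{i+1}$ is not needed because \eqref{E-multi-vari-ms} is written directly with the union-perimeters is correct), but the pivotal identity you rely on is false. Write $\lambda_i:=\frac{\mu}{2(m_i^*-m_{i-1}^*)}$, so that \eqref{reg-par-redefine} reads $\tilde\mu_0=\lambda_1$, $\tilde\mu_i=\lambda_i+\lambda_{i+1}$ for $1\le i\le K-2$, and $\tilde\mu_{K-1}=\lambda_{K-1}$. After your substitution $\int_{\Omega_i}(m_i^*-f)^2\,dx=\int_{\Sigma_i}(m_i^*-f)^2\,dx-\int_{\Sigma_{i+1}}(m_i^*-f)^2\,dx$ and summation by parts, the coefficient of $\int_{\Sigma_i}$ in the PCMS-V data term at fixed $\zb m^*$ is $\tilde\mu_i(m_i^*-f)^2-\tilde\mu_{i-1}(m_{i-1}^*-f)^2$, which equals $\lambda_i\bigl[(m_i^*-f)^2-(m_{i-1}^*-f)^2\bigr]+\lambda_{i+1}(m_i^*-f)^2-\lambda_{i-1}(m_{i-1}^*-f)^2$ (with the convention $\lambda_0=\lambda_K=0$), and not the single difference $\lambda_i\bigl[(m_i^*-f)^2-(m_{i-1}^*-f)^2\bigr]=\mu(\tau_i^*-f)$ that you need; matching would require $\tilde\mu_i=\tilde\mu_{i-1}=\lambda_i$ for every $i$, which \eqref{reg-par-redefine} never satisfies for $K>2$. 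The leftover terms are $x$-dependent and weighted by $\int_{\Sigma_i}$, hence not a partition-independent constant: for $K=3$ the difference between the PCMS-V energy at $\zb m^*$ and ${\cal E}(\{\Sigma_i\},\zb\tau^*)$ is $\lambda_1\int_\Omega(m_0^*-f)^2\,dx+\lambda_2\int_{\Sigma_1}(m_1^*-f)^2\,dx-\lambda_1\int_{\Sigma_2}(m_1^*-f)^2\,dx$, which varies with the partition. So ``PCMS-V $={\cal E}+\mathrm{const}$'' fails, and the set-marginal inequality cannot be deduced from \eqref{E-general} along the route you propose.

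The reason is that \eqref{reg-par-redefine} encodes the opposite bookkeeping from the one you performed: $\tilde\mu_i=\lambda_i+\lambda_{i+1}$ arises because the integral $\int_{\Omega_i}(m_i-f)^2\,dx$ appears in \emph{two} neighbouring two-phase problems (those attached to the thresholds $\tau_i^*$ and $\tau_{i+1}^*$), and collecting terms by $\Sigma_i$ destroys exactly this pairing. The paper's proof goes the other way: it uses the separability of ${\cal E}(\cdot,\zb\tau^*)$ in $i$, applies Theorem \ref{lem-cv-equi} (through the rewriting \eqref{eqt3.5}) to each componentwise problem \eqref{E-separate-i}, passes to the form \eqref{E-separate-i-equi-rewrite-2} whose data live on $\Omega_{i-1}$ and $\Omega_i$ rather than on $\Omega\backslash\Sigma_i$ and $\Sigma_i$ --- an equivalence valid precisely because the remaining sets are held fixed, so the discarded integrals are constants in the variable $\Sigma_i$ --- and then sums these functionals, whose sum is exactly the PCMS-V energy with the weights \eqref{reg-par-redefine}. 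In other words, the partial-minimizer property in the set variable is obtained one $\Sigma_i$ at a time with the other sets fixed, not through a single global identity with ${\cal E}(\cdot,\zb\tau^*)$. To repair your argument you should adopt this per-$\Omega_i$ decomposition (equivalently, verify the set-marginal by varying one nested set at a time), since the global identity you posit does not hold for any admissible choice of the $\tilde\mu_i$ in \eqref{reg-par-redefine}.
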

\begin{proof}
When minimizing ${\cal E}({\bf \Sigma}, \zb \tau^*)$ \eqref{functional-E-general} with respect to
${\bf \Sigma}$, i.e.,
\begin{equation}\label{E-separate-i}
\min_{\Sigma_i} \Big\{ {\rm Per}(\Sigma_i, \Omega)+ \mu
\int_{\Sigma_i} (\tau_i^*-f)dx \Big\},
\quad \quad i=1, \ldots, K-1,
\end{equation}
it is clear that finding $\Sigma_i^*$ is independent with finding $\Sigma_j^*$ for $j\neq i$.
Hence, $\Sigma_1^*, \ldots, \Sigma_{i-1}^*$, $\Sigma_{i+1}^*, \ldots, \Sigma_{K-1}^*$
can be regarded as fixed when finding $\Sigma_i^*$.
Note that, we also have
$\Sigma_1^*\supseteq\Sigma_2^* \supseteq \cdots \supseteq \Sigma_{K-1}^*$ using Lemma \ref{tau-sigma-ordering}.
From Theorem \ref{lem-cv-equi}, we know the minimizer of
\eqref{E-separate-i} for each $i$ is a partial minimizer of
\begin{equation}\label{E-separate-i-equi-rewrite}
\min_{\Sigma_i,m_{i-1}, m_i} \Big\{ {\rm Per}(\Sigma_{i}; \Omega) +
\frac{\mu}{2(m_{i}^*-m_{i-1}^*)} \Big(\int_{\Sigma_{i}}
(m_{i}-f)^2dx + \int_{\Omega\backslash \Sigma_i} (m_{i-1}-f)^2dx\Big) \Big \},
\end{equation}
which is equivalent to
\begin{align}
\min_{\Omega_i,m_{i-1}, m_i} & \Big\{ {\rm Per}(\Omega_i\cup\Omega_{i+1}^*\cup\cdots\cup \Omega_{K-1}^*; \Omega) \nonumber \\
& + \frac{\mu}{2(m_{i}^*-m_{i-1}^*)} \Big(\int_{\Omega_{i}} (m_{i}-f)^2dx
 + \int_{\Omega_{i-1}} (m_{i-1}-f)^2dx\Big) \Big\}. \label{E-separate-i-equi-rewrite-2}
\end{align}
The proof is completed by summing up the above objective functions for all $i = 1, \ldots, K-1$.
\end{proof}

\begin{remark}
Note that for the standard PCMS model in \eqref{pcms}, the regularization parameter $\mu$ is fixed. In contrast,
its variation PCMS-V model in \eqref{E-multi-vari-ms} derived from our T-ROF model \eqref{E-general} has more flexible regularization
parameters. This kind of resetting for regularization parameters will avail the PCMS-V model (and our T-ROF method which is given in the
next section) for multiphase segmentation particularly for images containing phases with close intensities.
We demonstrate this fact in experimental results later.
\end{remark}

In summary, we conclude that, when $K=2$, the T-ROF model gives a segmentation result of the PCMS model
for fixed $\lambda$ and $m_i, i=0, 1$ using the way of defining $\lambda$ in Theorem \ref{thm-cv-rof-equi}, and
vice versa. When $K>2$, Theorem \ref{thm-equi-MS} tells us that, if $\partial\Sigma_{i} \cap \partial\Sigma_{i+1} = \emptyset, i=1, \ldots, K-1$,
then the T-ROF model gives a segmentation result of the PCMS model with $\mu$ redefined as in \eqref{reg-par-redefine};
otherwise, only an approximation to the PCMS model can be achieved by the T-ROF model. 
It is worth mentioning that the case of $\partial\Sigma_{i} \cap \partial\Sigma_{i+1} \neq \emptyset$ is due to jumps in the ROF solution,
which are a subset of the jumps of the original image $f$, see \cite[Theorem 5]{CNCP10} for more details. 

Even though the equivalence between the PCMS model and the T-ROF model cannot hold anymore 
when $K>2$ and $\partial\Sigma_{i} \cap \partial\Sigma_{i+1} \neq \emptyset$,  
the way of quantifying, reducing and/or finally overcoming the gap between them will be of great interest for future research. 
Note, importantly, that the lack of equivalence for $K>2$  does not mean that the T-ROF model performs poorer than the PCMS model. 
In the experimental results, we will show that 
in some cases the T-ROF method is actually much better than the state-of-the-art methods based on the PCMS model.

\section{T-ROF Algorithm and Its Convergence} \label{sec:segmed}
Proposition \ref{pro-rof-trof} implies that we can
obtain a minimizer $\zb \Sigma$ of ${\cal E}(\cdot,\zb \tau)$ in \eqref{E-general}
by minimizing the ROF functional and subsequently thresholding the minimizing function by $\tau_i$, $i=1,\ldots,K-1$.
This method is particularly efficient since the minimizer of the ROF functional remains the same and thus just need
to be solved once when we apply various thresholds $\zb \tau^ {(k)}$.
Here, at iteration $k$, when we have $({\bf \Sigma}^{(k)}, \zb \tau^{(k)})$, we use the following rule to
obtain $\zb \tau^{(k+1)}$:
\begin{equation} \label{trof-thd}
 m_i^{(k)} = {\rm mean}_f \left(\Sigma_i^{(k)} \backslash \Sigma_{i+1}^{(k)} \right), \
\tau_i^{(k+1)} = \frac12 \left(m_{i-1}^{(k)} + m_i^{(k)} \right),  \ i=1,\ldots,K-1,
\end{equation}
where we assume the ordering
$0 < \tau_1^{(k)} <  \cdots < \tau_{K-1}^{(k)} < 1$ and 
\begin{equation} \label{eqn:seg-meaningful-omega}
|\Omega_i^{(k)}| = |\Sigma_{i}^{(k)} \backslash \Sigma_{i+1}^{(k)}| > 0, \quad  i = 0, \ldots, K-1.
\end{equation}
We abbreviate the rule in \eqref{trof-thd} by 
\begin{equation} \label{trof-thd-abb}
\zb \tau^{(k+1)}= \Phi({\bf \Sigma}^{(k)}, \zb \tau^{(k)}).
\end{equation}

\begin{remark}
If the above criterion \eqref{eqn:seg-meaningful-omega}  is not satisfied at step $k$, 
for example  $|\Omega_i^{(k)}| = 0$, $i\in  \{0, \ldots, K-1\}$, then
we remove $\tau_i^{(k)}$ from the 
threshold sequence $\{\tau_i^{(k)}\}_{i =1}^{K-1}$, which will not affect much of the segmentation result practically when not considering 
measure zero set. A new threshold sequence $\{\tau_i^{(k)}\}_{i =1}^{\hat{K}-1}, \hat{K}<K$ can be formed 
which is considered in \eqref{eqn:seg-meaningful-omega} now.

For a set  $\hat{\Omega} \subset \Omega$ and $\hat{\Omega} \supset \Sigma \neq \emptyset$, where $\Sigma$ is the minimizer of ${E}_{ \Omega}(\cdot,\tau)$
in \eqref{E-single}, we want that segmenting  
$\hat{\Omega}$ into $\Sigma$ and $\hat{\Omega} \backslash \Sigma$ 
does not produce a worse segmentation result than the
naive segmentation of $\hat{\Omega}$ into the set itself and an empty set. In other words, we require 
\begin{equation} \label{eqn:assump}
	E_{\hat{\Omega}}(\Sigma,{\tau}) \le E_{\hat{\Omega}}(\hat{\Omega},{\tau}) =  \mu \big(\tau - {\rm mean}_f (\hat \Omega) \big) |\hat \Omega|,
	\quad
	E_{\hat{\Omega}}(\Sigma,{\tau}) \le E_{\hat{\Omega}}(\emptyset,{\tau}) = 0.
\end{equation}
If the requirement \eqref{eqn:assump} is not fulfilled, this might imply that the difference between $\Sigma$ and
$\hat{\Omega} \backslash \Sigma$ is too subtle to be distinguished and  we keep $\hat{\Omega}$ rather than segmenting it into 
$\Sigma$ and $\hat{\Omega} \backslash \Sigma$.
\end{remark}

\begin{lemma} \label{lemma:assump}
For $i = 1, \ldots, K-2$, let $\Sigma_{i} \neq \Omega$ and $\Sigma_{i+1} \neq \emptyset$ be the minimizers of ${E}(\cdot,{\tau_i})$ and
${E}(\cdot,{\tau_{i+1}})$ for  $0< \tau_i < \tau_{i+1} < 1$ appearing in the T-ROF model, where $|\Sigma_{i} \backslash \Sigma_{i+1}| > 0$.
If the inequalities in \eqref{eqn:assump} hold, then 
\begin{gather} \label{eqn:add-rule}
\tau_i \le {\rm mean}_f \left(\Sigma_{i} \backslash \Sigma_{i+1} \right) \le \tau_{i+1}.
\end{gather}
\end{lemma}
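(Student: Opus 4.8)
The plan is to read off each of the two inequalities in \eqref{eqn:add-rule} from a single use of the ``no worse than the trivial segmentation'' requirement \eqref{eqn:assump}: one application at the cut $(\Sigma_i,\Sigma_{i+1})$ and one at the complementary cut. By Lemma \ref{tau-sigma-ordering}, $0<\tau_i<\tau_{i+1}<1$ forces $\Sigma_{i+1}\subseteq\Sigma_i$ up to a null set, so $\Omega_i:=\Sigma_i\backslash\Sigma_{i+1}$ is meaningful and $|\Omega_i|>0$ by assumption; every inequality below is divided by $\mu|\Omega_i|>0$ at the end. Throughout I only use that a perimeter is nonnegative and the identity $E_{\hat\Omega}(\hat\Omega,\tau)=\mu\int_{\hat\Omega}(\tau-f)\,dx$ recorded in the Remark.

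For the bound ${\rm mean}_f(\Omega_i)\le\tau_{i+1}$ I would apply the first inequality of \eqref{eqn:assump} with $\hat\Omega:=\Sigma_i$, $\tau:=\tau_{i+1}$ and $\Sigma:=\Sigma_{i+1}$; this is a legitimate instance since $\Sigma_{i+1}$ is the minimizer of $E_\Omega(\cdot,\tau_{i+1})$ and $\emptyset\neq\Sigma_{i+1}\subseteq\Sigma_i\neq\Omega$. Discarding the nonnegative term ${\rm Per}(\Sigma_{i+1};{\rm int}\,\Sigma_i)$ from the left and using $E_{\Sigma_i}(\Sigma_i,\tau_{i+1})=\mu\int_{\Sigma_i}(\tau_{i+1}-f)\,dx$, the inequality $E_{\Sigma_i}(\Sigma_{i+1},\tau_{i+1})\le E_{\Sigma_i}(\Sigma_i,\tau_{i+1})$ collapses, after cancelling the integral over $\Sigma_{i+1}$, to $0\le\mu\int_{\Omega_i}(\tau_{i+1}-f)\,dx$.

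For the bound $\tau_i\le{\rm mean}_f(\Omega_i)$ I would run the mirror argument on complements. Since ${\rm Per}(\Sigma;\Omega)={\rm Per}(\Omega\backslash\Sigma;\Omega)$ and $\int_{\Sigma}(\tau-f)\,dx$ and $\int_{\Omega\backslash\Sigma}(f-\tau)\,dx$ differ only by a constant independent of $\Sigma$, the functional $E(\cdot,\tau)$ and the functional $\Sigma'\mapsto{\rm Per}(\Sigma';\Omega)+\mu\int_{\Sigma'}(f-\tau)\,dx$ agree up to an additive constant under $\Sigma'=\Omega\backslash\Sigma$; hence $\Omega\backslash\Sigma_i$ minimizes the latter at level $\tau_i$, this complementation leaves the requirement \eqref{eqn:assump} intact, and $\Omega\backslash\Sigma_i\subseteq\Omega\backslash\Sigma_{i+1}$ with both sets non-trivial (here the hypotheses $\Sigma_i\neq\Omega$ and $\Sigma_{i+1}\neq\emptyset$ are used again). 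Repeating the previous computation verbatim with $\hat\Omega:=\Omega\backslash\Sigma_{i+1}$, $\Sigma:=\Omega\backslash\Sigma_i$ and $\tau:=\tau_i$ — whose difference set $\hat\Omega\backslash\Sigma$ is again $\Omega_i$ — yields $0\le\mu\int_{\Omega_i}(f-\tau_i)\,dx$. Combining the two bounds proves \eqref{eqn:add-rule}.

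The point that needs care, and which I expect to be the crux, is the complementation step: one must check that the instance invoked for the lower bound is genuinely covered by the hypothesis that ``the inequalities in \eqref{eqn:assump} hold''. This reduces to the invariance of the requirement \eqref{eqn:assump} under $\Sigma\mapsto\hat\Omega\backslash\Sigma$ (equivalently under $f\mapsto1-f$, $\tau\mapsto1-\tau$), which in turn follows from ${\rm Per}(\Sigma;{\rm int}\,\hat\Omega)={\rm Per}(\hat\Omega\backslash\Sigma;{\rm int}\,\hat\Omega)$ together with ${\rm Per}(\hat\Omega;{\rm int}\,\hat\Omega)=0$. Everything else is routine manipulation of the Radon measure $|D\chi|$.
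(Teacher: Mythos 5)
Your proof is correct and follows essentially the paper's argument: the upper bound is obtained exactly as in the paper, by applying the first inequality of \eqref{eqn:assump} with $\hat\Omega=\Sigma_i$, $\Sigma=\Sigma_{i+1}$, $\tau=\tau_{i+1}$ and discarding the nonnegative perimeter term. Your complementation detour for the lower bound, once unwound via ${\rm Per}(\Omega\backslash\Sigma_i;\Omega\backslash\Sigma_{i+1})={\rm Per}(\Sigma_i\backslash\Sigma_{i+1};\Omega\backslash\Sigma_{i+1})$, is precisely the paper's direct application of the second inequality of \eqref{eqn:assump} with $\hat\Omega=\Omega\backslash\Sigma_{i+1}$, $\Sigma=\Sigma_i\backslash\Sigma_{i+1}$ and $\tau=\tau_i$, so the instance you flagged as the crux is indeed covered by the hypothesis (the two inequalities in \eqref{eqn:assump} simply swap roles under complementation).
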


\begin{proof} By definition we have $\Sigma_{i} \supset \Sigma_{i+1}$.
Let $\hat{\Omega} = \Sigma_i$ and $\Sigma = \Sigma_{i+1}$. 
Then using the first inequality in \eqref{eqn:assump}, we have
\begin{align}
{\rm Per}(\Sigma_{i+1};\Sigma_i) + \mu \int_{ \Sigma_{i+1} } (\tau_{i+1} - f) \, dx 
&\le \mu \int_{ \Sigma_{i} } (\tau_{i+1} -f) \, dx,
\label{eqn:assump-rhs}
\end{align}
i.e.,
\begin{equation*}
	0 \le {\rm Per}(\Sigma_{i+1};\Sigma_{i})  \le  \mu \int_{ \Sigma_{i}\backslash \Sigma_{i+1} } (\tau_{i+1} - f) \, dx
\end{equation*}
which implies 
$
{\rm mean}_f \left( \Sigma_{i} \backslash \Sigma_{i+1} \right) \le \tau_{i+1}.
$

Let $\hat{\Omega} = \Omega \backslash \Sigma_{i+1}$ and 
$\Sigma = \Sigma_{i} \backslash \Sigma_{i+1}$.  
Using the second inequality in \eqref{eqn:assump}, we have
\begin{equation*} 
	{\rm Per}\big(\Sigma_{i} \backslash \Sigma_{i+1}; \Omega\backslash \Sigma_{i+1}\big) 
	+ \mu \int_{ \Sigma_{i} \backslash \Sigma_{i+1} } (\tau_i - f) \, dx \le 0,
\end{equation*}
which implies with $|\Sigma_{i} \backslash \Sigma_{i+1}| > 0$ that
\begin{equation*}
\mu \int_{ \Sigma_{i} \backslash \Sigma_{i+1} } (\tau_i - f) \, dx \le 0,
\end{equation*}
i.e.,
$
\tau_i \le {\rm mean}_f (\Sigma_{i} \backslash \Sigma_{i+1}).
$
This completes the proof.
\end{proof}

In practice, for $\Sigma_{i} \supset \Sigma_{i+1}$ ($0< \tau_i < \tau_{i+1} < 1$), if 
$\Sigma_{i} \backslash \Sigma_{{i+1}}$ contains different texture from $\Sigma_{{i+1}}$, 
then it is highly likely that segmenting $\Sigma_i$ into $\Sigma_{i} \backslash \Sigma_{{i+1}}$
and $\Sigma_{{i+1}}$ is much better than the naive segmentation of $\Sigma_{i}$.
This gives us a useful criterion to design our T-ROF algorithm: at some step, if the naive segmentation of $\Sigma_{i}$ is better
than segmenting it into $\Sigma_{i} \backslash \Sigma_{i+1}$
and $\Sigma_{i+1}$, then 
removing $\Sigma_{i+1}$ (${\tau_{i+1}}$) from the set (threshold) sequence is more meaningful and 
will not influence the segmentation result much.

In sum, after obtaining $({\bf \Sigma}^{(k)}, \zb \tau^{(k)})$ at iteration $k$, we apply 
the criterions in \eqref{eqn:seg-meaningful-omega} and \eqref{eqn:add-rule} to remove measure zero phases and ignore
unnecessary segmentation, respectively. To achieve this, we can particularly keep checking the following steps i)--iii) until they are all satisfied: 
i) if $(\Sigma_{i}^{(k)}, \tau_{i}^{(k)})$ and $(\Sigma_{i+1}^{(k)}, \tau_{i+1}^{(k)})$, 
$0\le i \le K-1$, do not fulfil
the criterion  \eqref{eqn:seg-meaningful-omega}, then remove $(\Sigma_{i+1}^{(k)}, \tau_{i+1}^{(k)})$ from 
$({\bf \Sigma}^{(k)}, \zb \tau^{(k)})$, and form a new $({\bf \Sigma}^{(k)}, \zb \tau^{(k)})$ by reordering the indices, and update $K$; 
ii) if $(\Sigma_{i}^{(k)}, \tau_{i}^{(k)})$, $1\le i \le K-1$, and one of its neighbors in $({\bf \Sigma}^{(k-1)}, \zb \tau^{(k-1)})$ 
say $(\Sigma_{j}^{(k-1)}, \tau_{j}^{(k-1)}), j \in \{i-1, i, i+1\}$, do not fulfil
the criterion  \eqref{eqn:add-rule}, then replace the values of $(\Sigma_{i}^{(k)}, \tau_{i}^{(k)})$ by $(\Sigma_{j}^{(k-1)}, \tau_{j}^{(k-1)})$;
and iii) if $(\Sigma_{i}^{(k)}, \tau_{i}^{(k)})$ and $(\Sigma_{i+1}^{(k)}, \tau_{i+1}^{(k)})$, $0\le i \le K-1$, do not fulfil
the criterion  \eqref{eqn:add-rule}, then remove $(\Sigma_{i+1}^{(k)}, \tau_{i+1}^{(k)})$ from 
$({\bf \Sigma}^{(k)}, \zb \tau^{(k)})$, and form a new $({\bf \Sigma}^{(k)}, \zb \tau^{(k)})$ by reordering the indices, and update $K$.
We abbreviate the above steps i)--iii) conducting the criterions \eqref{eqn:seg-meaningful-omega} and \eqref{eqn:add-rule} by 
\begin{equation} \label{eqn:clean}
({\bf \Sigma}^{(k)}, \zb \tau^{(k)})  := {\cal C} ({\bf \Sigma}^{(k)}, \zb \tau^{(k)}).
\end{equation}

By combining the criterion in \eqref{eqn:clean} and  the rule of updating $\zb \tau^{(k)}$ by \eqref{trof-thd-abb},
we obtain our T-ROF segmentation method in Algorithm \ref{alg:t-rof}, to find a solution of the T-ROF model \eqref{E-general}--\eqref{multicond}.

\begin{algorithm}[h]
 \caption{T-ROF Segmentation Algorithm}
 \label{alg:t-rof}

{\bf Initialization:} Phase number $K \ge 2$ and
initial thresholds
$\zb\tau^{(0)} = \big(\tau_i^{(0)} \big)_{i=1}^{K-1}$ with $0 \le \tau_1^{(0)}
< \cdots < \tau_{K-1}^{(0)} \le 1$. \\
 Compute the solution $u$ of the ROF model \eqref{rof}. \\
  {\bf For} $k=0, 1,\ldots,$ until stopping criterion reached
  \begin{itemize}
 \item[1.] Compute the minimizers $\Sigma_{i}^{(k)}$ of $E(\cdot,\tau_i^{(k)})$
 by setting \\ 
  $\Sigma_{i}^{(k)} = \{x\in \Omega: u(x)>\tau_i^{(k)}\}$.
 \item[2.] Apply criterions \eqref{eqn:seg-meaningful-omega} and \eqref{eqn:add-rule} to update $({\bf \Sigma}^{(k)}, \zb \tau^{(k)})$, e.g. see \eqref{eqn:clean}.
 \item[3.] Update $\zb \tau^{(k+1)}= \Phi({\bf \Sigma}^{(k)}, \zb \tau^{(k)})$ by \eqref{trof-thd-abb}.
\end{itemize}
{\bf Endfor}

\end{algorithm}

From  Algorithm \ref{alg:t-rof}, we see that  the T-ROF method exactly follows the new paradigm which is: performing image segmentation
through image restoration plus a thresholding. The T-ROF method can therefore be regarded as a special case of our proposed
SaT method in \cite{CCZ13}. Particularly, in addition to the theoretical bare bones in the T-ROF method and the fast speed it inherits from
the standard SaT method, the iterative way of selecting optimal thresholds -- the strategy in \eqref{trof-thd} -- in the T-ROF 
method makes it more effective in multiphase segmentation when compared with the standard SaT method which uses the K-means to 
select thresholds. This advantage is more notable when segmenting images with phases which have close intensities.

There are many efficient methods to solve the ROF model, for example
the primal-dual algorithm \cite{CP11}, alternating direction method with multipliers (ADMM) \cite{BPCPE10},
or the split-Bregman algorithm \cite{GO09}. In this paper,
we use the ADMM to solve the ROF model \eqref{rof}.
The convergence property of Algorithm \ref{alg:t-rof} is discussed in the next section.

\subsection{Convergence Proof of the T-ROF Algorithm}  \label{subsec:proof}
We now prove the convergence of our T-ROF algorithm -- Algorithm \ref{alg:t-rof}.

\begin{lemma} \label{lem-ass-seq}
Let $A,B,C,D \subset \mathbb R^2$ be bounded measurable sets with $A \supseteq B \supseteq C \supseteq D$
and let $f: \mathbb R^2 \rightarrow \mathbb R$ be a Lebesgue-integrable function.
Then the following implications hold true:
\begin{itemize}
 \item[{\rm i)}] if ${\rm mean}_f(A \backslash B) \le {\rm mean}_f(B \backslash C)$,
then: \\
${\rm mean}_f(A \backslash C) \le {\rm mean}_f(B \backslash C)$
and
${\rm mean}_f(A \backslash B) \le {\rm mean}_f(A \backslash C)$;
 \item[{\rm ii)}] if ${\rm mean}_f(A \backslash B) \le {\rm mean}_f(B \backslash C) \le {\rm mean}_f(C \backslash D)$,
then: \\
 ${\rm mean}_f(A \backslash C) \le {\rm mean}_f(B \backslash D)$.
\end{itemize}
\end{lemma}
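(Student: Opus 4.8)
The plan is to reduce everything to one elementary fact about means over a disjoint union. Namely, for bounded measurable sets $P,Q$ with $|P\cap Q|=0$, the quantity ${\rm mean}_f(P\cup Q)$ is a convex combination of ${\rm mean}_f(P)$ and ${\rm mean}_f(Q)$, and hence lies between them. Concretely, if $|P\cup Q|>0$ then
\[
{\rm mean}_f(P\cup Q) = \frac{|P|}{|P\cup Q|}\,{\rm mean}_f(P) + \frac{|Q|}{|P\cup Q|}\,{\rm mean}_f(Q),
\]
which follows from $\int_{P\cup Q} f = \int_P f + \int_Q f$ together with $|P\cup Q| = |P| + |Q|$; the convention ${\rm mean}_f(\cdot) = 0$ on null sets causes no trouble here, since when $|P| = 0$ the weight $|P|/|P\cup Q|$ vanishes (and likewise for $Q$), and if $|P\cup Q| = 0$ all three means are $0$. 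In particular,
\[
\min\{{\rm mean}_f(P),{\rm mean}_f(Q)\}\ \le\ {\rm mean}_f(P\cup Q)\ \le\ \max\{{\rm mean}_f(P),{\rm mean}_f(Q)\}.
\]

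First I would record the two set identities that make this applicable. Since $A\supseteq B\supseteq C\supseteq D$, the sets $A\backslash B$ and $B\backslash C$ are disjoint and $A\backslash C = (A\backslash B)\cup(B\backslash C)$; similarly $B\backslash C$ and $C\backslash D$ are disjoint and $B\backslash D = (B\backslash C)\cup(C\backslash D)$. Each is a one-line membership check: any $x\in A\backslash C$ satisfies $x\notin C$, and then either $x\notin B$, putting $x$ in $A\backslash B$, or $x\in B$, putting $x$ in $B\backslash C$; disjointness is immediate because one piece requires $x\notin B$ and the other requires $x\in B$.

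For part i), I apply the observation with $P=A\backslash B$, $Q=B\backslash C$, so that $P\cup Q=A\backslash C$: then ${\rm mean}_f(A\backslash C)$ lies between ${\rm mean}_f(A\backslash B)$ and ${\rm mean}_f(B\backslash C)$. Under the hypothesis ${\rm mean}_f(A\backslash B)\le{\rm mean}_f(B\backslash C)$ this reads exactly ${\rm mean}_f(A\backslash B)\le{\rm mean}_f(A\backslash C)\le{\rm mean}_f(B\backslash C)$, which is the pair of asserted inequalities. For part ii), I apply the observation twice: from $A\backslash C=(A\backslash B)\cup(B\backslash C)$ and ${\rm mean}_f(A\backslash B)\le{\rm mean}_f(B\backslash C)$ I get ${\rm mean}_f(A\backslash C)\le{\rm mean}_f(B\backslash C)$, and from $B\backslash D=(B\backslash C)\cup(C\backslash D)$ and ${\rm mean}_f(B\backslash C)\le{\rm mean}_f(C\backslash D)$ I get ${\rm mean}_f(B\backslash C)\le{\rm mean}_f(B\backslash D)$; chaining these gives ${\rm mean}_f(A\backslash C)\le{\rm mean}_f(B\backslash D)$.

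I do not expect a real obstacle: the only delicate point is the bookkeeping around the ${\rm mean}_f(\cdot)=0$ convention on sets of measure zero, and the convex-combination identity above is deliberately phrased so as to absorb it, so no separate case analysis is needed beyond the trivial $|P\cup Q|=0$ case.
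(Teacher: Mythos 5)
Your proof is correct and rests on the same ingredients as the paper's: the disjoint decompositions $A\setminus C=(A\setminus B)\cup(B\setminus C)$ and $B\setminus D=(B\setminus C)\cup(C\setminus D)$ together with the fact that the mean over a disjoint union is a weighted average of the two means, with part ii) obtained by chaining exactly as in the paper. The only difference is presentational: you state the weighted-average fact directly (which also absorbs the ${\rm mean}_f=0$ convention on null sets), whereas the paper derives the same inequalities by contradiction with a short case analysis on $|B\setminus C|=0$ and $|A\setminus B|=0$.
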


\begin{proof}
i) We first prove the first assertion in i).
If $|A \backslash C| = 0$, then this assertion is clearly true.
Let $|A \backslash C| > 0$.
Since
$A\backslash C = A\backslash B \cup B \backslash C$ and $A\backslash B \cap B \backslash C = \emptyset$,
we conclude $|A\backslash C| = |A\backslash B| + |B \backslash C|$ and in particular
$|A\backslash B| >0$ if $|B \backslash C| = 0$.
Assume that ${\rm mean}_f(A \backslash C) > {\rm mean}_f(B \backslash C)$. Then
$$
\frac{ \int_{A \backslash C} f \, dx }{ |A\backslash C|}
=
\frac{ \int_{A \backslash B} f \, dx + \int_{ B \backslash C} f \, dx }{ |A\backslash B|
+ |B \backslash C| }
> {\rm mean}_f(B \backslash C) =
\left\{
\begin{array}{ll}
0 &{\rm if} \; |B \backslash C| = 0,\\
\frac{ \int_{B \backslash C} f \, dx}{|B \backslash C|} &{\rm if} \; |B \backslash C| > 0.
\end{array}
\right.
$$
Both cases yield a contradiction to the assumption ${\rm mean}_f(A \backslash B) \le {\rm mean}_f(B \backslash C)$.

Concerning the second assertion in i) we are done if $|A \backslash B| = 0$.
If $|A \backslash B| > 0$, then by the above considerations $|A \backslash C| > 0$
and assuming
${\rm mean}_f(A \backslash B) > {\rm mean}_f(A \backslash C)$
we obtain
$$
\frac{ \int_{A \backslash C} f \, dx }{ |A\backslash C|}
=
\frac{ \int_{A \backslash B} f \, dx + \int_{ B \backslash C} f \, dx }{ |A\backslash B|
+ |B \backslash C| }
<
\frac{ \int_{A \backslash B} f \, dx }{ |A\backslash B|}.
$$
This yields again a contradiction to the assumption.

ii) Applying the first/second implication in i) with respect to the first/second inequality in the assumption of ii) we obtain
$$
{\rm mean}_f(A \backslash C) \le {\rm mean}_f(B \backslash C)
\quad {\rm and} \quad
{\rm mean}_f(B \backslash C) \le {\rm mean}_f(B \backslash D),
$$
which lead to
${\rm mean}_f(A \backslash C) \le {\rm mean}_f(B \backslash D)$
and we are done.
\end{proof}

Recall that $\tau_i^{(k+1)} = \frac12(m_{i-1}^{(k)} + m_i^{(k)})$,
where
$m_i^{(k)} = {\rm mean}_f(\Sigma_i^{(k)} \backslash \Sigma_{i+1}^{(k)})$.
Using the criterion derived in \eqref{eqn:add-rule} and Lemma \ref{lem-ass-seq}, we can prove the following lemma.
\begin{lemma} \label{lem-ass}
Our T-ROF Algorithm \ref{alg:t-rof} produces sequences
$( \zb \tau^{(k)} )_k$ and $(\zb m^{(k)})_k$
with the following properties:
\begin{itemize}
\item[{\rm i)}]
$
0 \le m_0^{(k)} \le \tau_1^{(k)} \le m_1^{(k)}
\le \cdots \le m_{K-2}^{(k)} \le \tau_{K-1}^{(k)} \le m_{K-1}^{(k)}
$.
\item[{\rm ii)}] Set
$\tau_0^{(k)} := 0$ and $\tau_K^{(k)} := 1$.
If $\tau_i^{(k)} \ge \tau_i^{(k-1)}$ and
$\tau_{i+1}^{(k)} \ge \tau_{i+1}^{(k-1)}$,
then
$m_i^{(k)} \ge m_i^{(k-1)}$, $i=0,\ldots,K-1$
and this also holds true if $\ge$ is replaced everywhere by $\le$.
\end{itemize}
\end{lemma}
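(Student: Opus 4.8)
The plan is to establish i) first, and then derive ii) from i) together with the ``cross-iteration'' instances of \eqref{eqn:add-rule} produced by the cleaning step ${\cal C}$.

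\emph{Item} i). After Step~2 of Algorithm~\ref{alg:t-rof}, $({\bf \Sigma}^{(k)},\zb\tau^{(k)})$ has passed through ${\cal C}$ and hence satisfies \eqref{eqn:add-rule} for every consecutive pair $(\Sigma_i^{(k)},\Sigma_{i+1}^{(k)})$, i.e.\ $\tau_i^{(k)}\le m_i^{(k)}\le\tau_{i+1}^{(k)}$ for $i=1,\dots,K-2$. The two remaining bounds $m_0^{(k)}\le\tau_1^{(k)}$ and $\tau_{K-1}^{(k)}\le m_{K-1}^{(k)}$ do not need the cleaning step: since $u$ solves the ROF model, Proposition~\ref{pro-rof-trof} makes $\Sigma_i^{(k)}=\{u>\tau_i^{(k)}\}$ a minimizer of $E(\cdot,\tau_i^{(k)})$, so comparing $\Sigma_1^{(k)}$ with the competitor $\Omega$ and $\Sigma_{K-1}^{(k)}$ with $\emptyset$ in $E(\cdot,\cdot)$ and dividing by the positive measures $|\Omega_0^{(k)}|$ and $|\Omega_{K-1}^{(k)}|$ (guaranteed by \eqref{eqn:seg-meaningful-omega}) yields precisely these two inequalities. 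Together with $0\le m_0^{(k)}$ (as $f\ge0$) they concatenate to the chain in i).

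\emph{Item} ii). By symmetry it suffices to prove the ``$\ge$'' statement, and we may assume $\tau_i^{(k)}>\tau_i^{(k-1)}$ and $\tau_{i+1}^{(k)}>\tau_{i+1}^{(k-1)}$ (an equality collapses the corresponding sets and reduces to a simpler instance). Every set $\Sigma_j$ occurring in the algorithm is the superlevel set $\{u>\tau_j\}$ of the \emph{single} ROF minimizer $u$, so all of these sets are totally ordered by inclusion according to their thresholds. Using $\tau_i^{(k)}=\tfrac12(m_{i-1}^{(k-1)}+m_i^{(k-1)})$ and item~i) at iteration $k-1$ (which gives $m_{i-1}^{(k-1)}\le m_i^{(k-1)}\le\tau_{i+1}^{(k-1)}$), we get $\tau_i^{(k)}\le m_i^{(k-1)}\le\tau_{i+1}^{(k-1)}$, hence
\[
\tau_i^{(k-1)}<\tau_i^{(k)}\le\tau_{i+1}^{(k-1)}<\tau_{i+1}^{(k)}.
\]
With $A=\Sigma_i^{(k-1)}$, $B=\Sigma_i^{(k)}$, $C=\Sigma_{i+1}^{(k-1)}$, $D=\Sigma_{i+1}^{(k)}$ this gives $A\supseteq B\supseteq C\supseteq D$, $A\backslash C=\Omega_i^{(k-1)}$ and $B\backslash D=\Omega_i^{(k)}$, so the target $m_i^{(k-1)}\le m_i^{(k)}$ is exactly the conclusion ${\rm mean}_f(A\backslash C)\le{\rm mean}_f(B\backslash D)$ of Lemma~\ref{lem-ass-seq}\,ii), whose hypothesis is ${\rm mean}_f(A\backslash B)\le{\rm mean}_f(B\backslash C)\le{\rm mean}_f(C\backslash D)$.

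Verifying this hypothesis is the step I expect to require most care. The point is that ${\cal C}$ (item ii) of the cleaning procedure) enforces \eqref{eqn:add-rule} not only inside iteration $k$ but also for the mixed nested pairs formed by $(\Sigma_i^{(k)},\tau_i^{(k)})$ and its iteration-$(k-1)$ neighbours. Applying \eqref{eqn:add-rule} to $(\Sigma_i^{(k-1)},\Sigma_i^{(k)})$ gives ${\rm mean}_f(A\backslash B)\le\tau_i^{(k)}$; to $(\Sigma_i^{(k)},\Sigma_{i+1}^{(k-1)})$ gives $\tau_i^{(k)}\le{\rm mean}_f(B\backslash C)\le\tau_{i+1}^{(k-1)}$; and to $(\Sigma_{i+1}^{(k-1)},\Sigma_{i+1}^{(k)})$ gives $\tau_{i+1}^{(k-1)}\le{\rm mean}_f(C\backslash D)$, so that
\[
{\rm mean}_f(A\backslash B)\ \le\ \tau_i^{(k)}\ \le\ {\rm mean}_f(B\backslash C)\ \le\ \tau_{i+1}^{(k-1)}\ \le\ {\rm mean}_f(C\backslash D),
\]
which is the required hypothesis. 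The boundary indices use the conventions $\Sigma_0:=\Omega$, $\Sigma_K:=\emptyset$, $\tau_0:=0$, $\tau_K:=1$: for $i=0$ one has $A=B=\Omega$ and Lemma~\ref{lem-ass-seq}\,ii) still applies, the needed ${\rm mean}_f(B\backslash C)\ge m_0^{(k-1)}$ following from $m_0^{(k-1)}\le\tau_1^{(k-1)}\le{\rm mean}_f(\Sigma_1^{(k-1)}\backslash\Sigma_1^{(k)})$ by i) and \eqref{eqn:add-rule}; for $i=K-1$ one has $C=D=\emptyset$ and applies instead Lemma~\ref{lem-ass-seq}\,i) to $\Sigma_{K-1}^{(k-1)}\supseteq\Sigma_{K-1}^{(k)}\supseteq\emptyset$, together with $\tau_{K-1}^{(k)}\le m_{K-1}^{(k)}$ from i). Finally, the ``$\le$'' half of ii) is obtained by the same argument with the roles of iterations $k$ and $k-1$ interchanged.
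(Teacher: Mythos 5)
Your proof is correct and follows essentially the same route as the paper's: part i) by comparing $\Sigma_1$ and $\Sigma_{K-1}$ with the competitors $\Omega$ and $\emptyset$ in $E(\cdot,\tau)$ together with the mean-between-thresholds property (Lemma \ref{lemma:assump}/criterion \eqref{eqn:add-rule}), and part ii) by chaining the cross-iteration instances of \eqref{eqn:add-rule} into the hypothesis of Lemma \ref{lem-ass-seq} (part ii) for interior indices, part i) for $i=0$ and $i=K-1$). The only cosmetic difference is that you use the update rule $\tau_i^{(k)}=\tfrac12\bigl(m_{i-1}^{(k-1)}+m_i^{(k-1)}\bigr)$ to rule out the ordering $\tau_{i+1}^{(k-1)}\le\tau_i^{(k)}$, which the paper instead treats as a separate, easy case handled directly by part i).
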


\begin{proof} i) At step $k$, let $\tau_i = \tau_i^{(k)}$.
For fixed $0 \le \tau_1 < \tau_2 < \cdots < \tau_{K-1} < 1$, let $\Sigma_i$ be a minimizer of $E(\cdot,\tau_i)$.
With Lemma \ref{lemma:assump}, we only need to prove $m_0 \le \tau_1$ and $\tau_{K-1} \le m_{K-1}$.
Assume that
$\Omega_0 := \Omega \backslash \Sigma_1$ and
$\Omega_{K-1} := \Sigma_{{K-1}}$ have positive measure.

Since $|\Omega_0|, |\Omega_{K-1}|>0$, neither $\Omega$ nor $\emptyset$ is minimizer of $E(\cdot,\tau_i)$,
we verify that
\begin{align}
{\rm Per}(\Sigma_1;\Omega) + \mu \int_{\Sigma_1} (\tau_1 - f) \, dx &\le \mu \int_{\Omega} (\tau_1 -f) \, dx,
\label{help1}
\\
{\rm Per}(\Sigma_{K-1};\Omega) + \mu \int_{\Sigma_{K-1}} (\tau_{K-1} - f) \, dx &\le 0.
\label{help2}
\end{align}
From \eqref{help1}, we have
\begin{equation*}
{\rm Per}(\Sigma_1; \Omega) + \mu \int_{\Sigma_1} (\tau_1 - f) \, dx \le
\mu \int_{ \Omega_0} (\tau_1 - f) \, dx + \mu \int_{\Sigma_1} (\tau_1 - f) \, dx.
\end{equation*}
Note that $m_i := {\rm mean}_f (\Omega_i)$.
Thus, $0 \le \mu \int_{ \Omega_0} (\tau_1 - f) \, dx$ which gives $m_0 \le \tau_1$.
From \eqref{help2}, we have
$0 \ge \int_{ \Omega_{K-1}} (\tau_{K-1} - f) \, dx,$
which means $\tau_{K-1} \le m_{K-1}$.

ii) We only prove for sign $\ge$. The proof for sign $\le$ follows the same lines.

If $\tau_1^{(k)} \ge \tau_1^{(k-1)}$, let $\tau_1 := \tau_{1}^{(k-1)}, \tau_2 := \tau_{1}^{(k)}$, using the conclusion from
Lemma \ref{lem-ass} i), we get
$$
0 \le {\rm mean}_f (\Omega \backslash \Sigma_1^{(k-1)}) \le \tau_{1}^{(k-1)}
\le {\rm mean}_f (\Sigma_1^{(k-1)} \backslash \Sigma_1^{(k)})
\le \tau_{1}^{(k)}.
$$
Hence, from the second implication of Lemma \ref{lem-ass-seq} i), we have $m_0^{(k)} \ge m_0^{(k-1)}$.

If $\tau_{K-1}^{(k)} \ge \tau_{K-1}^{(k-1)}$, let $\tau_1 := \tau_{K-1}^{(k-1)}, \tau_2 := \tau_{K-1}^{(k)}$,
using the conclusion from Lemma \ref{lem-ass} i), we get
$$
0 \le \tau_{{K-1}}^{(k-1)} \le {\rm mean}_f (\Sigma_{K-1}^{(k-1)} \backslash \Sigma_{K-1}^{(k)})
\le \tau_{K-1}^{(k)} \le {\rm mean}_f (\Sigma_{K-1}^{(k)}).
$$
Hence, from the first implication of Lemma \ref{lem-ass-seq} i), we have $m_{K-1}^{(k)} \ge m_{K-1}^{(k-1)}$.

For $i \in \{1,\ldots,K-2 \}$ and if $\tau_i^{(k)} \ge \tau_i^{(k-1)}$
and $\tau_{i+1}^{(k)} \ge \tau_{i+1}^{(k-1)}$, from Lemma \ref{lem-ass} i), we have
$$
\tau_i^{(k-1)} \le \tau_{i+1}^{(k-1)}
\quad {\rm and} \quad
\tau_i^{(k)} \le \tau_{i+1}^{(k)}.
$$
Hence, we can only have one of the following orderings:
\begin{itemize}
\item[a)] $\tau_i^{(k-1)} \le \tau_{i+1}^{(k-1)} \le \tau_i^{(k)} \le \tau_{i+1}^{(k)}$;
\item[b)] $\tau_i^{(k-1)} \le \tau_i^{(k)} \le \tau_{i+1}^{(k-1)} \le \tau_{i+1}^{(k)}$.
\end{itemize}
In case a) we obtain by Lemma \ref{lem-ass} i) that
$$
\tau_i^{(k-1)} \le m_i ^{(k-1)} \le \tau_{i+1}^{(k-1)} \le \tau_i^{(k)} \le m_i^{(k)}.
$$
In case b) we conclude by Lemma \ref{lem-ass} i) with the settings
$\tau_1 := \tau_i^{(k-1)}, \tau_2 := \tau_i^{(k)}$
and
$\tau_1 := \tau_i^{(k)}, \tau_2 := \tau_{i+1}^{(k-1)}$
and
$\tau_1 := \tau_{i+1}^{(k-1)}, \tau_2 := \tau_{i+1}^{(k)}$, respectively,
\begin{align*}
\tau_i^{(k-1)} \le& {\rm mean}_f (\Sigma_i^{(k-1)} \backslash \Sigma_{i}^{(k)})
\le \tau_i^{(k)}
\le {\rm mean}_f (\Sigma_i^{(k)} \backslash \Sigma_{i+1}^{(k-1)})
\le \tau_{i+1}^{(k-1)} \le \\
\le& {\rm mean}_f (\Sigma_{i+1}^{(k-1)} \backslash \Sigma_{i+1}^{(k)})
\le \tau_{i+1}^{(k)}.
\end{align*}
By Lemma \ref{lem-ass-seq} ii) this implies $m_i^{(k-1)} \le m_i^{(k)}$, which completes the proof.
\end{proof}

It is straightforward to verify that the generated sequences satisfying \eqref{eqn:clean} will 
fulfil the above Lemma \ref{lem-ass}.


To prove the convergence of the sequence $({\zb \tau}^{(k)})_{k}$, we
define a sign sequence
$\zb \zeta^{(k)} = (\zeta_i^{(k)})_{i=1}^{K-1}$ as follows:
If $\tau_i^{(k)} \neq \tau_i^{(k-1)}$,
\begin{gather}\label{tau-sign-general}
\zeta_i^{(k)}:=
\begin{cases}
+1, & \textrm{if}\ \tau_i^{(k)} > \tau_i^{(k-1)}, \\
-1, & \textrm{if}\ \tau_i^{(k)} < \tau_i^{(k-1)},
\end{cases}
\end{gather}
and otherwise
\begin{gather}\label{tau-sign}
\zeta_i^{(k)}:=
\begin{cases}
\zeta_j^{(k)}, & \textrm{if}\ i=1,\\
\zeta_{i-1}^{(k)}, & \textrm{if}\ i\neq 1,
\end{cases}
\end{gather}
where
$ j := \min\{ l \ | \ \tau_l^{(k)} \neq \tau_l^{(k-1)}\}$.
Additionally, since $\tau_0^{(k)}$ and $\tau_K^{(k)}$ do not change with $k$, we set 
\begin{equation} \label{eqn:z0zk}
\zeta_0^{(k)} := \zeta_1^{(k)}, \quad \zeta_K^{(k)} := \zeta_{K-1}^{(k)}.
\end{equation}
By $s_k$ we denote the number of sign changes in ${\zb \zeta}^{(k)}$.
Writing for simplicity $\pm$ instead of $\pm 1$, we obtain for example that
$$
\zb \zeta^{(k)} = \{\zeta_i^{(k)}\}_{i=1}^{K-1} = 
 (+  + | - |+ | -|   +  + + |-  -|  +| -| +|  - - |+ |-   |++| ---| +  +| - |+ |-)
$$
has $s_k = 17$.

\begin{lemma} \label{lem-sign}
{\rm i)} The number of sign changes $s_k$ is monotone decreasing in $k$. \\
{\rm ii)} If $\zeta_1^{(k+1)} \neq \zeta_1^{(k)}$, then we have the strict decrease $s_{k+1} < s_{k}$.
\end{lemma}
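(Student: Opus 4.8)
The plan is to pass from the threshold sequences to the increments produced by one step of the iteration and then read off the sign pattern from the monotonicity Lemma~\ref{lem-ass}~ii).

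First I would reformulate $s_{k+1}$. Fix $k\ge 1$ and set $\delta_i:=m_i^{(k)}-m_i^{(k-1)}$ ($i=0,\dots,K-1$) and $D_i:=\tau_i^{(k+1)}-\tau_i^{(k)}$ ($i=1,\dots,K-1$). The update rule \eqref{trof-thd} gives $\tau_i^{(k+1)}=\tfrac12(m_{i-1}^{(k)}+m_i^{(k)})$ and $\tau_i^{(k)}=\tfrac12(m_{i-1}^{(k-1)}+m_i^{(k-1)})$, hence $D_i=\tfrac12(\delta_{i-1}+\delta_i)$, a positive combination of two consecutive $\delta$'s. Writing $S^-(x)$ for the number of sign changes of the subsequence of nonzero entries of a finite sequence $x$, I would note, from the sign-filling rules \eqref{tau-sign-general}--\eqref{tau-sign} and the convention \eqref{eqn:z0zk}, that $\zeta^{(k+1)}$ is obtained from the sign sequence of $(D_1,\dots,D_{K-1})$ by replacing each $0$ by the sign of the nearest nonzero entry to its left (or, if none exists, to its right) and then duplicating the two ends, operations that do not change the number of sign changes; thus $s_{k+1}=S^-(D_1,\dots,D_{K-1})$. (If all $D_i=0$ the iteration is at a fixed point and nothing is to prove.) Next, $S^-(D)\le S^-(\delta)$: given a maximal alternating subsequence $D_{j_1},D_{j_2},\dots$ of the nonzero $D_i$'s, choosing for each $j_t$ the member of $\{\delta_{j_t-1},\delta_{j_t}\}$ that carries the sign of $D_{j_t}$ produces an equally long alternating subsequence of $(\delta_i)$ at strictly increasing indices. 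So it remains to bound $S^-(\delta)$ by $s_k$.

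For this, let $R_1,\dots,R_r$ be the maximal constant runs of $\zeta^{(k)}$ (so $r-1=s_k$), with signs $s^{(1)}\ne s^{(2)}\ne\cdots$, and let $b_1<\dots<b_{r-1}$ be the indices at which $\zeta^{(k)}$ changes sign (so $b_j$ is the right endpoint of $R_j$). If $i\notin\{b_1,\dots,b_{r-1}\}$, i.e.\ $\zeta_i^{(k)}=\zeta_{i+1}^{(k)}=s$, then \eqref{tau-sign-general}--\eqref{tau-sign} force $\tau_i^{(k)}-\tau_i^{(k-1)}$ and $\tau_{i+1}^{(k)}-\tau_{i+1}^{(k-1)}$ to be both $\ge 0$ when $s=+1$ and both $\le 0$ when $s=-1$ -- the conventions $\tau_0\equiv0$, $\tau_K\equiv1$, $\zeta_0=\zeta_1$, $\zeta_K=\zeta_{K-1}$ covering $i=0$ and $i=K-1$ -- so Lemma~\ref{lem-ass}~ii) yields $s\,\delta_i\ge 0$; at the $r-1$ exceptional indices $b_j$ the sign of $\delta_{b_j}$, whatever it is, equals $s^{(j)}$, $s^{(j+1)}$ or $0$, because consecutive runs have opposite signs. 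Hence, after deleting zeros, $(\delta_0,\dots,\delta_{K-1})$ is a concatenation of at most $r$ sign-constant blocks -- the entries coming from $R_j$ all have sign $s^{(j)}$, and each $\delta_{b_j}$ merges into the block of $R_j$ or that of $R_{j+1}$ -- so $S^-(\delta)\le r-1=s_k$. Chaining the inequalities gives $s_{k+1}=S^-(D)\le S^-(\delta)\le s_k$, which is i). For ii), suppose $\zeta_1^{(k+1)}\ne\zeta_1^{(k)}=:\sigma$, so the first nonzero $D_i$ has sign $-\sigma$. If $\delta_0\ne0$, then $\mathrm{sgn}(\delta_0)=\sigma$ by the previous step, and prepending the index $0$ to a maximal alternating subsequence of the nonzero $D_i$'s that begins at the first one gives an alternating subsequence of $(\delta_i)$ one longer, so $S^-(\delta)\ge S^-(D)+1$ and hence $s_{k+1}=S^-(D)\le s_k-1$. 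If $\delta_0=0$, let $\delta_t$ be the first nonzero $\delta_i$; then $D_1=\dots=D_{t-1}=0$ and $D_t=\tfrac12\delta_t$, so $\mathrm{sgn}(\delta_t)=\zeta_1^{(k+1)}=-\sigma$, which by Lemma~\ref{lem-ass}~ii) cannot happen if $t$ is interior to $R_1$; thus the block coming from $R_1$ is empty, the nonzero $\delta$'s occupy at most $r-1$ sign-constant blocks, $S^-(\delta)\le r-2=s_k-1$, and again $s_{k+1}\le s_k-1$.

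The step I expect to be the main obstacle is the careful bookkeeping around degenerate configurations: runs $R_j$ of length $1$ or $2$, whose interior is empty so that the whole block is carried by the single boundary entry $\delta_{b_j}$; indices at which $D_i$ accidentally vanishes, which must be resolved through the sign-filling rule and shown not to create new sign changes; and matching the indices where $\delta_i$ is unconstrained exactly with the run endpoints $b_j$. Once these are organised, the chain $s_{k+1}=S^-(D)\le S^-(\delta)\le s_k$, together with its sharpening in the case $\zeta_1^{(k+1)}\ne\zeta_1^{(k)}$, follows quickly.
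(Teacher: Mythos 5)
Your proof is correct, but it takes a genuinely different route from the paper's. You linearize the iteration: with $\delta_i=m_i^{(k)}-m_i^{(k-1)}$ and $D_i=\tau_i^{(k+1)}-\tau_i^{(k)}=\tfrac12(\delta_{i-1}+\delta_i)$, both claims reduce to the chain $s_{k+1}=S^-(D)\le S^-(\delta)\le s_k$, where the first inequality is a discrete variation-diminishing property of the two-term average and the second invokes Lemma \ref{lem-ass}~ii) exactly once, to force $\zeta_i^{(k)}\,\delta_i\ge 0$ at every index that is not a sign-change position of $\zeta^{(k)}$, so that the nonzero $\delta$'s organize into at most $s_k+1$ sign-constant blocks with non-decreasing run labels. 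The paper instead works directly on the sign sequence: it decomposes $\zeta^{(k)}$ into sign blocks, treats non-oscillating subsequences (blocks of length $\ge 2$) and oscillatory subsequences (runs of singleton blocks) separately, shows via Lemma \ref{lem-ass}~ii) that interior signs of long blocks persist unless a whole prefix flips, that adjacent block-boundary signs cannot both flip, and that an osc-sequence cannot flip entirely, and then concludes by counting how blocks can merge; its part ii) comes from the first block flipping as a whole and merging into the next. Your version buys compactness and rigor --- it replaces the paper's case discussion and its informal ``counting arguments'' by two clean inequalities, and part ii) drops out either by prepending $\delta_0$ to an alternating subsequence of the $D$'s (note the prepended index $0$ is automatically smaller than the first chosen index, since $\delta_0$ has the wrong sign to be selected for $D_{j_1}$) or, when $\delta_0=0$, by observing that the first $\delta$-block is empty so only $r-1$ labels survive. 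The bookkeeping items you flag do all close: the chosen indices in the step $S^-(D)\le S^-(\delta)$ are strictly increasing because consecutive chosen $D$'s have opposite signs and hence cannot select the same $\delta$; the zero-filling rules \eqref{tau-sign-general}--\eqref{tau-sign} together with \eqref{eqn:z0zk} leave the sign-change count equal to that of the nonzero $D$'s; and the ``at most $r$ blocks'' claim survives empty or partly zero interiors because the run label of each nonzero $\delta$ (with each $\delta_{b_j}$ absorbed into $R_j$ or $R_{j+1}$ according to its sign) is non-decreasing along the sequence. What the paper's block analysis buys in exchange is structural information about how the sign pattern itself evolves from step to step, which your more economical counting argument does not exhibit.
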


\begin{proof} i) Let $s_k = N -1$
and consider the sequence
\begin{equation} \label{whole_seq}
\underbrace{
{\zeta_0^{(k)}}, \cdots, \zeta_{l_1}^{(k)}\vert}_{v_1^{(k)}} \, \cdots \, ,
\underbrace{\vert \zeta_{i_j}^{(k)}, \cdots, \zeta_{l_{j}}^{(k)}\vert}_{v_j^{(k)}}
\, \cdots \, ,
\underbrace{\vert \zeta_{i_N}^{(k)}, \cdots, {\zeta_{K}^{(k)}}}_{v_N^{(k)}},
\end{equation}
where $v_j^{(k)}$, named a sign block, contains those successive components with the same sign, and $\zeta_0^{(k)}$ and $\zeta_K^{(k)}$
(defined in \eqref{eqn:z0zk}) are used for the boundary elements of the whole sequence
which by definition  do not belong to ${\zb \zeta}^{(k)}$ (see \eqref{tau-sign-general} and \eqref{tau-sign}). 
Note that $\#v_1^{(k)}\ge 2$ and $\#v_N^{(k)}\ge 2$.
\\

1. For $j\ge 2$, if $\#v_j^{(k)}\ge 3$, we consider $\zeta_{i^*}^{(k+1)}$ with $i_j\le i^*-1 < i^* < i^*+1 \le l_j$, i.e.,
$\zeta_{i^*-1}^{(k)} = \zeta_{i^*}^{(k)} = \zeta_{i^*+1}^{(k)}$.
WLOG let $\zeta_{i^*}^{(k)} = -1$.
Then we obtain by Lemma \ref{lem-ass} ii) that
$m_{i^*-1}^{(k)} \leq m_{i^*-1}^{(k-1)}$ and $m_{i^*}^{(k)} \leq m_{i^*}^{(k-1)}$.
Therefore
\[
\tau_{i^*}^{(k+1)} = \frac{m_{i^*-1}^{(k)} + m_{i^*}^{(k)}}{2} \leq \frac{m_{i^*-1}^{(k-1)} + m_{i^*}^{(k-1)}}{2}
= \tau_{i^*}^{(k)}
\]
and consequently $\zeta_{i^*}^{(k+1)} = -1$,
 or
$\zeta_{i_j}^{(k+1)} = \zeta_{i_j+1}^{(k+1)} = \cdots = \zeta_{i^*}^{(k+1)} =1$ (which is obtained by definition
\eqref{tau-sign} when $\tau_{i^*}^{(k+1)} = \tau_{i^*}^{(k)}$).

Specifically, for $j = 1$, if $\#v_1^{(k)}\ge 3$, we consider $\zeta_{i^*}^{(k+1)}$ with $0\le i^*-1 < i^* < i^*+1 \le l_1$.
Following the same lines above, we have, if $\zeta_{i^*}^{(k)} = -1$,
then $\zeta_{i^*}^{(k+1)} = -1$,
or
$\zeta_{0}^{(k+1)} = \zeta_{1}^{(k+1)} = \cdots = \zeta_{i^*}^{(k+1)} =1$. Moreover, in the latter case, we can also prove 
$ \zeta_{i^*}^{(k+1)} = \zeta_{i^*+1}^{(k+1)} = \cdots = \zeta_{l_1}^{(k+1)}  =1$,
using the definition in \eqref{tau-sign} for $\zeta_1^{(k+1)}$ when $\tau_1^{(k+1)} = \tau_1^{(k)}$
(which is just opposite to the way of defining $\zeta_i^{(k+1)}$ when  $\tau_i^{(k+1)} = \tau_i^{(k)}$ for $i>1$).
The case $\zeta_{i^*}^{(k)} = 1$ can be handled in the same way.

Therefore we obtain for a sequence $v_j^{(k)}$ with $\#v_j^{(k)}\ge 3$ the following properties from step $k$ to step $k+1$:
\begin{itemize}
\item[-] no sign changes if the boundary signs are kept;
\item[-] maximal one additional sign change if the left or right boundary sign is kept;
\item[-] maximal two additional sign changes if both boundary signs are not kept.
\end{itemize}
Here is an example:
\begin{align*}
-|++++|- \quad \rightarrow \quad & |++++|,  |+++-|, 
  |---+|, |--++|, \\
	& |-++-|, |--+-|, |----|. 
\end{align*}

2. Assume that there is a sequence  in \eqref{whole_seq} with no $j$ such that $\#v_j^{(k)} = 1$.
Consider
$\zeta_{l_j}^{(k)}$ and $\zeta_{i_{j+1}}^{(k)}$ which are different by definition.
WLOG, let
$\zeta_{l_{j}}^{(k)} = -1$ and $\zeta_{i_{j+1}}^{(k)} = +1$.
Then $\zeta_{l_{j} -1}^{(k)} = -1$ and $\zeta_{i_{j+1} +1}^{(k)} = +1$. From Lemma \ref{lem-ass} ii), we have
\[
m_{l_{j}-1}^{(k)} \leq m_{l_{j}-1}^{(k-1)}, \quad m_{i_{j+1}}^{(k)} \geq m_{i_{j+1}}^{(k-1)}.
\]
If
$m_{l_j}^{(k)} < m_{l_j}^{(k-1)}$ (or $m_{l_j}^{(k)} \geq m_{l_j}^{(k-1)}$),
then
$\tau_{l_j}^{(k+1)} < \tau_{l_j}^{(k)}$ (or $\tau_{i_{j+1}}^{(k+1)} \geq \tau_{i_{j+1}}^{(k)}$).
This means that
$\zeta_{l_{j}}^{(k+1)} \neq \zeta_{l_{j}}^{(k)}$
and
$\zeta_{i_{j+1}}^{(k+1)}\neq \zeta_{i_{j+1}}^{(k)}$ are not possible at the same time.

Let us call a subsequence containing consecutive sign blocks $\#v_j^{(k)} \ge 2$ for all $j$ within the whole sequence
a non-oscillating subsequence, short nosc-sequence.
Then we can use the  above arguments to show that the number of sign changes  for an inner nosc-sequence from step $k$ to step $k+1$ 
behaves as follows:
\begin{itemize}
\item[-] no additional sign changes if the boundary signs are kept;
\item[-] maximal one additional sign change if the left or right boundary sign is kept;
\item[-] maximal two additional sign changes  if both boundary signs are not kept.
\end{itemize}
Here is an example of the sign changes of a nosc-sequence containing two sign blocks for the first two boundary conditions:
\begin{align*}
|+++---|  \ \  \rightarrow  \ \  
& 
|+++ -- -|,  |{ +}++ +- -|, |+ ++ ++  -|,
 |++- -- -|,\\
&
|+++ - - +|,    |+++ +-{ +}|,  | +++ +++| ,  |++-  -- +|,\\
&
|- ++ - - -|, | - -+ - - -|, |- - + ++ -|, | - -+ + - -|, \\
&|- +++ - -|,|- - ++ + -| , |- + - - - -| , |- - - - - -|.
\end{align*}

In particular, accompanying the proof in part 1, we obtain that, if the whole sequence is a nosc-sequence, the sign changes of the whole sequence 
will not increase.

3. Now, we consider the case $\# v_j^{(k)} = 1$ for all $j_1 \le j \le j_2$, where
$\#v_{j_1 - 1}^{(k)} > 1$ and $\#v_{j_2 + 1}^{(k)} > 1$. 
We prove that from step $k$ to step $k+1$ the signs of
\begin{equation} \label{eqn:osc-se}
\zeta_{i_{j_1}-1}^{(\cdot)}, \  \zeta_{i_{j_1}}^{(\cdot)}, \ \ldots, \  \zeta_{i_{j_2}}^{(\cdot)}, \  \zeta_{i_{j_2}+1}^{(\cdot)}
\end{equation}
can not change at the same time. 
We call the above sequence \eqref{eqn:osc-se} oscillatory subsequence, short osc-sequence.
Here is an example, where the osc-sequence is underbraced and the inner pattern overbraced:
\begin{equation} \label{eqn:osc-se-example}
 \ldots + \underbrace{+|\overbrace{- + - +}|-} - \ldots, \quad  \mathrm{or} \quad \ldots + \underbrace{+|\overbrace{- + - + -  }|+} + \ldots.
\end{equation}
WLOG assume that
$\zeta_{i_{j_1}}^{(k)}=-1$ so that $\zeta_{i_j}^{(k)}=(-1)^{j-j_1+1}$ for
$j_1\le j \le j_2$, and $\zeta_{i_{j_1}-1}^{(k)} = \zeta_{i_{j_1}-2}^{(k)}=+1$ and
$\zeta_{i_{j_2}+1}^{(k)} = \zeta_{i_{j_2}+2}^{(k)}=(-1)^{j_2-j_1}$.
From Lemma \ref{lem-ass} ii), we know that
\begin{gather}\label{eqn-sign-single}
m_{i_{j_1}-2}^{(k)} \ge m_{i_{j_1}-2}^{(k-1)}, \quad {\rm and} \quad
\begin{cases}
m_{i_{j_2}+1}^{(k)} \ge m_{i_{j_2}+1}^{(k-1)} & \textrm{if} \ j_2-j_1 \ \textrm{is even}, \\
m_{i_{j_2}+1}^{(k)} \le m_{i_{j_2}+1}^{(k-1)} & \textrm{if} \ j_2-j_1 \ \textrm{is odd}.
\end{cases}
\end{gather}
Assume that $\zeta_{i_{j_1}-1}^{(\cdot)}$, $\zeta_{i_{j_1}}^{(\cdot)}$, $\cdots$, $\zeta_{i_{j_2}}^{(\cdot)}$,
$\zeta_{i_{j_2}+1}^{(\cdot)}$ all change signs at the same time from step $k$ to step $k+1$, 
e.g. for the left example in \eqref{eqn:osc-se-example}, we have
$$ 
\ldots + + | - + - + | - - \ldots \quad  \rightarrow  \quad  \ldots  - | + - + - |+  \ldots .$$
Then we deduce 
\begin{align*}
\tau_{ i_{j_1-1} }^{(k+1)} 
&= \frac{ m_{i_{j_1-2}}^{(k)} + m_{i_{j_1-1}}^{(k)}}{2 } 
\le 
\tau_{i_{j_1-1}}^{(k)} = \frac{m_{i_{j_1-2}}^{(k-1)} + m_{i_{j_1-1}}^{(k-1)}}{2 }
\end{align*}
and since 
$
m_{i_{j_1-2}}^{(k)}  \ge m_{i_{j_1-2}}^{(k-1)}
$
this implies
$$
m_{j_1 - 1}^{(k)} \le m_{j_1 - 1}^{(k)} .
$$
Taking into account that $\tau_{ i_{j_1} }^{(k+1)} = \tau_{ i_{j_1} }^{(k)}$ would require the same sign $\zeta_{ i_{j_1} }^{(k+1)}$
as in $\zeta_{ i_{j_1-1} }^{(k+1)}$ (see definition \eqref{tau-sign}), we get further
\begin{align*}
\tau_{ i_{j_1} }^{(k+1)} 
&= \frac{ m_{i_{j_1-1}}^{(k)} + m_{i_{j_1}}^{(k)}}{2 } 
>
\tau_{i_{j_1}}^{(k)} = \frac{m_{i_{j_1-1}}^{(k-1)} + m_{i_{j_1}}^{(k-1)}}{2 }
\end{align*}
and since 
$
m_{i_{j_1-1}}^{(k)}  \le m_{i_{j_1-1}}^{(k-1)}
$
this implies
$$
m_{i_{j_1}}^{(k)} > m_{i_{j_1}}^{(k-1)}.
$$
Continuing in this way, we get finally
the contradiction
\[
\begin{cases}
m_{i_{j_2}+1}^{(k)} < m_{i_{j_2}+1}^{(k-1)}, & \textrm{if} \ j_2-j_1 \ \textrm{is even}, \\
m_{i_{j_2}+1}^{(k)} > m_{i_{j_2}+1}^{(k-1)}, & \textrm{if} \ j_2-j_1 \ \textrm{is odd}.
\end{cases}
\]

Then the above arguments show that the number of sign changes  for an osc-sequence from step $k$ to step $k+1$ 
behaves as follows:
\begin{itemize}
\item[-] the number of sign changes cannot increase;
\item[-] the number of sign changes either not change or at least decreases by two if both boundary signs are kept;
\item[-] the number of sign changes at least decreases by two if both boundary signs are not kept;
\item[-] the number of sign changes at least decreases by one if one boundary sign is kept.
\end{itemize}

4. Now we can decompose the sequence $\zb \zeta^{(k)}$ into subsequences with osc-pattern  and nosc-pattern 
(note that the case of a sequence $\zb \zeta^{(k)}$ which is a nosc-sequence has been proved in part 2), 
where these patterns overlap in the boundary signs. 
This is illustrated by an example, where the osc-patterns are underbraced and the nosc-pattern are overbraced:
\begin{center}
$$
\zb + + 
+  - +  -  \overbrace{+   + + - 
 -}   + - +  -
 - + -  \overbrace{ +   + --- + 
  + } - + - \zb -.
$$
\vspace{-41pt}
$$
\phantom{\zb + + \!} 
\underbrace{ \phantom{+  - +  -  +} } 
\phantom{+ + -} 
\underbrace{\phantom{-   + - +  -}  } \phantom{\;}
\underbrace{\phantom{- + -   +} }
\phantom{+ - - - + \;} 
\underbrace{\phantom{+  - +  -}}
\phantom{\zb -}
$$
\end{center}
If nothing changes in the boundary of the osc-pattern,
then the number of sign changes cannot increase for $\zb \zeta^{(\cdot)}$ from step $k$ to step $k-1$.
If both signs in the boundary of an osc-sequence changes, there are at least two sign changes less within the osc-sequence
which can be at most added in the nosc-sequences, but we will not obtain more sign changes for the whole sequence $\zb \zeta^{(\cdot)}$.
If one sign at the osc-boundary is kept and the other is changed we can use counting arguments
to show that the number of sign changes of the whole sequence cannot increase.
This completes the proof.

ii)
If $\zeta_1^{(k+1)} \neq \zeta_1^{(k)}$, we first prove all the components
in $v_1^{(k)}$ must change signs at step $k+1$. 
If $\#v_1^{(k)} = 2$, we get the proof directly since ${\zeta_0^{(k+1)}}$ follows the 
sign of $\zeta_1^{(k+1)}$ according to the definition in \eqref{tau-sign};
if $\#v_1^{(k)} \ge 3$, the proof can be found in the proof of part 1.

By parts 1--4 of the proof, we have block $v_{1}^{(k)}$ will then be merged with 
all or part of $v_{2}^{(k)}$ and have the same sign, where the whole sequence $\zb \zeta^{(\cdot)}$ will 
have one sign change less, since the rest components in $v_{2}^{(k)}$,
if they exist, will be merged with all or part of $v_{3}^{(k)}$, keeping this process until 
merging with the last block $v_{N}^{(k)}$. Therefore we have $s_{k+1} < s_{k}$.
This completes the proof.
\end{proof}

Now we can prove the convergence of our T-ROF algorithm.

\begin{theorem} \label{thm-tau-converge}
The sequence $({\zb \tau}^{(k)})_{k\in \mathbb{N}}$
produced by the T-ROF Algorithm \ref{alg:t-rof} converges to a vector ${\zb \tau}^*$.
Then $({\zb \Sigma}^*,{\zb \tau}^*)$ is a solution of the T-ROF model \eqref{E-general}.
\end{theorem}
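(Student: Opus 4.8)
The plan is, first, to show that every coordinate $\tau_i^{(k)}$ is eventually monotone in $k$, so that the bounded sequence $({\zb \tau}^{(k)})_k$ converges to some ${\zb \tau}^*$; and, second, to pass to the limit in the updating rule \eqref{trof-thd} and in the thresholding characterization of Proposition \ref{pro-rof-trof} to see that the resulting pair $({\zb \Sigma}^*,{\zb \tau}^*)$ meets both requirements \eqref{E-general} and \eqref{multicond} defining the T-ROF model.

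\emph{Convergence of the thresholds.} Since the cleaning step $\mathcal C$ in \eqref{eqn:clean} can only delete phases, the phase number $K$ is non-increasing along the iteration, hence eventually constant; from that point on Lemmas \ref{lem-ass} and \ref{lem-sign} apply to sequences of fixed length. By Lemma \ref{lem-sign} i), the number of sign changes $(s_k)_k$ is a non-increasing sequence of non-negative integers, so $s_k = s^*$ for all $k \ge k_0$. Lemma \ref{lem-sign} ii) then forces $\zeta_1^{(k+1)} = \zeta_1^{(k)}$ for $k \ge k_0$, which by the conventions \eqref{tau-sign-general}--\eqref{tau-sign} means that $(\tau_1^{(k)})_{k \ge k_0}$ is monotone; being contained in $[0,1]$ it converges to some $\tau_1^*$. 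I then propagate to the right: monotonicity of $(\tau_1^{(k)})_k$ makes the superlevel sets $\Sigma_1^{(k)} = \{x\in\Omega : u(x) > \tau_1^{(k)}\}$ converge monotonically, so by dominated convergence $m_0^{(k)} = {\rm mean}_f(\Omega \setminus \Sigma_1^{(k)})$ converges, say to $m_0^*$; since $\tau_1^{(k+1)} = \frac12(m_0^{(k)}+m_1^{(k)})$ converges as well, so does $(m_1^{(k)})_k$. Combining this observation with the interlacing $m_{i-1}^{(k)} \le \tau_i^{(k)} \le m_i^{(k)}$ of Lemma \ref{lem-ass} i), the one-sided monotonicity of $(m_i^{(k)})_k$ along sign-consistent stretches supplied by Lemma \ref{lem-ass} ii), and the sign-block bookkeeping underlying Lemma \ref{lem-sign} (once $s_k$ is frozen no coordinate can oscillate forever among the $s^*$ sign-change slots), one obtains inductively over $i$ that each $(m_i^{(k)})_k$ and each $(\tau_i^{(k)})_k$ is eventually monotone, hence convergent. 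Writing ${\zb \tau}^*:=(\tau_i^*)_i$ and ${\zb m}^*:=(m_i^*)_i$, we get $0\le m_0^* \le \tau_1^* \le m_1^* \le \cdots \le m_{K-1}^* \le 1$.

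\emph{Passage to the limit.} For each $i$ the monotone convergence $\tau_i^{(k)}\to\tau_i^*$ makes $\Sigma_i^{(k)}=\{u>\tau_i^{(k)}\}$ converge monotonically to a set $\Sigma_i^*$ (namely $\{u>\tau_i^*\}$ if $\tau_i^{(k)}\downarrow\tau_i^*$ and $\{u\ge\tau_i^*\}$ if $\tau_i^{(k)}\uparrow\tau_i^*$), with $\chi_{\Sigma_i^{(k)}}\to\chi_{\Sigma_i^*}$ in $L^1(\Omega)$; the nesting $\Sigma_i^{(k)}\supseteq\Sigma_{i+1}^{(k)}$ from Lemma \ref{tau-sigma-ordering} passes to the limit. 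By Proposition \ref{pro-rof-trof}, $\Sigma_i^{(k)}$ minimizes $E(\cdot,\tau_i^{(k)})$; using lower semicontinuity of the perimeter together with $\int_{\Sigma_i^{(k)}}(\tau_i^{(k)}-f)\,dx \to \int_{\Sigma_i^*}(\tau_i^*-f)\,dx$ (valid since $f$ is bounded and $\chi_{\Sigma_i^{(k)}}\to\chi_{\Sigma_i^*}$ in $L^1$), one concludes that $\Sigma_i^*$ minimizes $E(\cdot,\tau_i^*)$; by separability of $\mathcal E(\cdot,{\zb \tau}^*)$ in \eqref{functional-E-general} over $i$, this is exactly \eqref{E-general}. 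Because the criterion \eqref{eqn:seg-meaningful-omega} is kept throughout, $|\Omega_i^*|=|\Sigma_i^*\setminus\Sigma_{i+1}^*|>0$ (with $\Sigma_0^*:=\Omega$, $\Sigma_K^*:=\emptyset$), so dominated convergence gives $m_i^{(k)}={\rm mean}_f(\Sigma_i^{(k)}\setminus\Sigma_{i+1}^{(k)})\to{\rm mean}_f(\Omega_i^*)=m_i^*$. Letting $k\to\infty$ in $\tau_i^{(k+1)}=\frac12(m_{i-1}^{(k)}+m_i^{(k)})$ yields $\tau_i^*=\frac12(m_{i-1}^*+m_i^*)$, i.e.\ \eqref{multicond}. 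Hence $({\zb \Sigma}^*,{\zb \tau}^*)$ solves the T-ROF model.

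\emph{Main obstacle.} The genuinely hard combinatorial fact — that new sign changes cannot keep being created — has already been isolated in Lemma \ref{lem-sign}; the remaining non-trivial point here is to upgrade ``$s_k$ eventually constant'' into ``every $\tau_i^{(k)}$ converges'', which requires showing that, once the $s^*$ sign-change slots are in place, no single coordinate can keep oscillating, using Lemmas \ref{lem-ass} and \ref{lem-sign} in tandem (effectively peeling off the already-converged leading phases and re-applying Lemma \ref{lem-sign} ii) to what remains). A second, purely analytic subtlety is that the map $\tau\mapsto\{u>\tau\}$ is only monotone, not continuous, precisely at levels $\tau$ with $|\{u=\tau\}|>0$ (which are jumps of the ROF solution, hence of $f$); this is handled by taking monotone set limits and invoking lower semicontinuity of the perimeter, and it is also the reason the statement only asserts that \emph{some} limit configuration $({\zb \Sigma}^*,{\zb \tau}^*)$ — rather than a prescribed one — solves the model.
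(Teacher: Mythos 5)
Your opening step matches the paper: $s_k$ is non-increasing and integer-valued, hence eventually constant, and then Lemma \ref{lem-sign} ii) keeps $\zeta_1^{(k)}$ from changing, so $\tau_1^{(k)}$ is eventually monotone, converges, and drags $m_0^{(k)}$ and $m_1^{(k)}$ along with it. The gap is in the propagation to $i\ge 2$. You rest it on the parenthetical claim that ``once $s_k$ is frozen no coordinate can oscillate forever among the $s^*$ sign-change slots'', i.e. that every $\zeta_i^{(k)}$ is eventually constant and every $\tau_i^{(k)}$ eventually monotone. This does not follow from the lemmas you cite: Lemma \ref{lem-sign} constrains only the total number of sign changes and the first coordinate, and a sign pattern can keep $s_k$ constant while an interior coordinate flips forever (for instance alternating between $(+,+,-,\dots)$ and $(+,-,-,\dots)$, both with one sign change and with $\zeta_1$ untouched). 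Nor can you ``peel off the already-converged leading phases and re-apply Lemma \ref{lem-sign} ii) to what remains'': $\tau_1^{(k)}$ is only convergent, not eventually constant, so the residual iteration on $\Sigma_1^{(k)}$ is a perturbed system to which the lemma does not apply verbatim. As written, the central step of your convergence argument is asserted rather than proved.

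The paper closes exactly this step by a different mechanism, and notably does not prove eventual monotonicity of $\tau_2^{(k)}$ at all. It argues by induction on the number of sign changes $N$: if $\zeta_1$ ever flips, $s_k$ drops to at most $N-1$ and the induction hypothesis applies; otherwise $\tau_1^{(k)}$, $m_0^{(k)}$, $m_1^{(k)}$ converge as above, and one supposes $\tau_2^{(k)}$ has two subsequential limits $\tau_2^*<\tau_2'$, so that $\Sigma_2^*\supseteq\Sigma_2'$ by Lemma \ref{tau-sigma-ordering}. If the inclusion is strict, Lemmas \ref{lemma:assump} and \ref{lem-ass-seq} give ${\rm mean}_f(\Sigma_1\backslash\Sigma_2^*)<{\rm mean}_f(\Sigma_1\backslash\Sigma_2')$, so $m_1^{(k)}$ would have two limit points, contradicting its established convergence; if $\Sigma_2^*=\Sigma_2'$, then $\Sigma_2^{(k)}$ stabilizes and the thresholds $\tau_3^{(k)},\dots,\tau_{K-1}^{(k)}$ restricted to $\Sigma_2^*$ carry at most $N-1$ sign changes, so they converge by the induction hypothesis, whence $m_2^{(k)}$ and then $\tau_2^{(k)}$ converge; this is repeated up to index $K-1$. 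Some argument of this type is what must replace your parenthetical claim. Your final limit-passage paragraph (lower semicontinuity of the perimeter with $L^1$ convergence of characteristic functions, or simply invoking Proposition \ref{pro-rof-trof} at $\tau_i^*$) is sound and in fact more detailed than the paper's treatment of that part, though note that positivity of $|\Omega_i^*|$ in the limit does not follow automatically from \eqref{eqn:seg-meaningful-omega} holding at every finite $k$.
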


\begin{proof}
We prove the assertion by induction on the number of sign changes $s_k$ at some iteration step $k$.

i) Assume that $s_k = 0$. WLOG let
$\zeta_i^{(k)} = +1$, $i= 1, \ldots, K-1$,
i.e.,
$\tau_i^{(k)} \geq \tau_i^{(k-1)}$.
From Lemma \ref{lem-ass} ii), we obtain
$m_i^{(k)} \geq m_i^{(k-1)}$ and consequently $\tau_i^{(k+1)} \geq \tau_i^{(k)}$, $i= 1, \ldots, K-1$.
Therefore $s_{k+1} = 0$ and
$\zeta_i^{(k+1)} = +1$, $i= 1, \ldots, K-1$.
This means that each sequence $(\tau_i^{(k)})_k$ is monotone increasing.
Since the sequences are moreover bounded in [0, 1], we conclude that $({\zb\tau}^{(k)})_k$
converges. Note that $s_k=0$ when $K=2$.

ii) Assume that $({\zb\tau}^{(k)})_k$ converges if
$s_k \leq N-1$ for some $k \in \mathbb N$.

iii) We prove that $({\zb\tau}^{(k)})_k$ converges in case of $s_k = N$.
If there exists a $\hat{k}$ such that
$\zeta_1^{(\hat{k})} \neq \zeta_1^{(\hat{k} -1)}$,
we get $s_{\hat{k}}\leq N-1$ directly from Lemma \ref{lem-sign} ii); therefore
$({\zb\tau}^{(k)})_k$ converges by ii).
If $\zeta_1^{(k+1)} = \zeta_1^{(k)}$ for all $k>\hat k$, then
$(\tau_1^{(k)})_{k>\hat k}$ is monotone and bounded and converges consequently
to some threshold $\tau_1^*$. By the definition of $\Sigma_1^{(k)}$,
we have $\Sigma_1^{({k})}$ converges, and therefore
$m_0^{({k})}$ also converges. Since $\tau_1^{(k)} = (m_0^{({k})} + m_1^{({k})})/2$,
we have $m_1^{({k})}$ converges.
Now we prove $\tau_2^{(k)}$ also converges (here we assume there does not exist a $\hat{k}$
such that $\zeta_2^{(k+1)} = \zeta_2^{(k)}$ for all $k>\hat k$, since otherwise the convergence of
$\tau_2^{(k)}$ is obtained immediately).
If not, $\tau_2^{(k)}$ must contain at least two convergent subsequences converge
to say $\tau_2^*$ and $\tau_2^{'}$. WLOG, let $\tau_2^* < \tau_2^{'}$.
Then we have $\Sigma_2^* \supseteq \Sigma_2^{'}$ from Lemma \ref{tau-sigma-ordering}.
In what follows, we show by contradiction that it is impossible.
\begin{itemize}
\item[-] If $\Sigma_2^* \supset \Sigma_2^{'}$, then
from Lemmas \ref{lemma:assump} and \ref{lem-ass-seq}, we have
${\rm mean}_f (\Sigma_1 \backslash \Sigma_2^*) < {\rm mean}_f (\Sigma_{1} \backslash \Sigma_2^{'})$, which implies $m_1^{({k})}$ diverges.

\item[-] If $\Sigma_2^* = \Sigma_2^{'}$,  then there exists a $\hat{k}$ such that
$\Sigma_2^{(k)} = \Sigma_2^{'}$ for $k > \hat{k}$. Considering thresholds
$\tau_3^{(k)}, \cdots, \tau_{K-1}^{(k)}$ in set $\Sigma_2^*$, the convergence of
them is obtained immediately from ii) by their sign changes which are $\leq N-1$. Therefore
$m_2^{({k})}$ converges, with the convergence of $m_1^{({k})}$, so $\tau_2^{(k)}$ also converges. 
\end{itemize}
Analogously, we can have $\tau_3^{({k})}$ converges from the convergence of
$\tau_2^{(k)}$ and $m_2^{({k})}$.
Repeating this procedure up to the final index $K-1$, we obtain the assertion.
\end{proof}

\section{Numerical Results}\label{sec:numerics}
In this section we test our proposed T-ROF method on many kinds of images. More precisely, we use
the T-ROF Algorithm \ref{alg:t-rof} with a discrete ROF model (see e.g. \cite{Ch04})
whose minimizer is computed numerically by an ADMM algorithm with its inner parameter fixed to 2.
Speedups by using more sophisticated methods will be considered in future work.
The stopping criteria in the T-ROF algorithm for $u$ and $\zb \tau$ are
\begin{equation} \label{eqn:stop-criterion}
\| u^{(i)}-u^{(i-1)} \|_2 / \|u^{(i)}\|_2 \le \epsilon_u 	\quad {\rm and} \quad
\|{\zb \tau}^{(k)} - {\zb \tau}^{(k-1)} \|_2 \le \epsilon_{\tau},
\end{equation}
where $\epsilon_u$ and $\epsilon_{\tau}$ are fixed to $10^{-4}$ and $10^{-5}$,
respectively. The initialization of $\{\tau_i^{(0)}\}_{i=1}^{K-1}$ was computed by
the fuzzy C-means method \cite{BEF84} with 100 iterations.

We compare our method with the recently proposed multiphase segmentation methods
\cite{CCZ13,HHMSS12,LNZS10,PCCB09,YBTB10}. Note that the methods \cite{PCCB09,YBTB10} work with
the fixed fuzzy C-means codebook $\{m_i\}_{i=0}^{K-1}$ which are not updated (all the codebooks used in the
following examples are given in Appendix). Such update is however involved in \cite{HHMSS12}.
The default stopping criterion used in \cite{HHMSS12,LNZS10,PCCB09} is the maximum iteration steps,
which will be shown in each example;
the default stopping criterion used in \cite{CCZ13} is the relative error with tolerance set to $10^{-4}$;
and the default stopping criterion used in \cite{YBTB10} is the same as the one used in \cite{CCZ13},
together with maximum 300 iteration steps.
We choose the regularization parameter,  $\lambda/\mu$ (note that $\lambda$ is used in the PCMS model and $\mu$ is
used in the SaT method \cite{CCZ13} and our T-ROF model), in front of the fidelity term for all the methods
by judging the {\it segmentation accuracy} (SA) defined as
\begin{equation} \label{eqn:sa}
\textrm{SA} := \frac{\# \textrm{correctly classified pixels}}{\# \textrm{all pixels}}
\end{equation}
unless otherwise stated. 
We show the results for two two-phase and five multiphase images. 
Moreover, an extra example -- one three-phase segmentation problem based on retina manual segmentation --
is provided to further demonstrate the superior performance of our proposed method.   
All computations were run on
a MacBook with 2.4 GHz processor and 4GB RAM.

\subsection{Two-phase Image Segmentation}

{\it Example 1. Cartoon image with some missing pixel values.}
Fig. \ref{twophase-circle} (a) is the clean two-phase image, with constant value in each phase.
Fig. \ref{twophase-circle} (b) is the test corrupted image generated by
removing some pixel values (in this example, 80\% pixels are removed) randomly from Fig. \ref{twophase-circle} (a).
Fig. \ref{twophase-circle} (c)--(g) are the results of the methods \cite{LNZS10,PCCB09,YBTB10,HHMSS12,CCZ13}, respectively.
Fig. \ref{twophase-circle} (h)--(j) give the results of our T-ROF method respectively at iterations 1, 2 and 6 (final result)
with $\mu=1$, which clearly shows the
effectiveness of the updating strategy of our T-ROF method on $\zb \tau$ given in \eqref{trof-thd}.
For the T-ROF method, the iteration steps to find $u$ and $\zb \tau$ are 418 and 6 (simply represented as
418 (6) in Table \ref{time-examples-simple}), respectively.
From those results, we see that only
methods \cite{HHMSS12,CCZ13} and our T-ROF method give good results.
The quantitative comparison in terms of $\mu$, iteration steps, computation time, and SA for each method is given
in Table \ref{time-examples-simple}. We can see that the T-ROF method gives the highest SA, which
shows the effectiveness and necessity of updating the threshold $\zb \tau$, compared with the slightly poor results of
the SaT method \cite{CCZ13} which just uses the fixed thresholds selected by K-means.

\begin{figure}[!htb]
\begin{center}
\begin{tabular}{cccc}
\includegraphics[width=\ww, height=\ww]{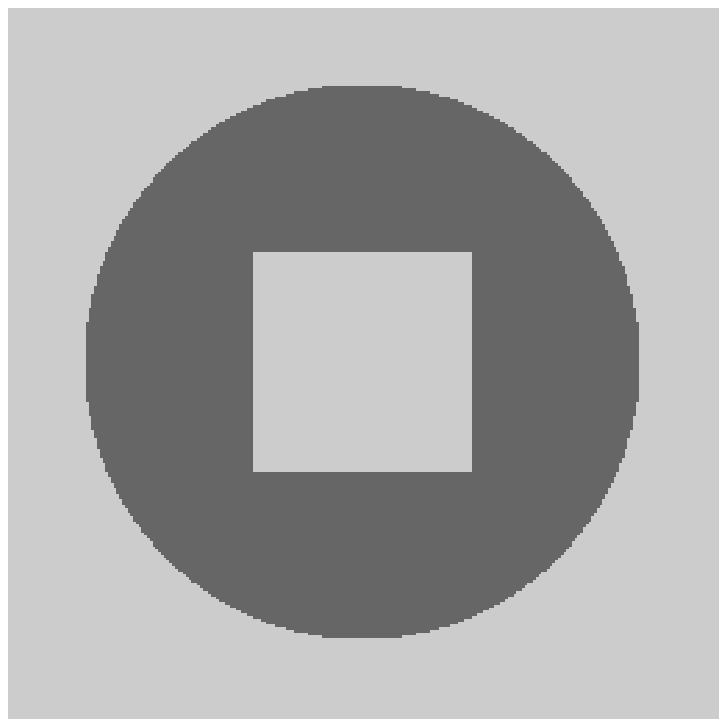} &
\includegraphics[width=\ww, height=\ww]{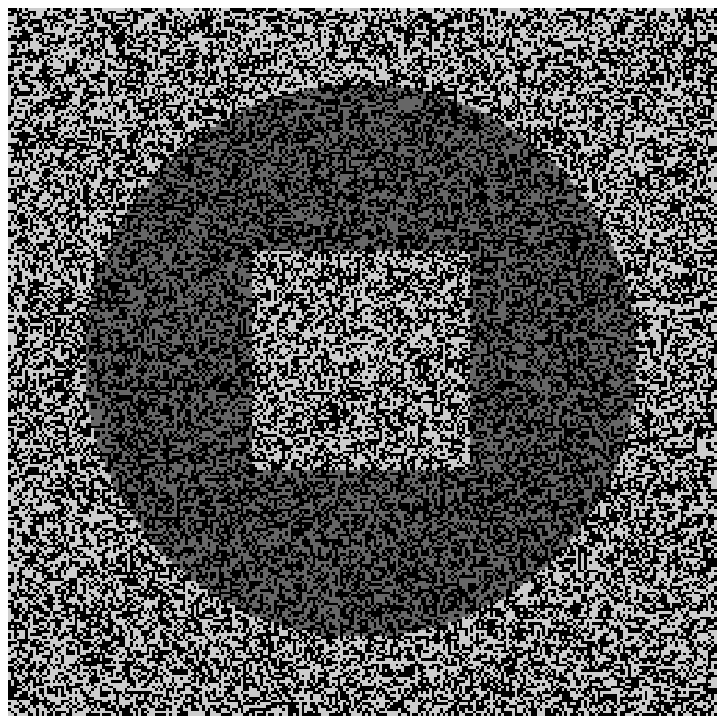} \\
(a) Clean image & (b) Corrupted image
\end{tabular}
\begin{tabular}{cccc}
\includegraphics[width=\ww, height=\ww]{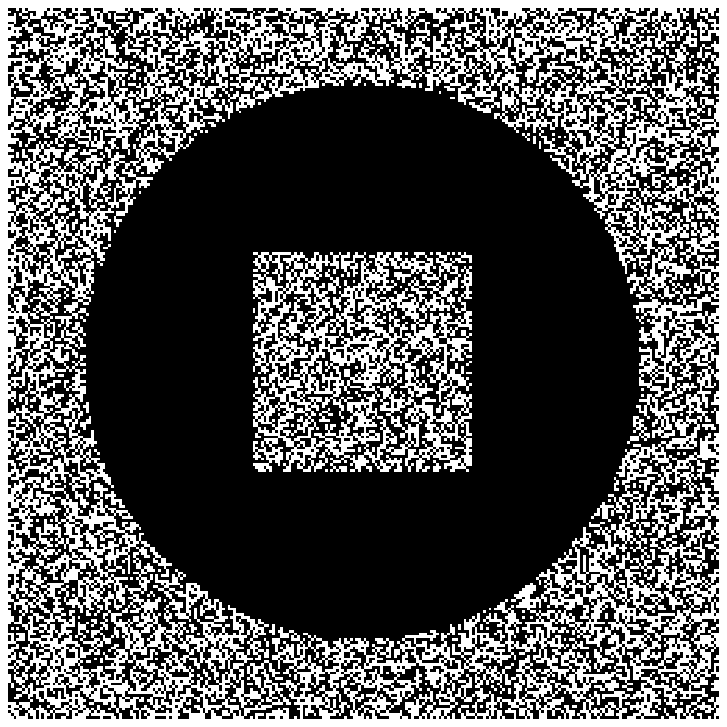} &
\includegraphics[width=\ww, height=\ww]{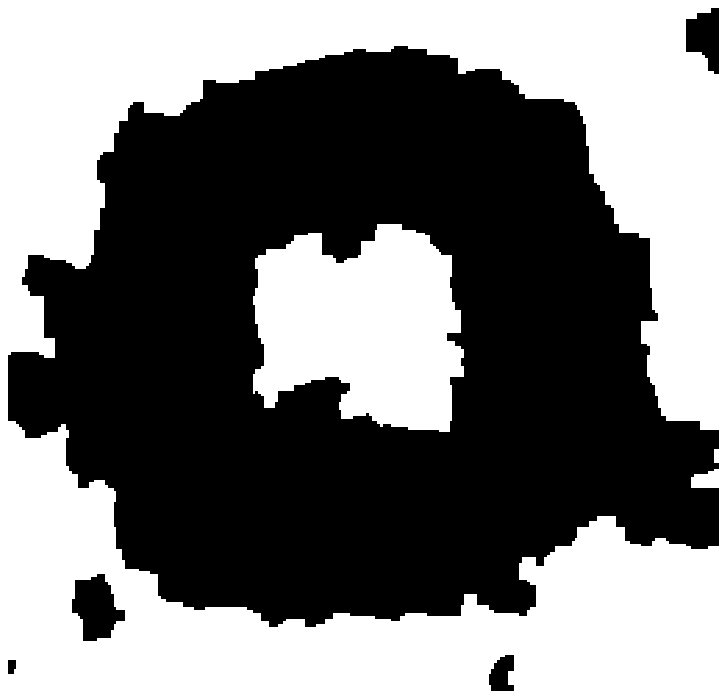} &
\includegraphics[width=\ww, height=\ww]{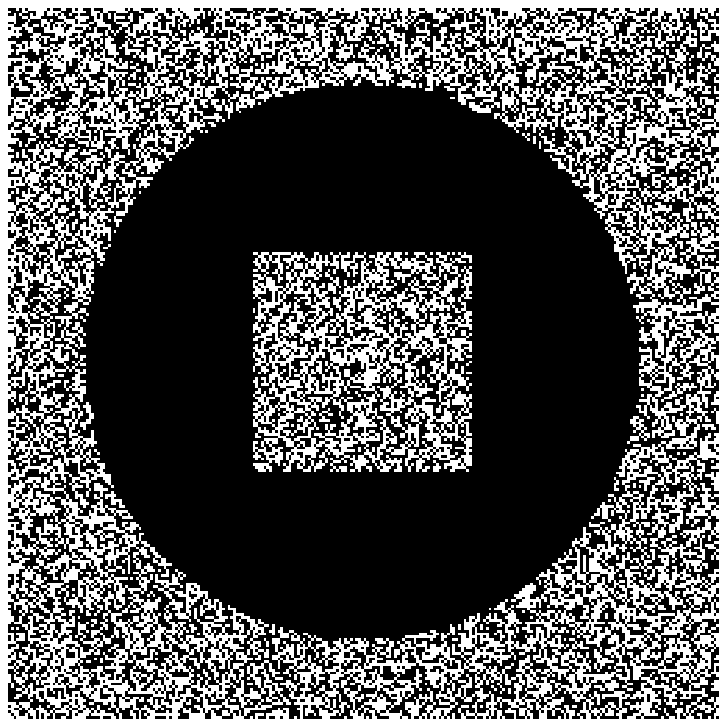} &
\includegraphics[width=\ww, height=\ww]{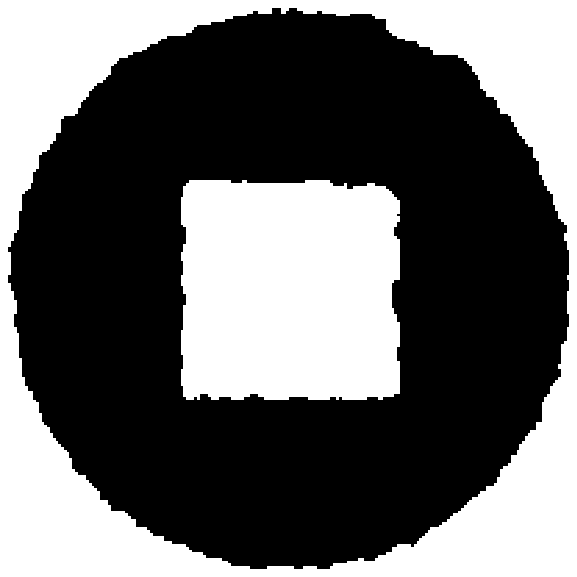} \\
(c) Li \cite{LNZS10} & (d) Pock \cite{PCCB09} & (e) Yuan \cite{YBTB10} & (f) He \cite{HHMSS12} \\
\includegraphics[width=\ww, height=\ww]{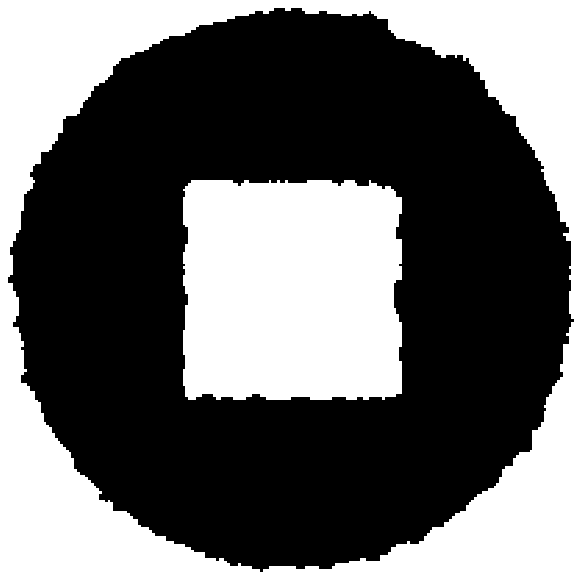} &
\includegraphics[width=\ww, height=\ww]{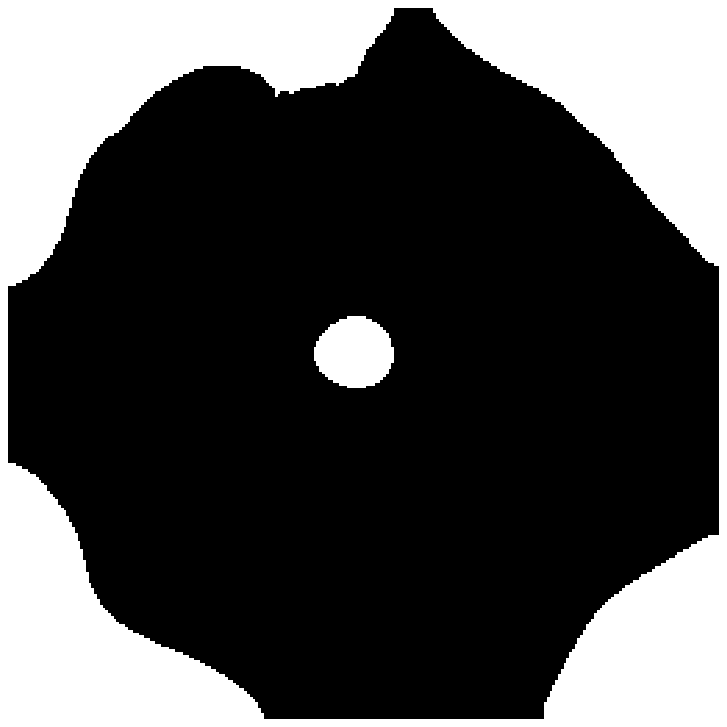} &
\includegraphics[width=\ww, height=\ww]{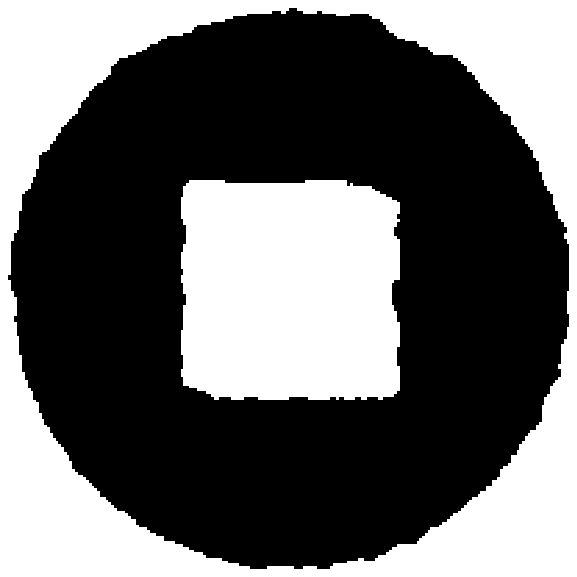} &
\includegraphics[width=\ww, height=\ww]{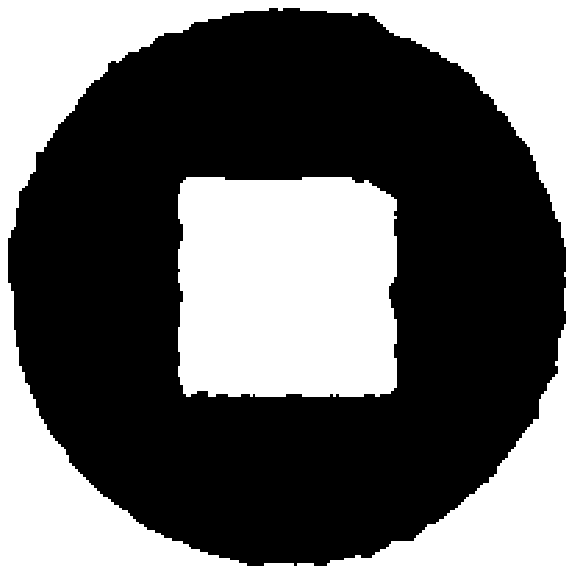} \\
 (g) Cai \cite{CCZ13} & (h) Ours (Ite. 1) & (i) Ours (Ite. 2) & (j) Ours (Ite. 6)
\end{tabular}
\end{center}
\caption{Segmentation of two-phase cartoon image with some missing pixel values (size $256\times 256$).
(a): clean image; (b): image (a) with some pixel values removed randomly;
(c)--(g): results of methods \cite{LNZS10,PCCB09,YBTB10,HHMSS12,CCZ13}, respectively;
(h)--(j): results of our T-ROF method at iterations 1, 2, and 6 (final result), respectively.
}\label{twophase-circle}
\end{figure}

{\it Example 2. Two-phase image with close intensities.}
Fig. \ref{twophase-shape} (a) is an image generated by adding Gaussian noise with mean 0 and
variance $10^{-8}$ onto a constant image with constant value 0.5. Fig. \ref{twophase-shape} (b) is a
two-phase mask separating the whole domain into two parts (the black and the white color parts).
The testing noisy image Fig. \ref{twophase-shape} (c) is generated from Fig. \ref{twophase-shape} (a)
by keeping the pixel values belonging to the white part in the mask and reducing the pixel values
belonging to the black part by a factor of $2\times10^{-4}$.
This way of generating test images has the following two main features:
i) the noise pattern (e.g. Gaussian) presented in the noisy image is locally changed slightly, which 
will make the test more challenging so as to better evaluate the performance of each method and to 
do comparison; ii) the way of lower or increase the intensities of some specified areas can help to generate 
test images easily which contain phases with close intensities, where these generated images  
are good candidates to test the performance of different methods in classifying the close phases.  
Fig. \ref{twophase-shape} (d)--(h) are the results of methods
\cite{LNZS10,PCCB09,YBTB10,HHMSS12,CCZ13}, respectively.
Fig. \ref{twophase-shape} (i)--(j) are the results of our T-ROF method with $\mu=8$ at iterations 1 and 6 (final result),
respectively, which again clearly show the effectiveness of the
updating strategy on $\zb \tau$ given in \eqref{trof-thd}.
Obviously, except method \cite{LNZS10}, all other methods can get good results.
The quantitative results given in Table \ref{time-examples-simple} show that our method is
the fastest and gives the highest SA, which again validates
 the necessity of updating the threshold $\zb \tau$ given in \eqref{trof-thd}
 against the way of obtaining thresholds by K-means used in the SaT method \cite{CCZ13}.

\begin{figure}[!htb]
\begin{center}
\begin{tabular}{ccc}
\includegraphics[width=\ww, height=\ww]{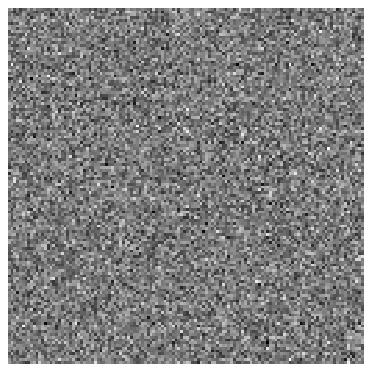} &
\includegraphics[width=\ww, height=\ww]{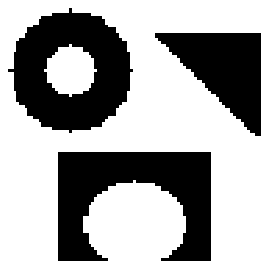} &
\includegraphics[width=\ww, height=\ww]{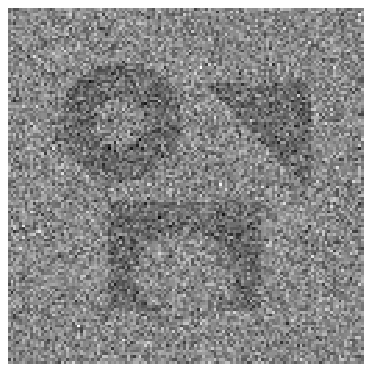} \\
(a) Gaussian noise & (b) Mask & (c) Noisy image\\
\includegraphics[width=\ww, height=\ww]{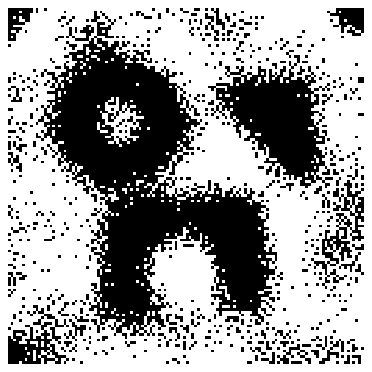} &
\includegraphics[width=\ww, height=\ww]{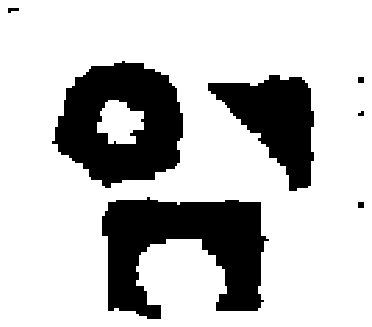} &
\includegraphics[width=\ww, height=\ww]{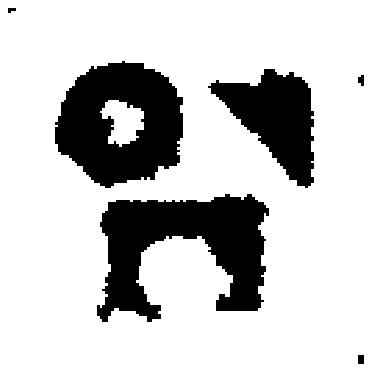} \\
(d) Li \cite{LNZS10} & (e) Pock \cite{PCCB09} & (f) Yuan \cite{YBTB10}
\end{tabular}
\begin{tabular}{cccc}
\includegraphics[width=\ww, height=\ww]{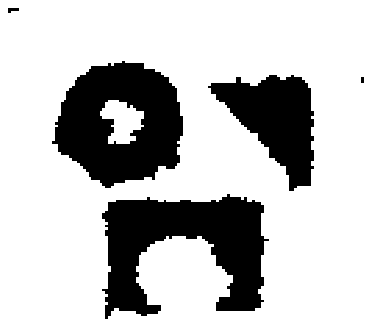} &
\includegraphics[width=\ww, height=\ww]{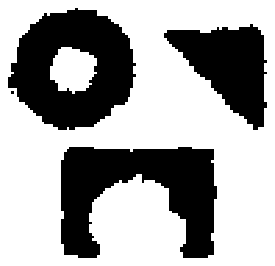} &
\includegraphics[width=\ww, height=\ww]{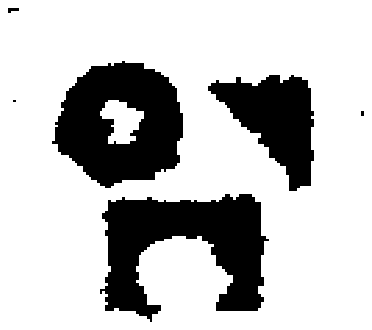} &
\includegraphics[width=\ww, height=\ww]{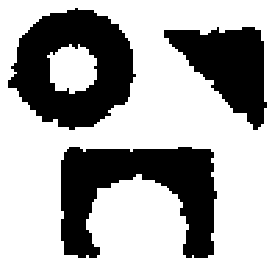} \\
(g) He \cite{HHMSS12} & (h) Cai \cite{CCZ13} & (i) Ours (Ite. 1) & (j) Ours (Ite. 6)
\end{tabular}
\end{center}
\caption{Segmentation of two-phase image with close intensities (size $128\times 128$).
(a): Gaussian noise imposed on a constant image; (b): mask; (c): noisy image generated from (a) and (b);
(d)--(h): results of methods \cite{LNZS10,PCCB09,YBTB10,HHMSS12,CCZ13}, respectively;
(i)--(j): results of our T-ROF method at iterations 1 and 6 (final result), respectively.
}\label{twophase-shape}
\end{figure}

\subsection{Multiphase Image Segmentation}

{\it Example 3. Five-phase noisy image segmentation.}
Fig. \ref{star-five} (a) and (b) are the clean image and Gaussian noisy image with mean 0 and
variance $10^{-2}$. Fig. \ref{star-five} (c)--(h) are the results of methods
\cite{LNZS10,PCCB09,YBTB10,HHMSS12,CCZ13} and our T-ROF method (with $\mu=8$),
respectively. From there results, we see that all the results are very good except the result of method \cite{LNZS10}.
From Table \ref{time-examples-simple}, we can see that the T-ROF method is the fastest.

\begin{figure}[!htb]
\begin{center}
\begin{tabular}{cccc}
\includegraphics[width=\ww, height=\ww]{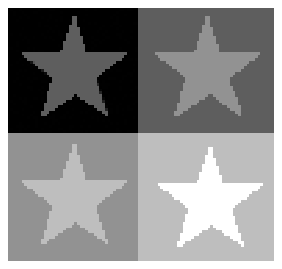} &
\includegraphics[width=\ww, height=\ww]{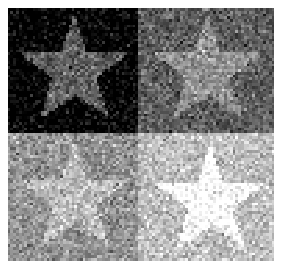} &
\includegraphics[width=\ww, height=\ww]{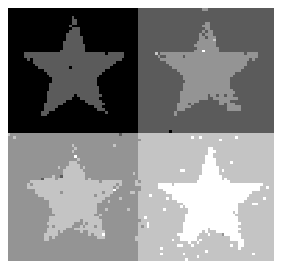} &
\includegraphics[width=\ww, height=\ww]{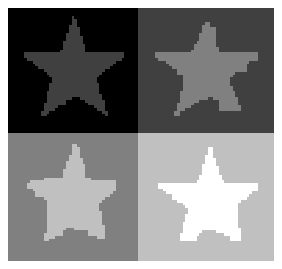} \\
(a) Clean image & (b) Noisy image & (c) Li \cite{LNZS10} & (d) Pock \cite{PCCB09} \\
\includegraphics[width=\ww, height=\ww]{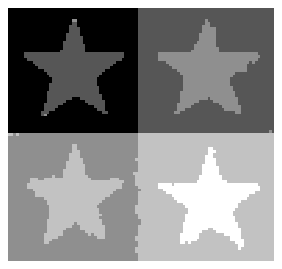} &
\includegraphics[width=\ww, height=\ww]{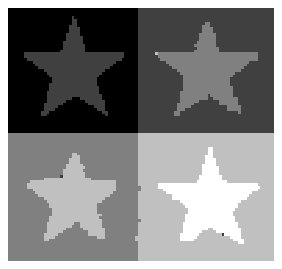} &
\includegraphics[width=\ww, height=\ww]{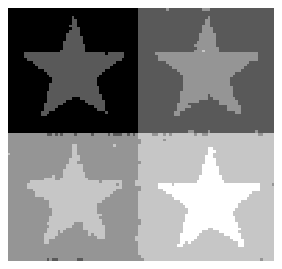} &
\includegraphics[width=\ww, height=\ww]{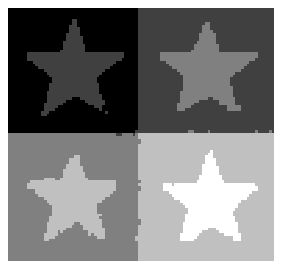} \\
(e) Yuan \cite{YBTB10} & (f) He \cite{HHMSS12} & (g) Cai \cite{CCZ13} & (h) Ours
\end{tabular}
\end{center}
\caption{Five-phase noisy cartoon image segmentation (size $91\times 96$).
(a): clean image; (b): noisy image of (a); (c)--(h): results of methods
\cite{LNZS10,PCCB09,YBTB10,HHMSS12,CCZ13} and our T-ROF method, respectively.
}\label{star-five}
\end{figure}

{\it Example 4. Four-phase gray and white matter segmentation for a brain MRI image.}
In this example, we test the four-phase brain MRI image used in \cite{PCCB09},
see Fig. \ref{brain} (a). The gray and white matter segmentation for this kind of image is
very important in medical imaging. Fig. \ref{brain} (b)--(g) are the results of methods
\cite{LNZS10,PCCB09,YBTB10,HHMSS12,CCZ13} and our T-ROF method (with $\mu=40$), respectively.
We can see that all the methods work well for this kind of image. In particular, the T-ROF method with 11 $\zb\tau$-value
updates is faster than other methods, e.g., three times faster than the algorithm of Pock et al. \cite{PCCB09}
with assigned parameters. Note that the SaT method \cite{CCZ13} is also very fast, due to the fact that it is akin to the
T-ROF method.

\begin{figure}[!htb]
\begin{center}
\begin{tabular}{ccc}
\includegraphics[width=\ww, height=35mm]{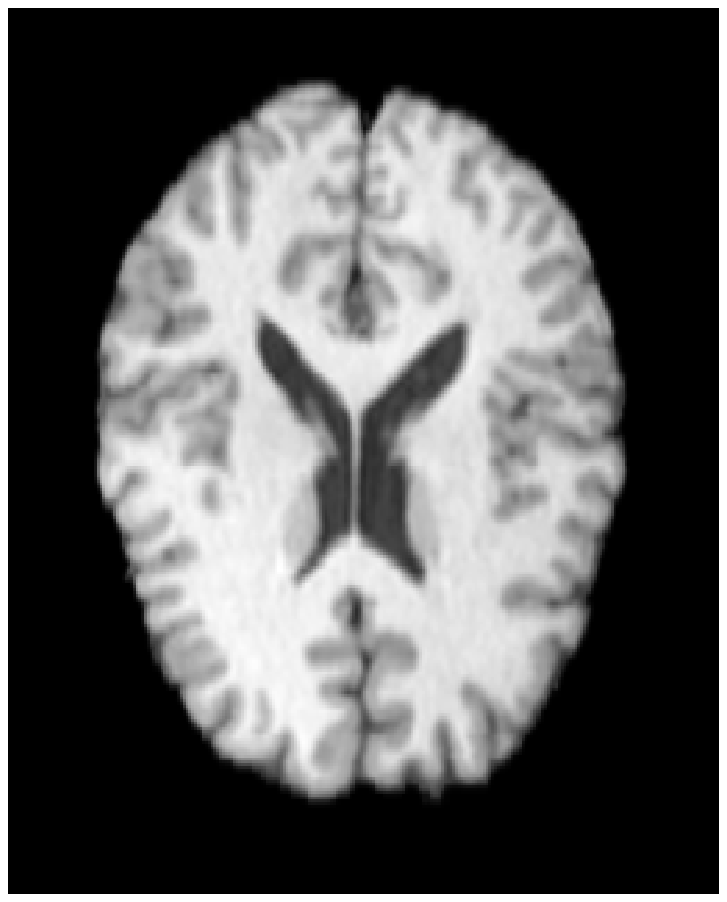} &
\includegraphics[width=\ww, height=35mm]{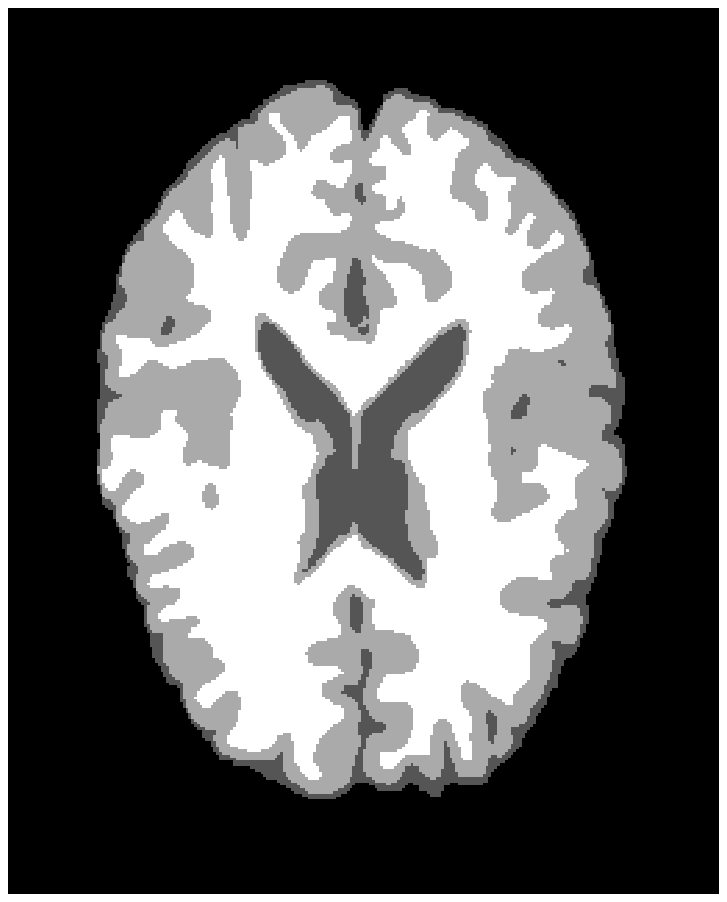} &
\includegraphics[width=\ww, height=35mm]{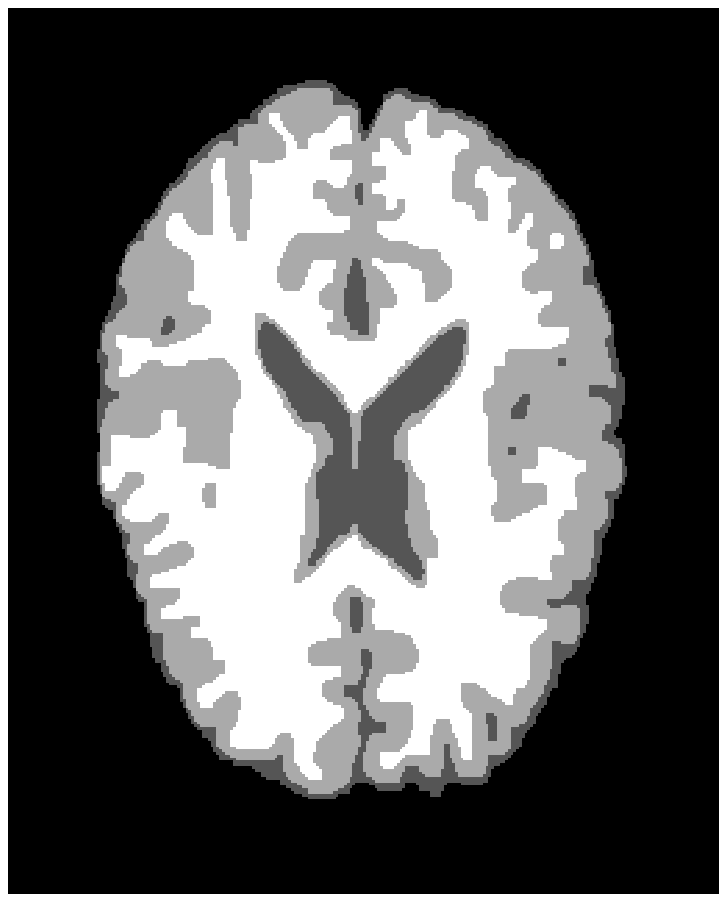} \\
(a) Given image & (b) Li \cite{LNZS10} & (c) Pock \cite{PCCB09}
\end{tabular}
\begin{tabular}{cccc}
\includegraphics[width=\ww, height=35mm]{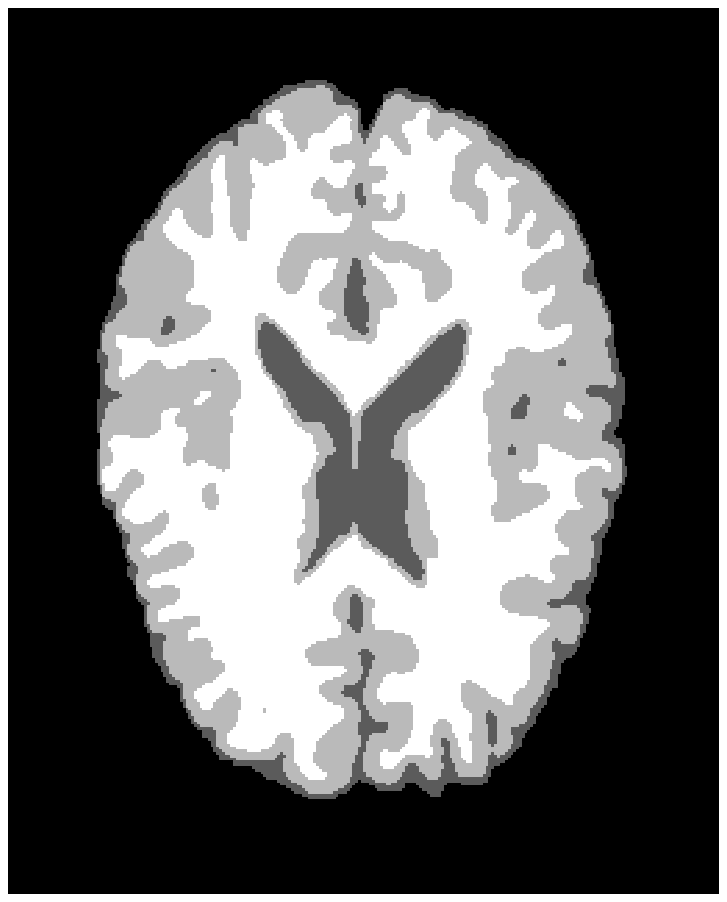} &
\includegraphics[width=\ww, height=35mm]{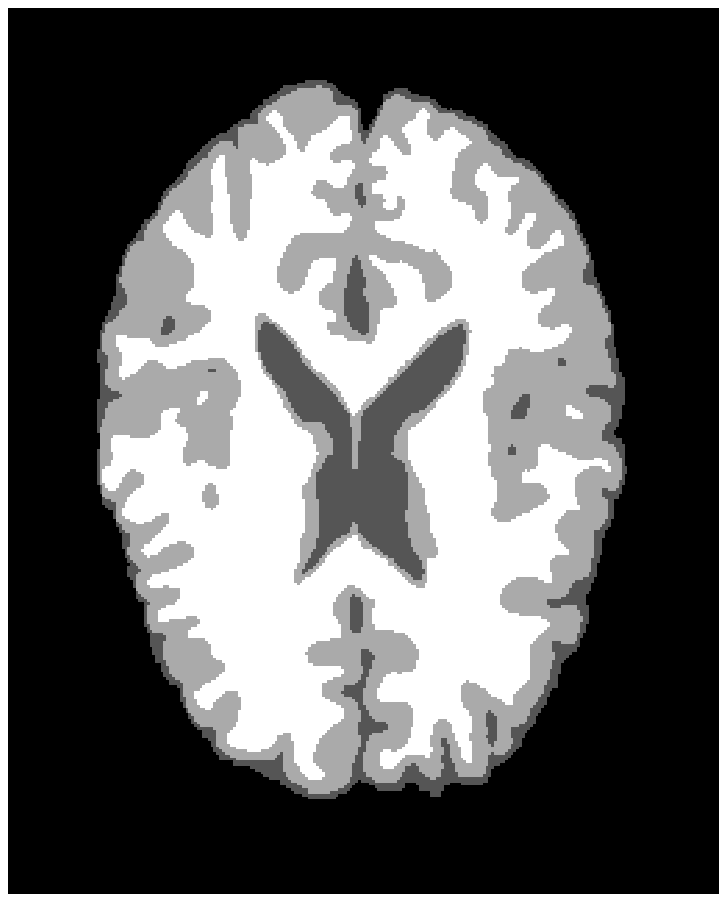} &
\includegraphics[width=\ww, height=35mm]{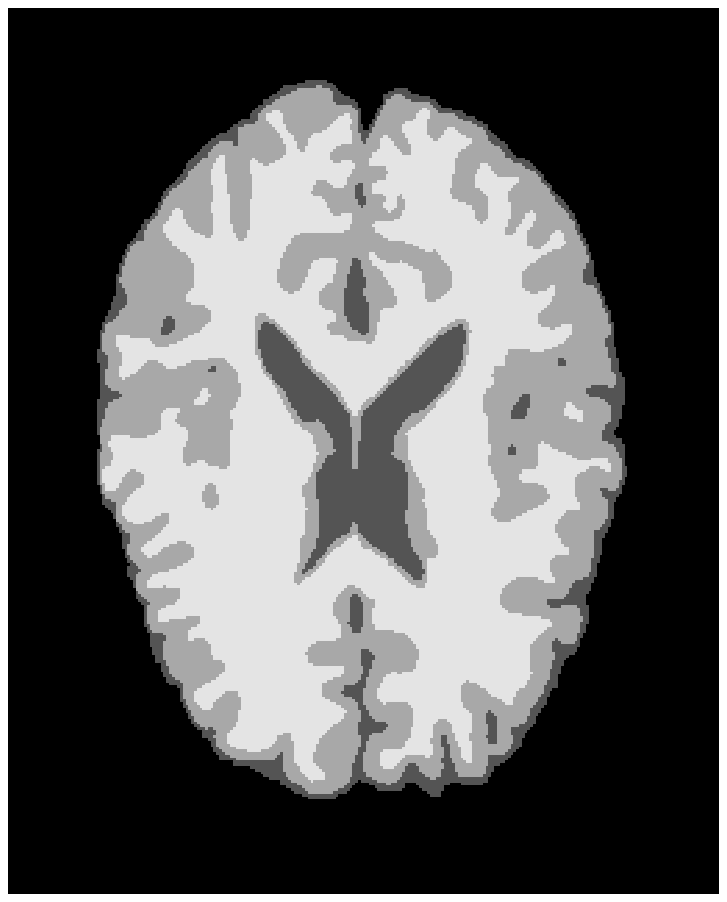} &
\includegraphics[width=\ww, height=35mm]{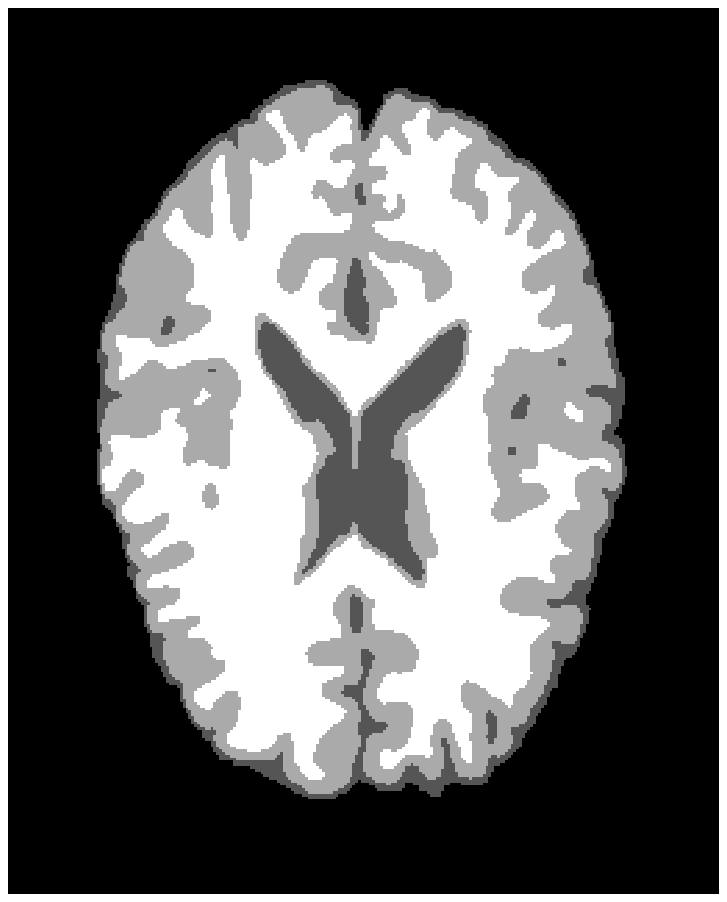} \\
(d) Yuan \cite{YBTB10} & (e) He \cite{HHMSS12} & (f) Cai \cite{CCZ13} & (g) Ours
\end{tabular}
\end{center}
\caption{Four-phase segmentation of MRI image (size $319\times 256$).
(a): given image; (b)--(g): results of methods \cite{LNZS10,PCCB09,YBTB10,HHMSS12,CCZ13} and our T-ROF method,
respectively.
}\label{brain}
\end{figure}

\begin{table}[!h]
\centering \caption{Quantitative comparison: $\lambda/\mu$, iteration (Ite.) steps, CPU time in seconds, and SA in Examples (Exa.) 1--4.
The iteration steps of our T-ROF method e.g. 418 (6) mean that 418 and 6 iterations are respectively executed to find $u$ and $\zb \tau$ in 
Algorithm \ref{alg:t-rof}.}
\begin{tabular}{|c|c|r|r|r|r|r|r|}
\cline{3-8}
 \multicolumn{2}{r|}{} & Li \cite{LNZS10} & Pock \cite{PCCB09} & Yuan \cite{YBTB10} & He \cite{HHMSS12} &
 Cai \cite{CCZ13} & Our method \\ \hline
 \multirow{4}{*}{\rotatebox{90}{Exa. 1}} & $\lambda/\mu$ & 20 & 2 & 8 & 5 & 2 & 1 \\ \cline{2-8}
& Ite. & 50 & 150 & 47 & 100 & 162 & 418 (6) \\ \cline{2-8}
& Time & 1.71 & 6.36 & 2.55 & 13.76 & 5.42 & 8.34 \\ \cline{2-8}
&SA & 0.6918 & 0.8581 & 0.6915 & 0.9888 & 0.9878 & 0.9913 \\ \hline \hline
 \multirow{4}{*}{\rotatebox{90}{Exa. 2}} & $\lambda/\mu$ & 200 & 50 & 15 & 120 & 7 & 8 \\ \cline{2-8}
& Ite. & 100 & 100 & 267 & 150 & 81 & 59 (6) \\ \cline{2-8}
& Time & 0.73 & 0.97 & 2.11 & 6.66 & 1.21 & 0.38 \\ \cline{2-8}
&SA & 0.7867 & 0.9658 & 0.9598 & 0.9663 & 0.9816 & 0.9845 \\ \hline \hline
 \multirow{4}{*}{\rotatebox{90}{Exa. 3}} & $\lambda/\mu$ & 120 & 40 & 15 & 50 & 15 & 8 \\ \cline{2-8}
& Ite. & 100 & 50 & 101 & 100 & 57 & 61 (4) \\ \cline{2-8}
& Time & 0.87 & 0.71 & 0.99 & 4.11 & 0.46 & 0.32 \\ \cline{2-8}
&SA & 0.9729 & 0.9826 & 0.9819 & 0.9872 & 0.9827 & 0.9831 \\ \hline \hline
 \multirow{4}{*}{\rotatebox{90}{Exa. 4}} & $\lambda/\mu$ & 200 & 100 & 20 & 200 & 40 & 40 \\ \cline{2-8}
& Ite. & 100 & 50 & 46 & 50 & 46 & 20 (11) \\ \cline{2-8}
& Time & 8.49 & 7.41 & 5.24 & 22.13 & 2.75 & 1.96 \\ \hline
\end{tabular}
\label{time-examples-simple}
\end{table}

{\it Example 5. Stripe image.}
In this example, we test methods on segmenting the noisy stripe image in Fig. \ref{thirtyphase} (b),
which is generated by imposing Gaussion noise with mean 0 and variance $10^{-3}$ on
the clean image Fig. \ref{thirtyphase} (a) with 30 stripes.
The results are generated by segmenting Fig. \ref{thirtyphase} (b) into five, ten, and fifteen phases, respectively.
The rows two to seven of Fig. \ref{thirtyphase} are the results of methods
\cite{LNZS10,PCCB09,YBTB10,HHMSS12,CCZ13} and our T-ROF method (with $\mu=8$),
respectively. The quantitative comparison is shown in Table \ref{time-example-thirtyphase},
from which we can see that methods \cite{PCCB09,HHMSS12} and our T-ROF method give much better
results in terms of SA; note, importantly, that our method is always the fastest compared with methods \cite{PCCB09,HHMSS12}.
Moreover, Table \ref{time-example-thirtyphase} shows clearly the great advantage of the T-ROF method and SaT method \cite{CCZ13}
in computation time: their computation time is independent to
the required number of phases $K$, whereas this is not the case for other methods
(as the number of phases goes larger, their computation time significantly increases inevitably).
From Table \ref{time-example-thirtyphase}, we also see that the T-ROF method gives much better results than the SaT method,
which again shows the excellent performance and necessity of updating the threshold $\zb \tau$
benefited from the rule proposed in \eqref{trof-thd}.

\begin{table}[!h]
\centering \caption{Quantitative comparison: $\lambda/\mu$, iteration (Ite.) steps, CPU time in seconds, and SA in Example 5.
The iteration steps of our T-ROF method e.g. 84 (4) means that 84 and 4 iterations 
are respectively executed to find $u$ and $\zb \tau$ in Algorithm \ref{alg:t-rof}.}
\begin{tabular}{|c|c|r|r|r|r|r|r|}
\cline{3-8}
 \multicolumn{2}{r|}{} & Li \cite{LNZS10} & Pock \cite{PCCB09} & Yuan \cite{YBTB10} & He \cite{HHMSS12} &
 Cai \cite{CCZ13} & Our method \\ \hline
 \multirow{4}{*}{\rotatebox{90}{5 phases}} & $\lambda/\mu$ & 80 & 100 & 10 & 50 & 10 & 8 \\ \cline{2-8}
& Ite. & 100 & 100 & 87 & 100 & 41 & 84 (4) \\ \cline{2-8}
& Time & 3.87 & 6.25 & 4.33 & 16.75 & 1.33 & 1.39 \\ \cline{2-8}
&SA & 0.9946 & 0.9965 & 0.9867 & 0.9968 & 0.9770 & 0.9986 \\ \hline \hline
 \multirow{4}{*}{\rotatebox{90}{10 phases}} & $\lambda/\mu$ & 80 & 100 & 10 & 50 & 10 & 8 \\ \cline{2-8}
& Ite. & 100 & 100 & 102 & 100 & 41 & 84 (5) \\ \cline{2-8}
& Time & 7.71 & 15.41 & 9.79 & 38.52 & 2.11 & 2.33 \\ \cline{2-8}
&SA & 0.8545 & 0.9984 & 0.9715 & 0.9848 & 0.8900 & 0.9967 \\ \hline \hline
 \multirow{4}{*}{\rotatebox{90}{15 phases}} & $\lambda/\mu$ & 80 & 100 & 10 & 50 & 10 & 8 \\ \cline{2-8}
& Ite. & 100 & 100 & 208 & 100 & 41 & 84 (5) \\ \cline{2-8}
& Time & 11.56 & 28.21 & 33.21 & 63.67 & 3.06 & 3.74 \\ \cline{2-8}
&SA & 0.7715 & 0.9993 & 0.9730 & 0.9904 & 0.5280 & 0.9933 \\ \hline
\end{tabular}
\label{time-example-thirtyphase}
\end{table}

\begin{figure}[!htb]
\begin{center}
{\setlength\fboxsep{0pt}
\begin{tabular}{cc}
\fbox{\includegraphics[width=39mm, height=20mm]{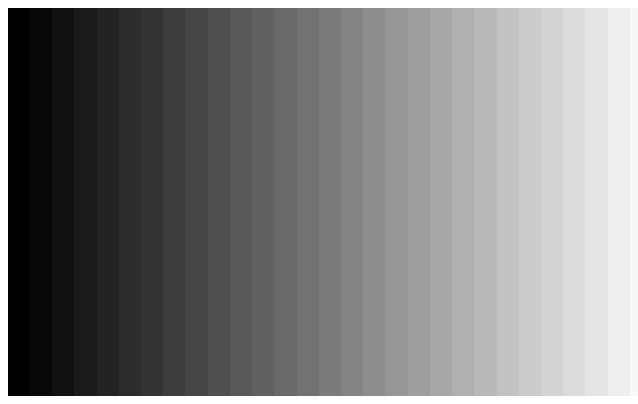}} &
\fbox{\includegraphics[width=39mm, height=20mm]{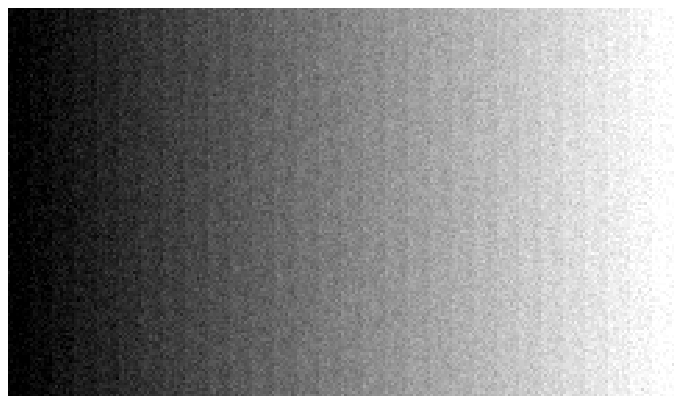} }
\end{tabular}
\begin{tabular}{ccc}
\fbox{\includegraphics[width=39mm, height=20mm]{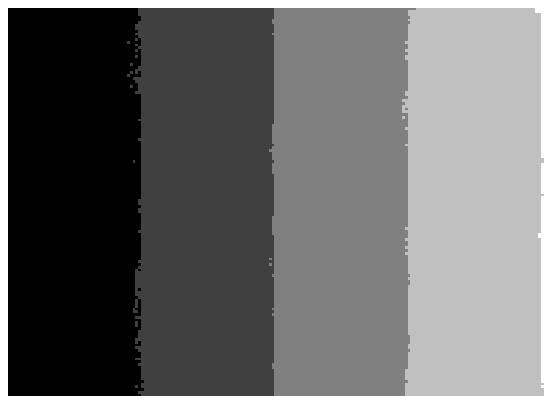}} &
\fbox{\includegraphics[width=39mm, height=20mm]{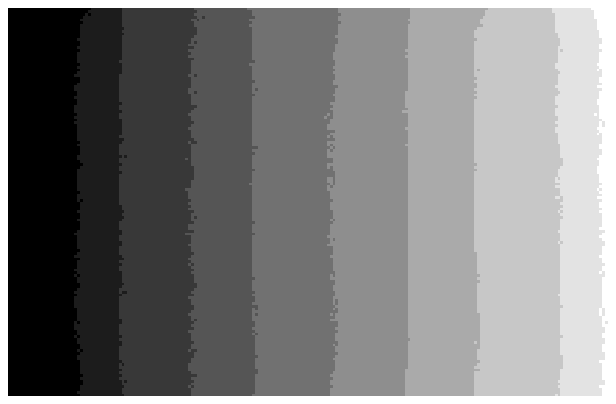}} &
\fbox{\includegraphics[width=39mm, height=20mm]{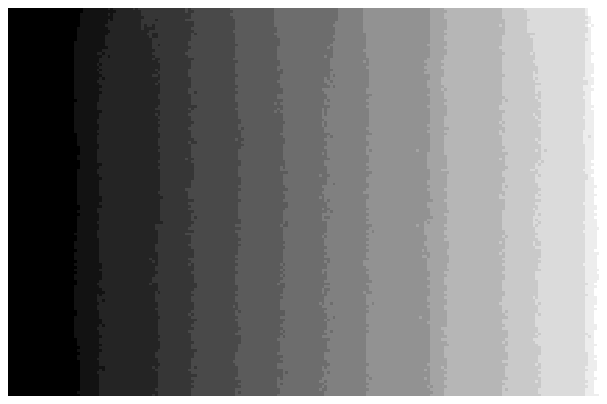}} \\
\fbox{\includegraphics[width=39mm, height=20mm]{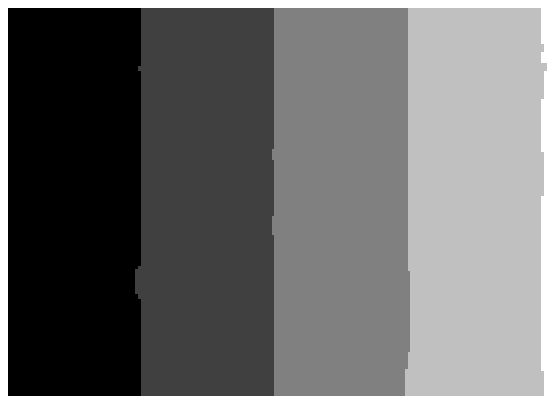}} &
\fbox{\includegraphics[width=39mm, height=20mm]{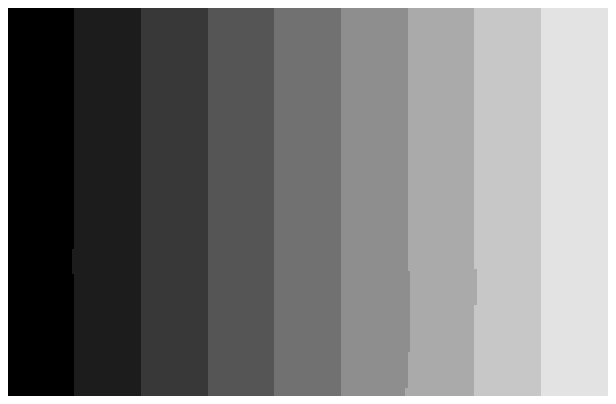}} &
\fbox{\includegraphics[width=39mm, height=20mm]{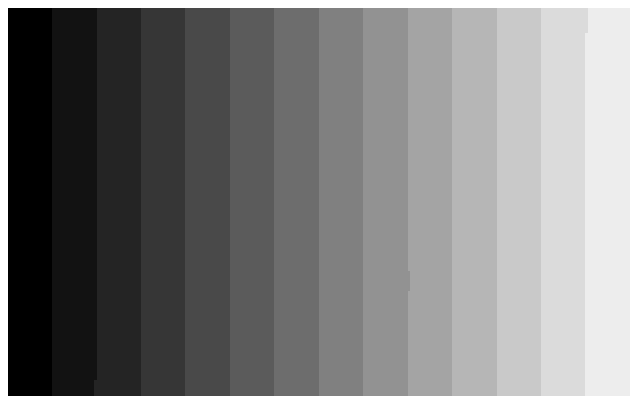}} \\
\fbox{\includegraphics[width=39mm, height=20mm]{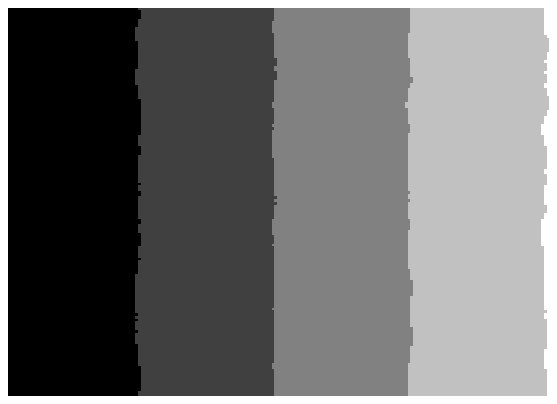}} &
\fbox{\includegraphics[width=39mm, height=20mm]{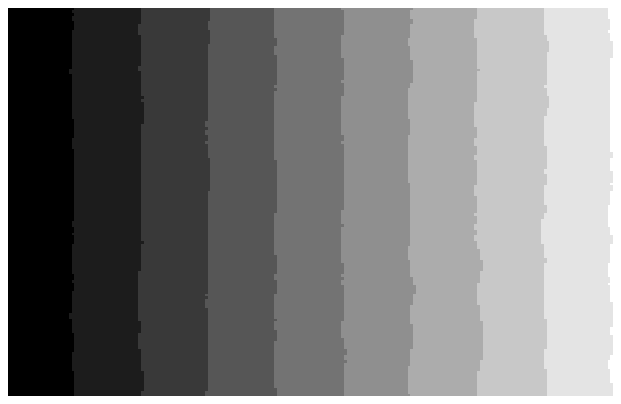}} &
\fbox{\includegraphics[width=39mm, height=20mm]{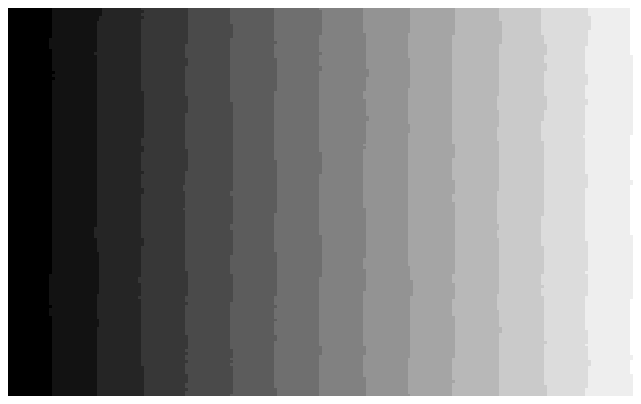}} \\
\fbox{\includegraphics[width=39mm, height=20mm]{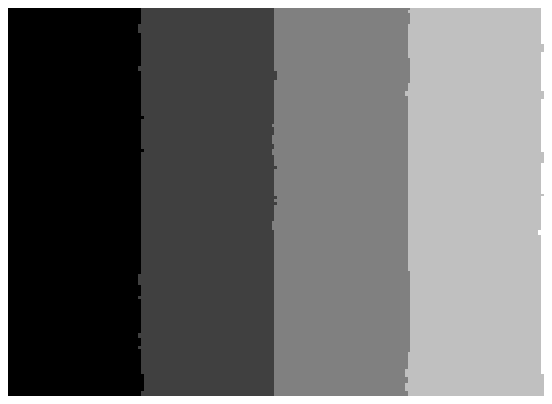}} &
\fbox{\includegraphics[width=39mm, height=20mm]{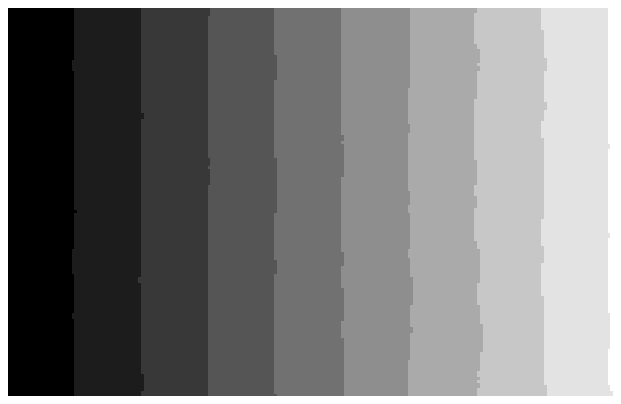}} &
\fbox{\includegraphics[width=39mm, height=20mm]{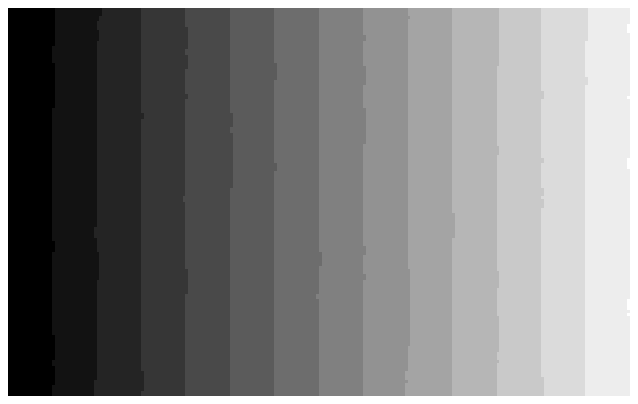}} \\
\fbox{\includegraphics[width=39mm, height=20mm]{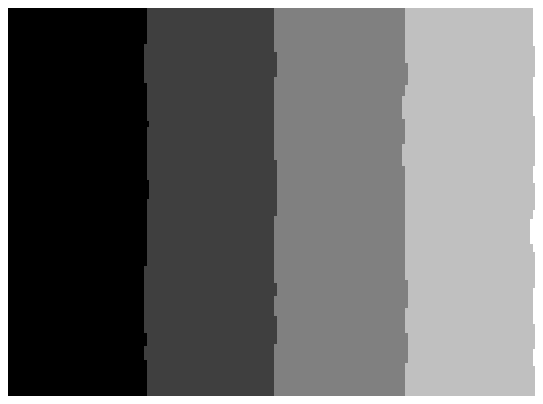}} &
\fbox{\includegraphics[width=39mm, height=20mm]{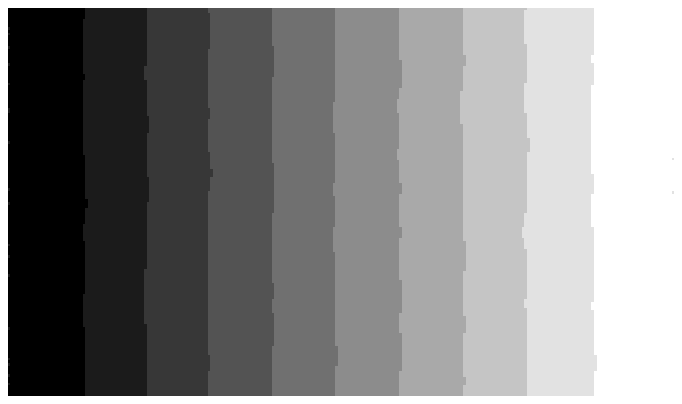}} &
\fbox{\includegraphics[width=39mm, height=20mm]{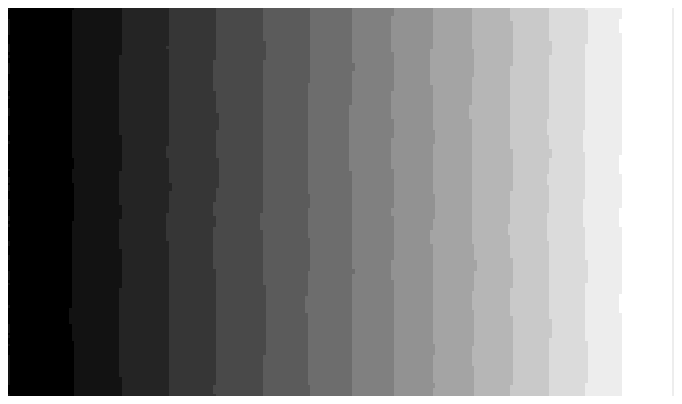}} \\
\fbox{\includegraphics[width=39mm, height=20mm]{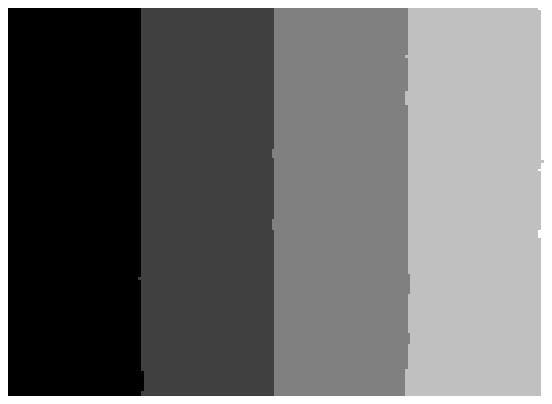}} &
\fbox{\includegraphics[width=39mm, height=20mm]{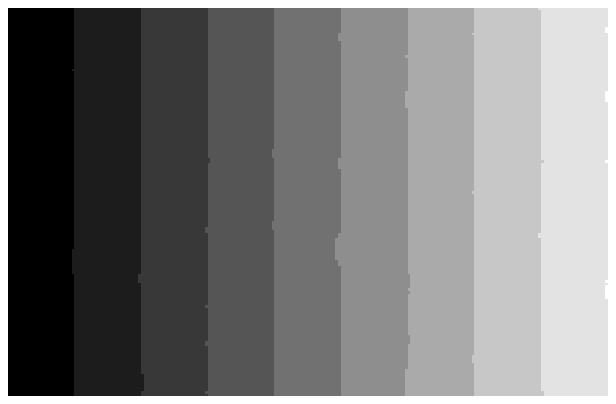}} &
\fbox{\includegraphics[width=39mm, height=20mm]{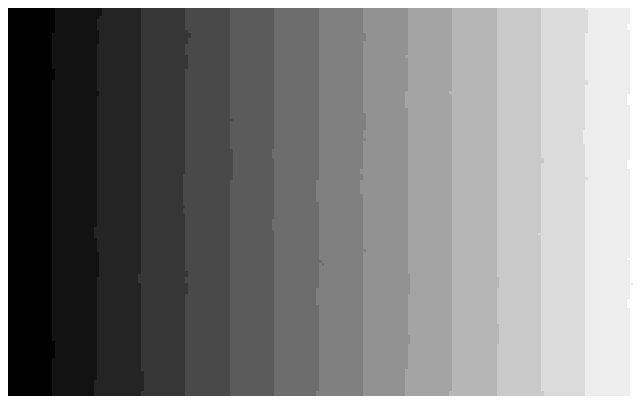}} \\
\end{tabular}}
\end{center}
\caption{Stripe image segmentation (size $140\times 240$).
Row one: clean image (left) and noisy image (right); Rows two to seven: results
of methods \cite{LNZS10,PCCB09,YBTB10,HHMSS12,CCZ13} and our T-ROF method, respectively.
}\label{thirtyphase}
\end{figure}

{\it Example 6. Three-phase image containing phases with close intensities.}
In this example, we test a three-phase image, where two phases of it have very close intensities.
The test image in Fig. \ref{threephase-close} (c) is generated using the same way as that in Example 2 with
Gaussian noise of mean 0 and variance $10^{-2}$, and the factors used in the black and
white color parts in the mask are 0.1 and 0.6, respectively.
For the results of methods \cite{LNZS10,PCCB09,YBTB10,HHMSS12,CCZ13}, in order to reveal the performance of these methods
clearly, we give two representative results for each method compared using different regularization
parameters, see Fig. \ref{threephase-close} (d)--(m).
From Fig. \ref{threephase-close}, we can see that our T-ROF method gives the best result, see 
Fig. \ref{threephase-close} (o) (with $\mu=8$). In detail, we notice that method \cite{LNZS10} gives very poor results, see Fig. \ref{threephase-close} (d)--(e).
For methods \cite{PCCB09, HHMSS12}, no matter how their parameters are tuned, they all cannot
achieve good results as the result of our T-ROF method given in Fig. \ref{threephase-close} (o). 
More specifically, methods \cite{PCCB09, HHMSS12} either give results
separating different phases unclearly, or give results not removing the noise successfully.
The main reason is that, the regularization parameters in methods \cite{PCCB09, HHMSS12} are constant, see 
the parameter $\lambda$ used in the PCMS model \eqref{pcms}; however, constant parameter used to penalize all the phases equally is
obviously not appropriate for this case.
The results of methods \cite{YBTB10,CCZ13}, Fig. \ref{threephase-close} (h) and (l), are better than the results of methods
\cite{LNZS10,PCCB09,HHMSS12}. Again, after comparing
Fig. \ref{threephase-close} (h), (l) and Fig. \ref{threephase-close} (o) from visual-validation and the quantitative results given in
Table \ref{time-examples-complex}, we see that our T-ROF method gives the best result
in terms of segmentation quality and computation time.

\begin{figure}[!htb]
\begin{center}
\begin{tabular}{cccc}
\includegraphics[width=\ww, height=\ww]{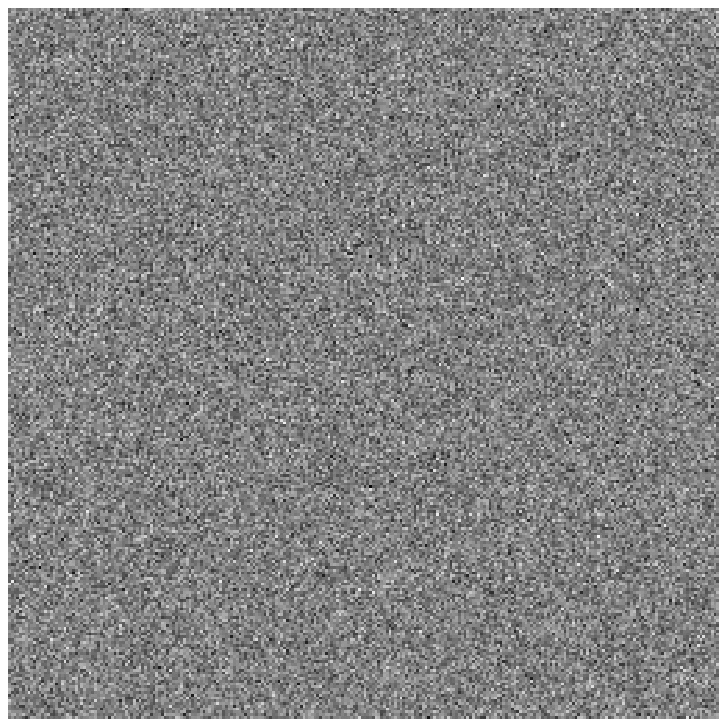} &
\includegraphics[width=\ww, height=\ww]{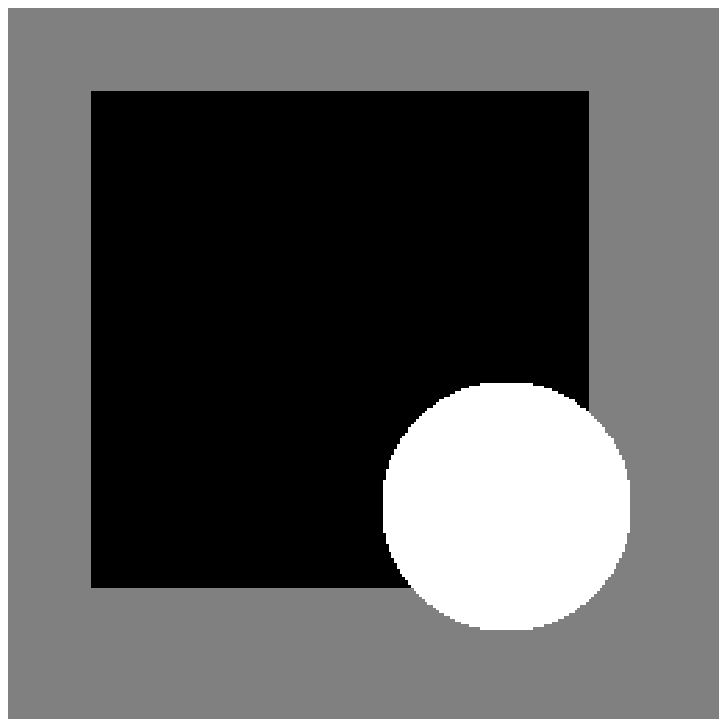} &
\includegraphics[width=\ww, height=\ww]{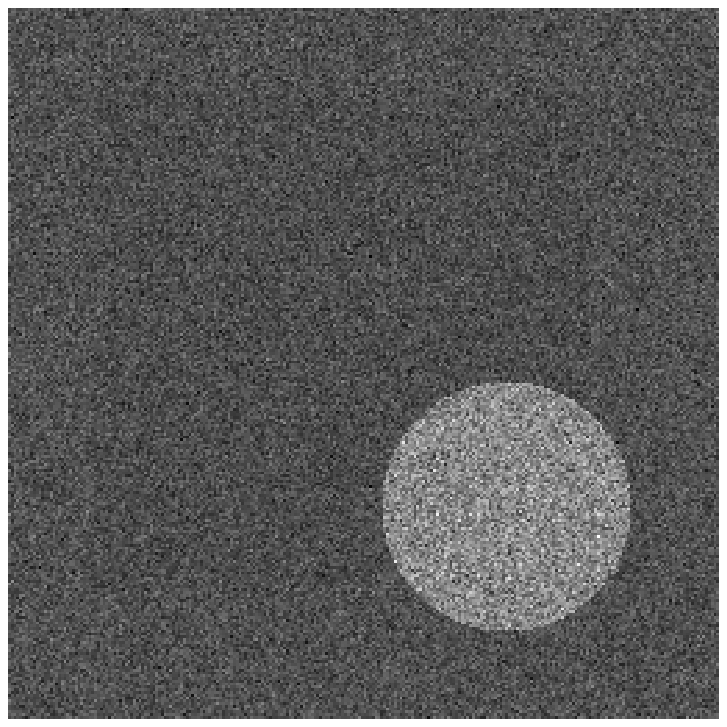} \\
(a) Gaussian noise & (b) Mask & (c) Noisy image\\
\end{tabular}
\begin{tabular}{cccc}
\includegraphics[width=\ww, height=\ww]{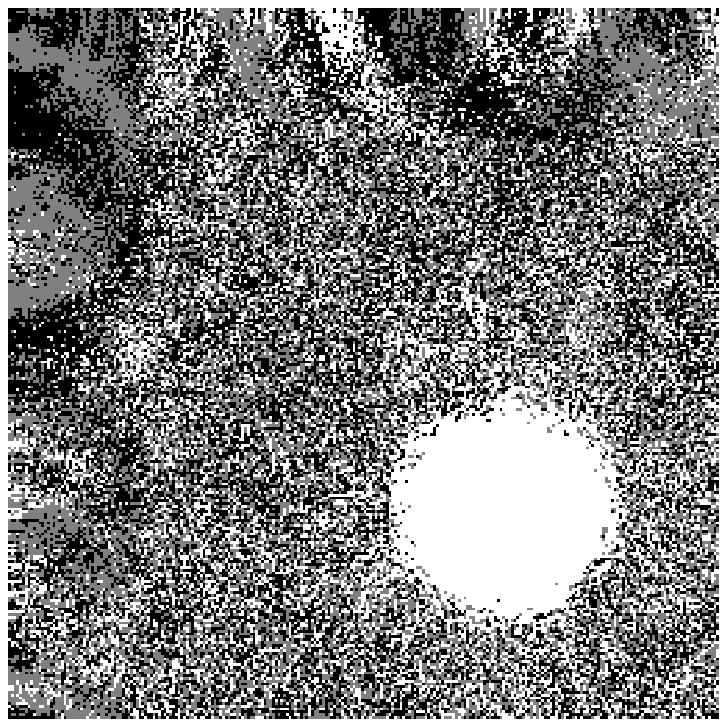} &
\includegraphics[width=\ww, height=\ww]{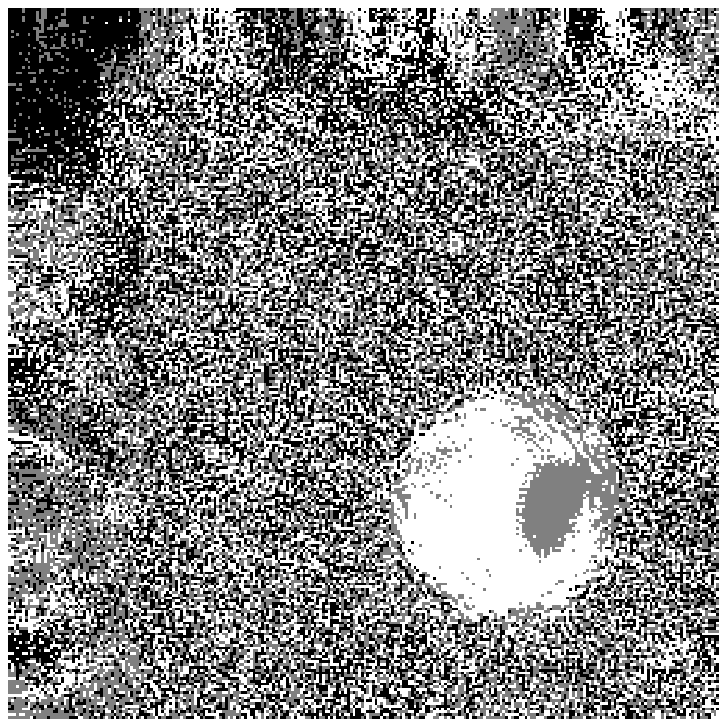} &
\includegraphics[width=\ww, height=\ww]{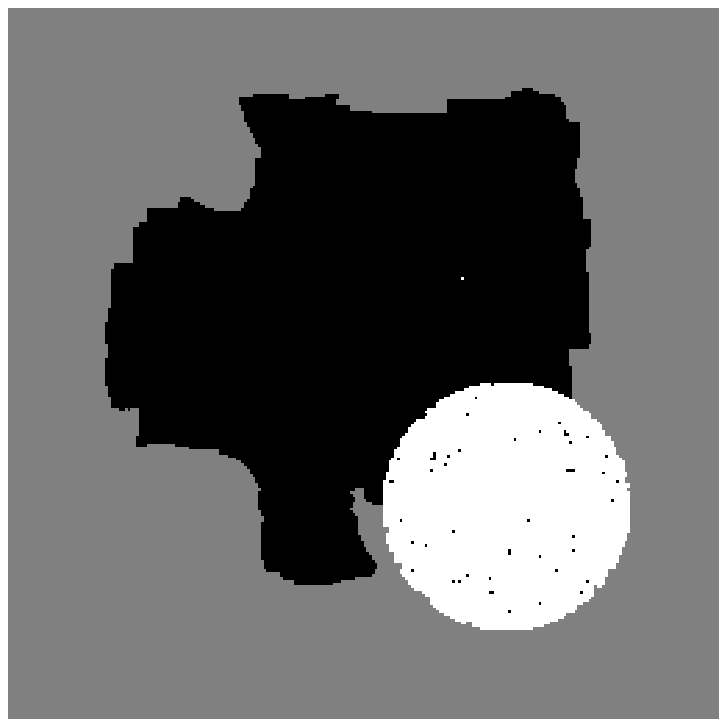} &
\includegraphics[width=\ww, height=\ww]{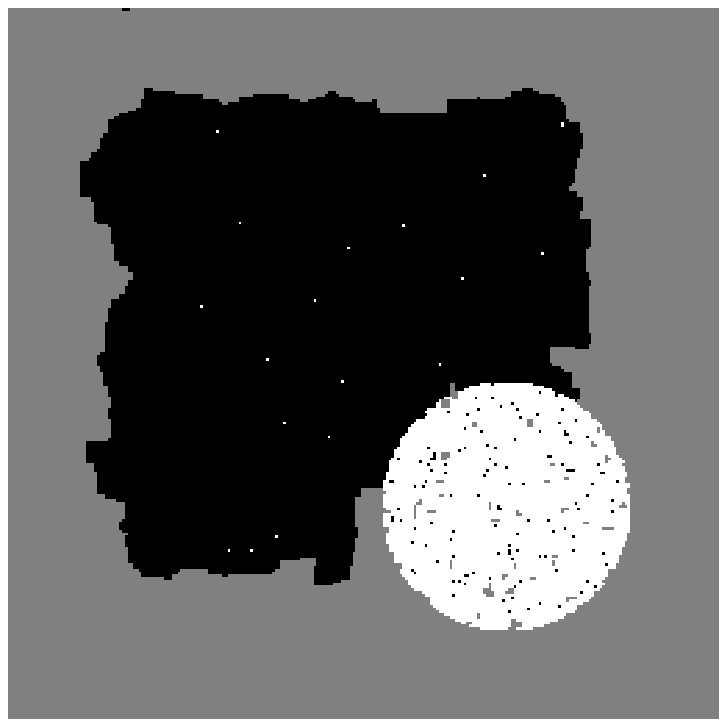} \\
(d) Li \cite{LNZS10} & (e) Li \cite{LNZS10} & (f) Pock \cite{PCCB09} & (g) Pock \cite{PCCB09} \\
\includegraphics[width=\ww, height=\ww]{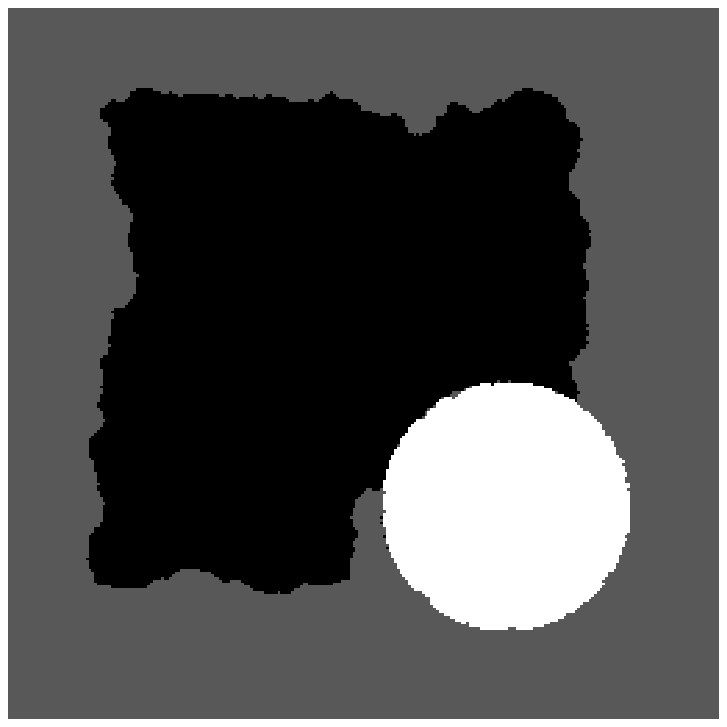} &
\includegraphics[width=\ww, height=\ww]{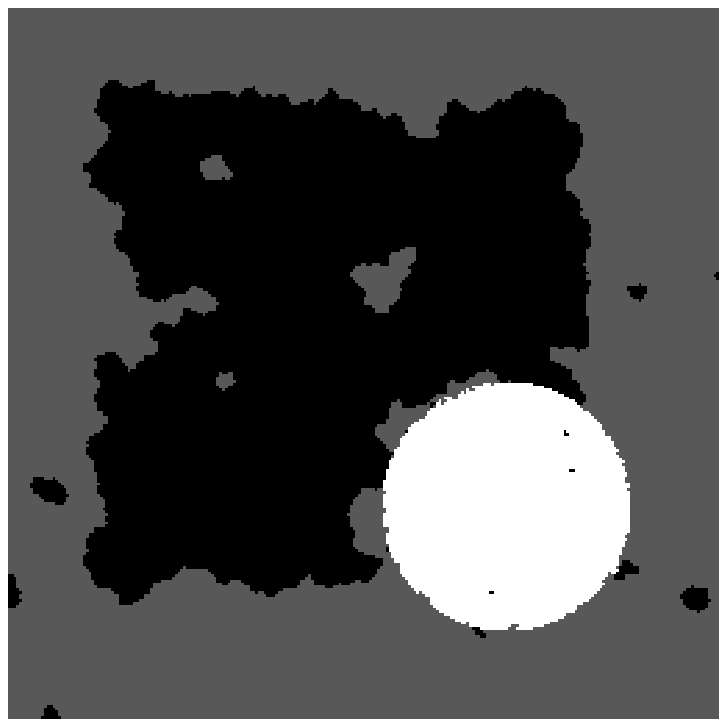} &
\includegraphics[width=\ww, height=\ww]{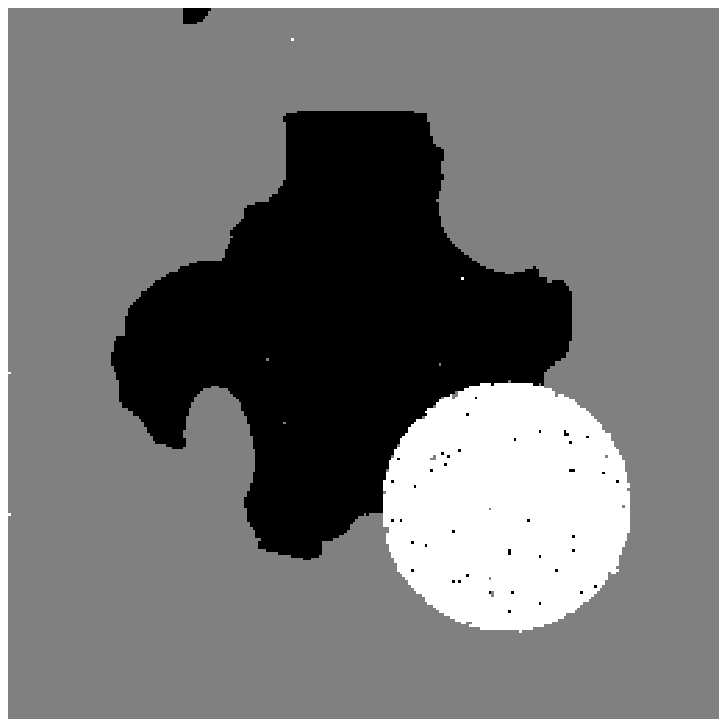} &
\includegraphics[width=\ww, height=\ww]{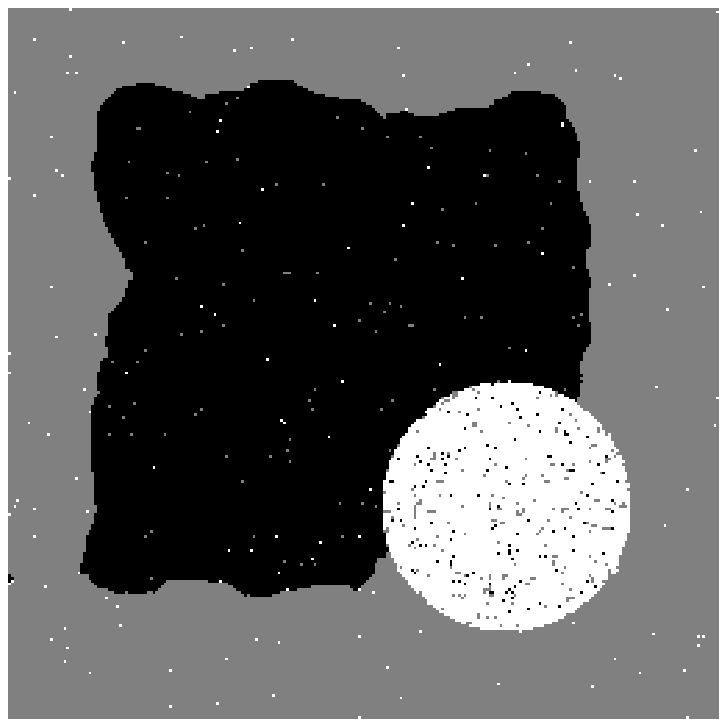} \\
(h) Yuan \cite{YBTB10} & (i) Yuan \cite{YBTB10} & (j) He \cite{HHMSS12} &
(k) He \cite{HHMSS12}\\
\includegraphics[width=\ww, height=\ww]{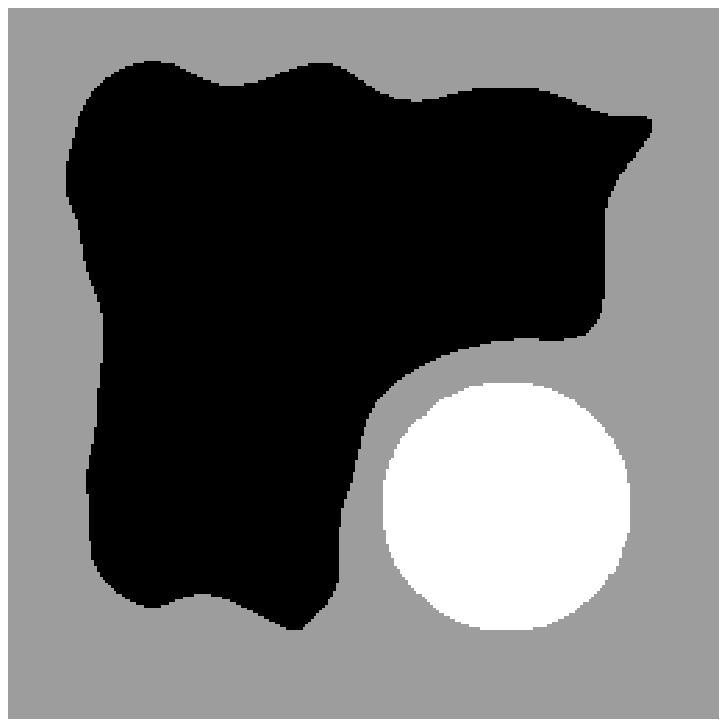} &
\includegraphics[width=\ww, height=\ww]{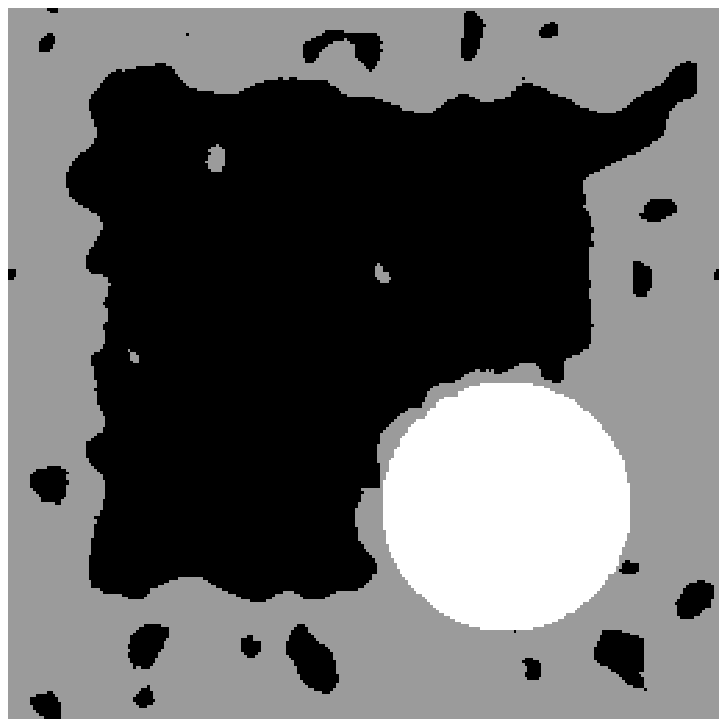} &
\includegraphics[width=\ww, height=\ww]{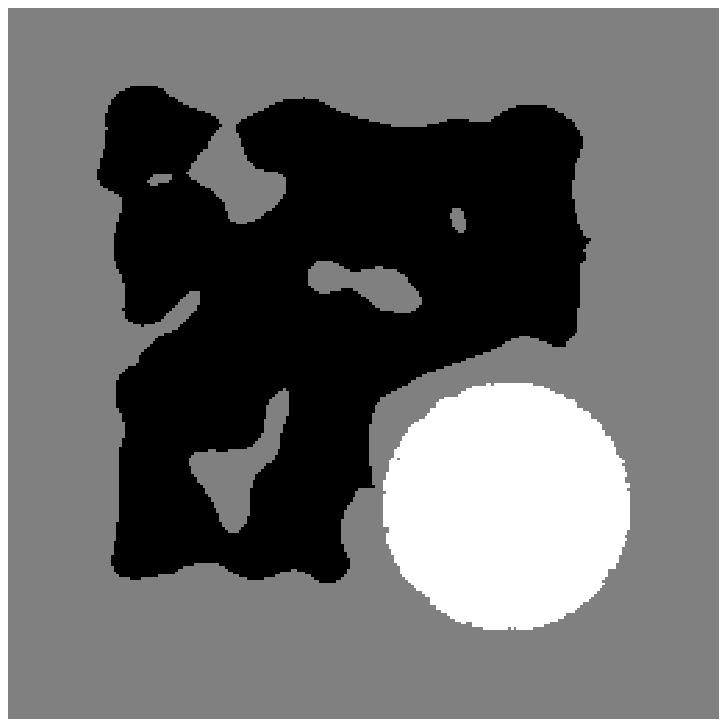} &
\includegraphics[width=\ww, height=\ww]{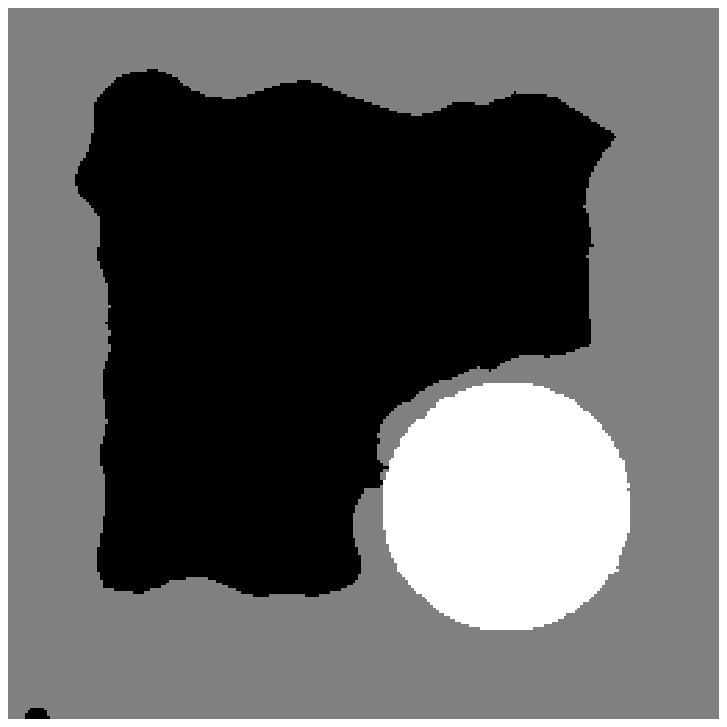} \\
(l) Cai \cite{CCZ13} & (m) Cai \cite{CCZ13} & (n) Ours (Ite. 1) & (o) Ours (Ite. 6)
\end{tabular}
\end{center}
\caption{Segmentation of three-phase image containing phases with close intensities
(size $256\times 256$).
(a): Gaussian noise imposed on a constant image; (b): three-phase mask;
(c): noisy image generated from (a) and (b);
(d)--(e): results of \cite{LNZS10} with $\lambda = 50$ and 100, respectively;
(f)--(g): results of \cite{PCCB09} with $\lambda = 70$ and 100, respectively;
(h)--(i): results of \cite{YBTB10} with $\lambda = 15$ and 20, respectively;
(j)--(k): results of \cite{HHMSS12} with $\lambda = 100$ and 200, respectively;
(l)--(m): results of \cite{CCZ13} with $\mu = 3$ and 10, respectively;
(n)--(o): results of our T-ROF method, with $\mu = 8$, at iterations 1 and 6 (final result), respectively.
}\label{threephase-close}
\end{figure}
	
{\it Example 7. Four-phase image containing phases with close intensities.}	
In order to show the powerful of our T-ROF method in handing images containing phases with close intensities, 
in this example, we test these methods on a four-phase image where each two phases with close intensities.
Fig. \ref{fourphase-close} (a) and (b) are respectively the clean image and the noisy image
generated by adding Gaussian noise with mean 0 and variance $3\times 10^{-2}$.
Similar to Example 6, this example also provides two results for each method compared using different
representative parameters, see Fig. \ref{fourphase-close} (c)--(l).
From Fig. \ref{fourphase-close} and Table \ref{time-examples-complex} (which gives quantitative results), we see that
methods \cite{LNZS10,PCCB09,YBTB10,HHMSS12} all give poor results compared with
our result in terms of segmentation quality and computation time.
In particular, our method gives much better results than that of its akin SaT method \cite{CCZ13},
see Fig. \ref{fourphase-close} (k) and (n), which further verifies the excellent performance of updating the threshold $\zb \tau$
using the rule proposed in \eqref{trof-thd}.

\begin{figure}[!htb]
\begin{center}
\begin{tabular}{cc}
\includegraphics[width=\ww, height=\ww]{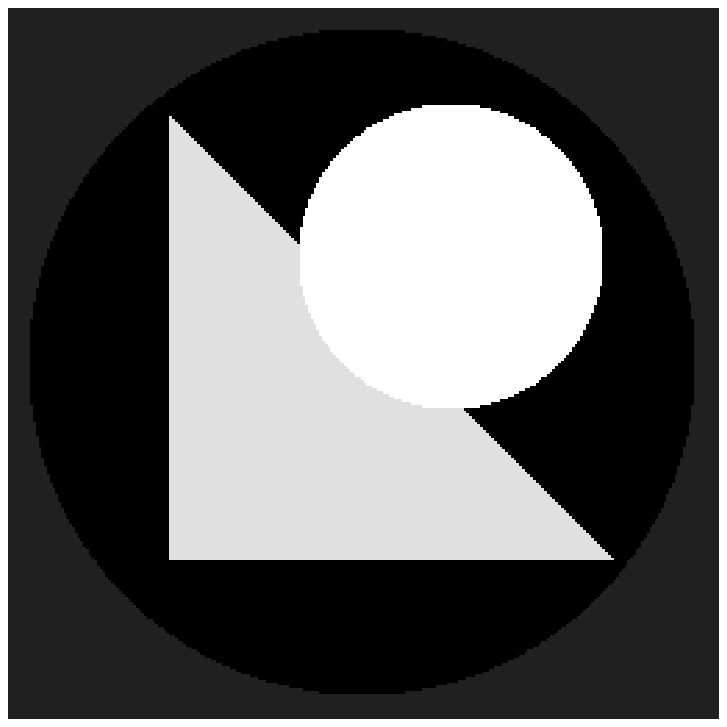} &
\includegraphics[width=\ww, height=\ww]{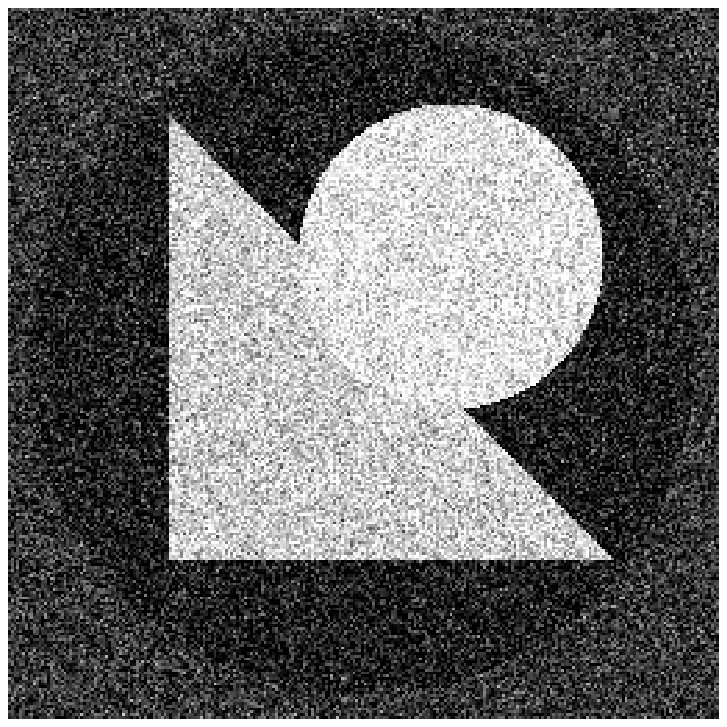} \\
(a) Clean image & (b) Noisy image
\end{tabular}
\begin{tabular}{cccc}
\includegraphics[width=\ww, height=\ww]{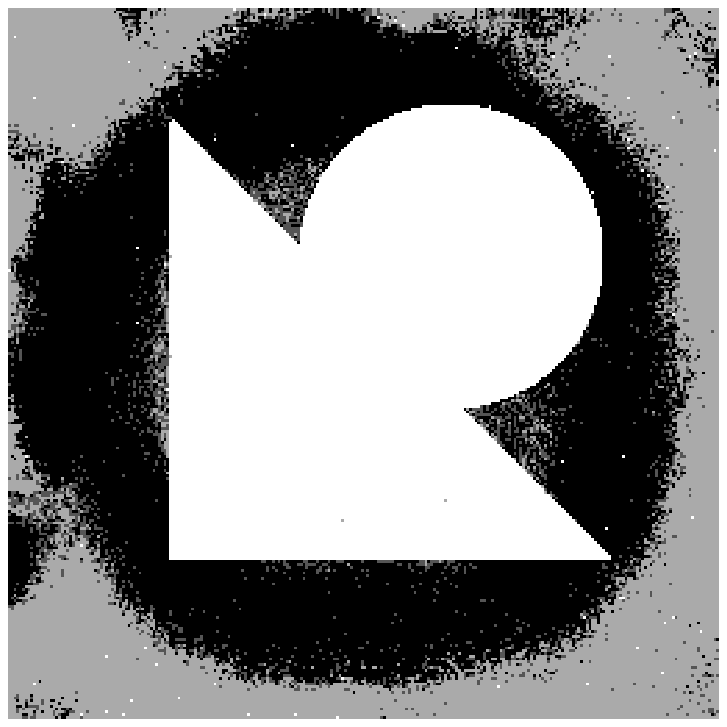} &
\includegraphics[width=\ww, height=\ww]{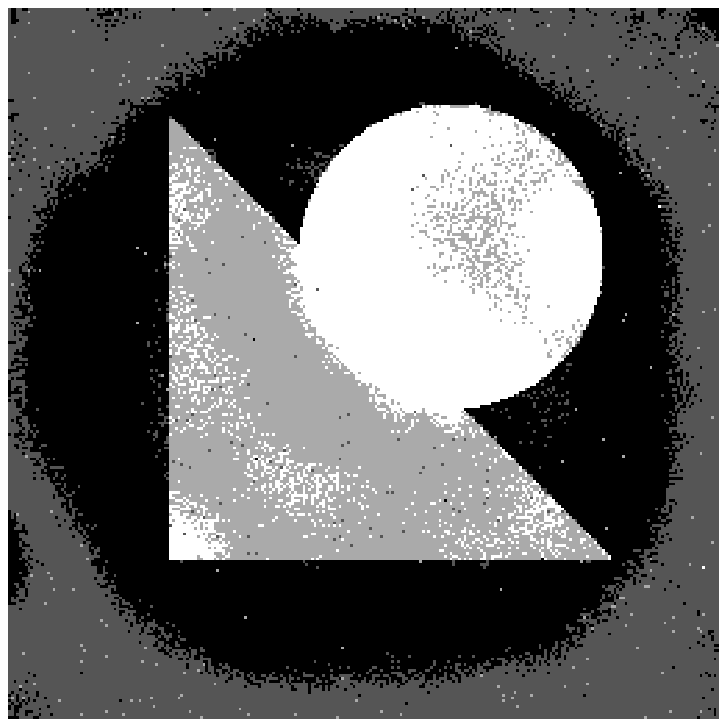} &
\includegraphics[width=\ww, height=\ww]{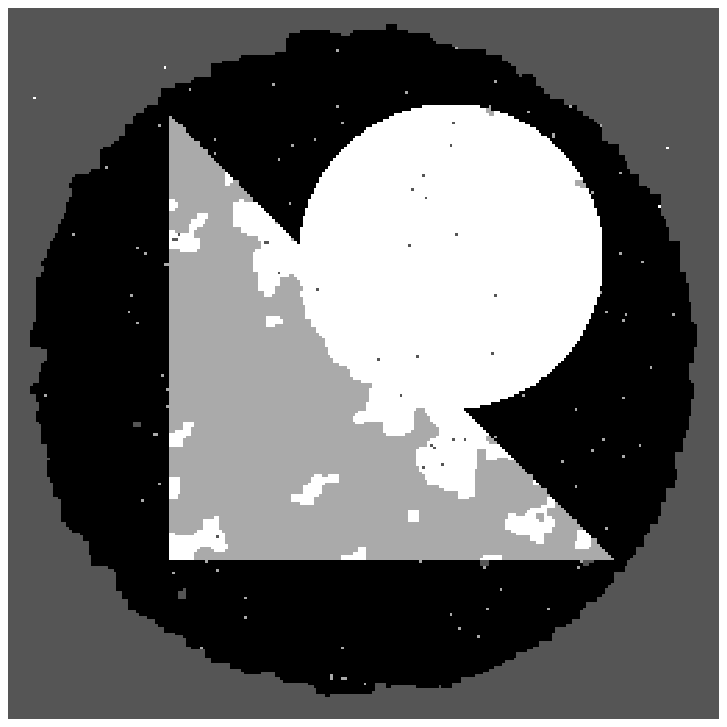} &
\includegraphics[width=\ww, height=\ww]{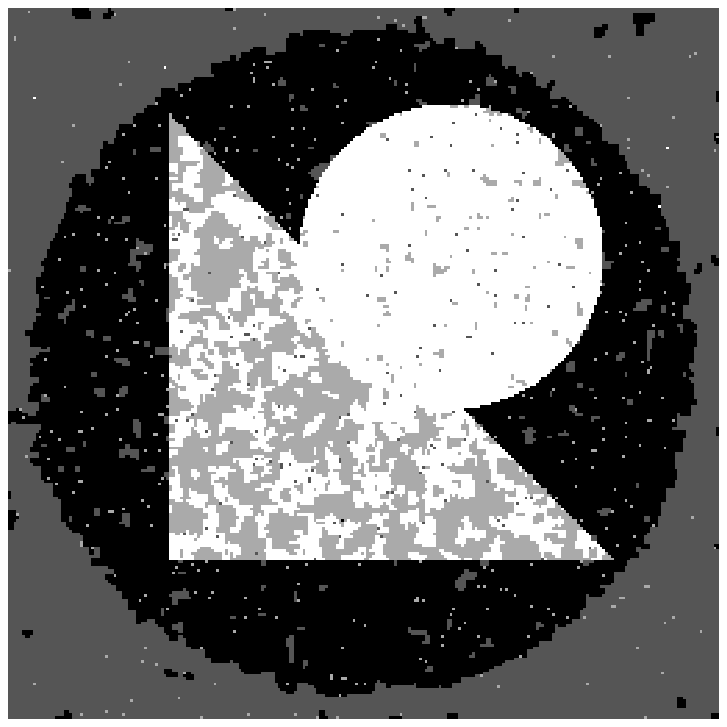} \\
(c) Li \cite{LNZS10} & (d) Li \cite{LNZS10} & (e) Pock \cite{PCCB09} & (f) Pock \cite{PCCB09} \\
\includegraphics[width=\ww, height=\ww]{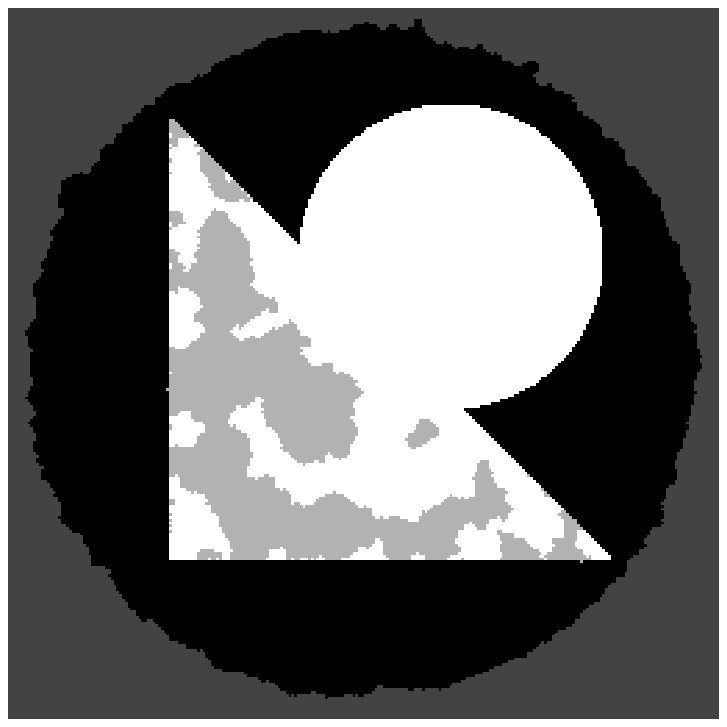} &
\includegraphics[width=\ww, height=\ww]{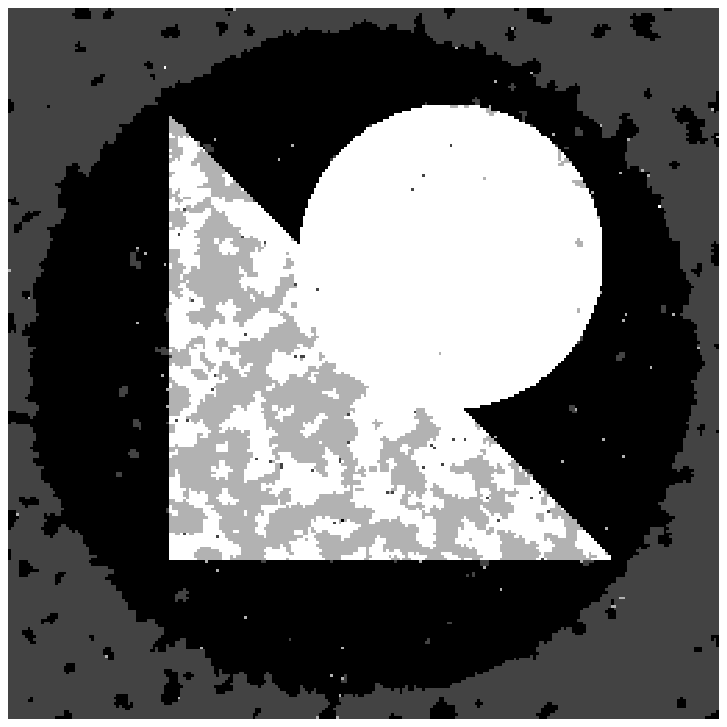} &
\includegraphics[width=\ww, height=\ww]{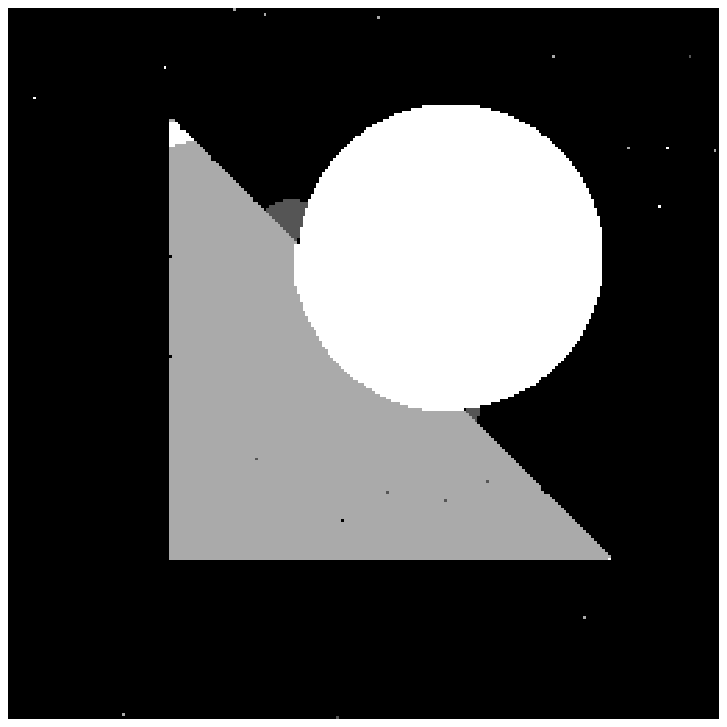} &
\includegraphics[width=\ww, height=\ww]{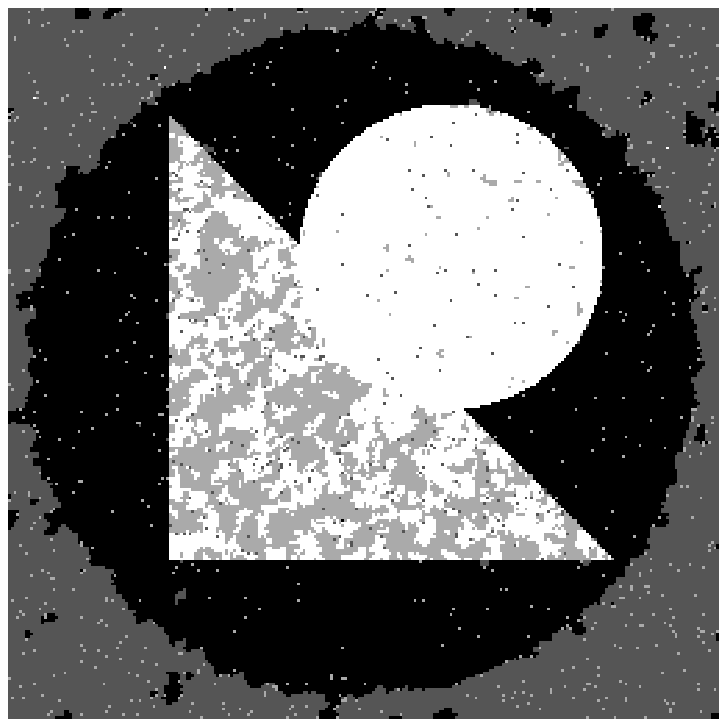} \\
(g) Yuan \cite{YBTB10} & (h) Yuan \cite{YBTB10} & (i) He \cite{HHMSS12} &
(j) He \cite{HHMSS12}\\
\includegraphics[width=\ww, height=\ww]{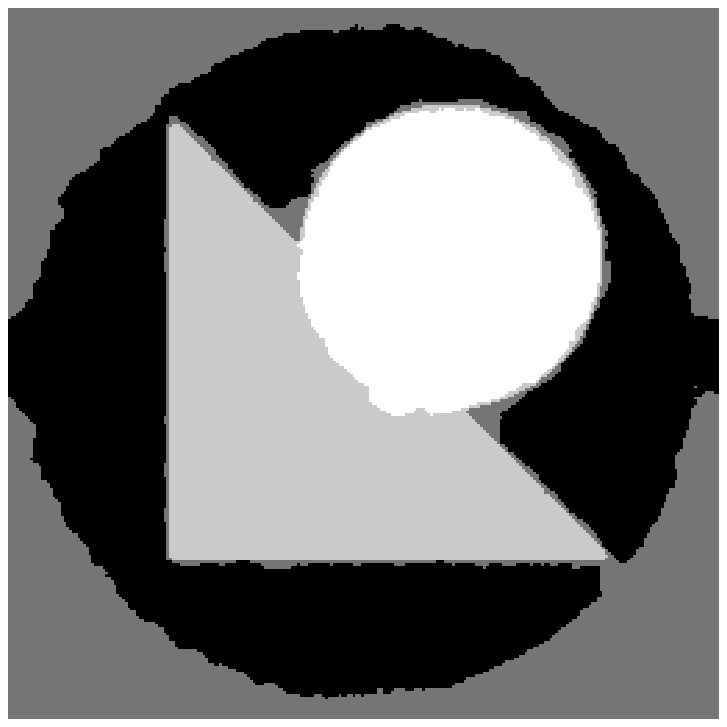} &
\includegraphics[width=\ww, height=\ww]{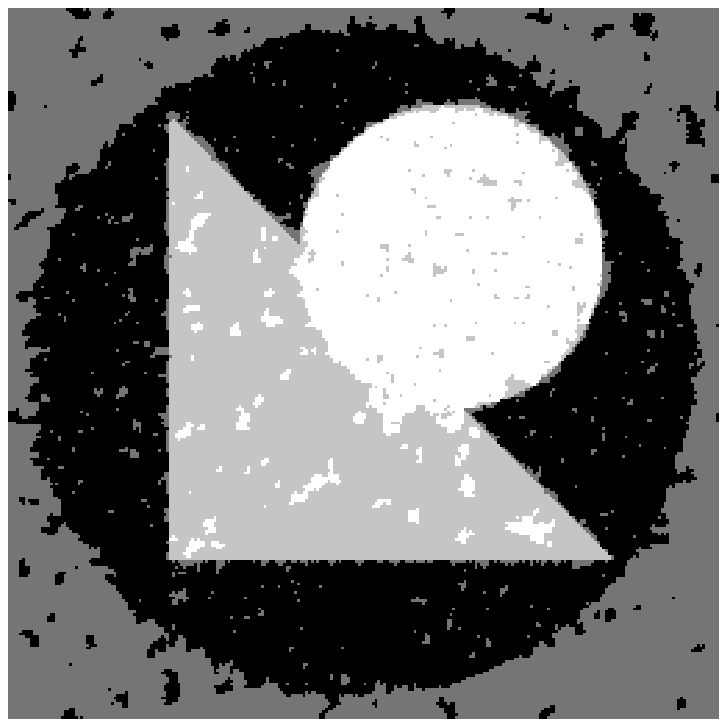} &
\includegraphics[width=\ww, height=\ww]{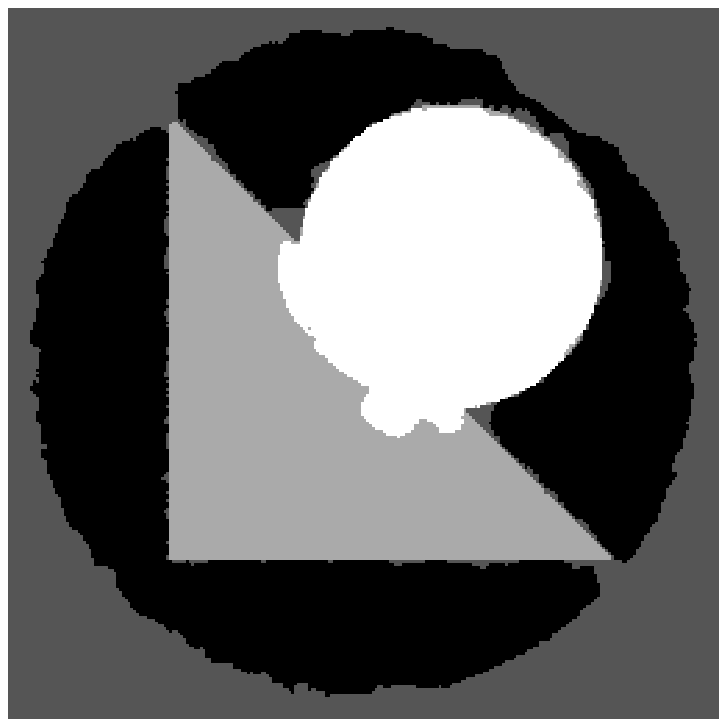} &
\includegraphics[width=\ww, height=\ww]{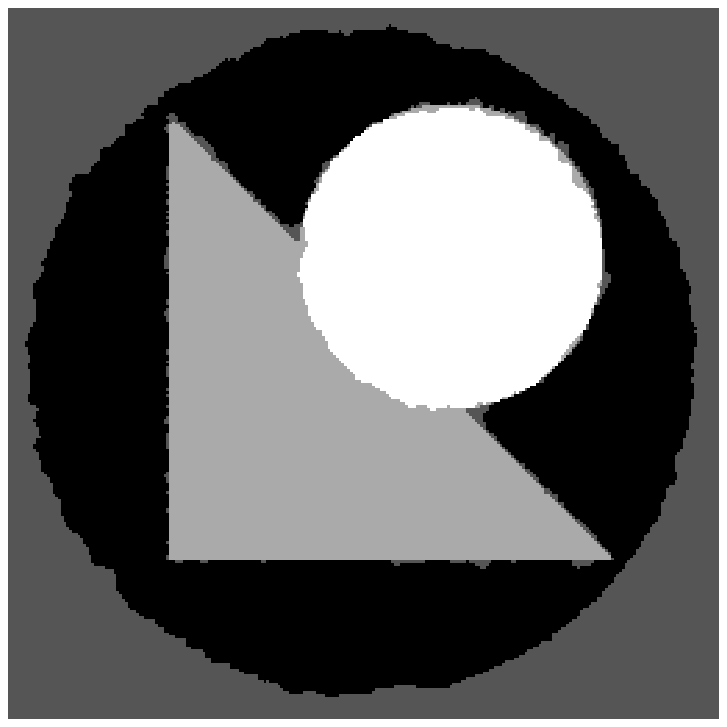} \\
(k) Cai \cite{CCZ13} & (l) Cai \cite{CCZ13} & (m) Ours (Ite. 1) & (n) Ours (Ite. 5)
\end{tabular}
\end{center}
\caption{Segmentation of four-phase image containing phases with close intensities
(size $256\times 256$). (a): clean image; (b): noisy image;
(c)--(d): results of \cite{LNZS10} with $\lambda = 50$ and 100, respectively;
(e)--(f): results of \cite{PCCB09} with $\lambda = 30$ and 50, respectively;
(g)--(h): results of \cite{YBTB10} with $\lambda = 10$ and 15, respectively;
(i)--(j): results of \cite{HHMSS12} with $\lambda = 20$ and 100, respectively;
(k)--(l): results of \cite{CCZ13} with $\mu = 3$ and 10, respectively;
(m)--(n): results of our T-ROF method, with $\mu = 4$, at iterations 1 and 5 (final result), respectively.
}\label{fourphase-close}
\end{figure}

\begin{table}[!h]
\centering \caption{Quantitative comparison: $\lambda/\mu$, iteration (Ite.) steps, CPU time in seconds, and SA in Examples (Exa.) 6--7.
The iteration steps of our T-ROF method e.g. 68 (6) mean that 68 and 6 iterations are respectively executed to find $u$ and $\zb \tau$ in 
Algorithm \ref{alg:t-rof}. }
\begin{tabular}{|c|c|r|r|r|r|r|r|}
\cline{3-8}
 \multicolumn{2}{r|}{} & Li \cite{LNZS10} & Pock \cite{PCCB09} & Yuan \cite{YBTB10} & He \cite{HHMSS12} &
 Cai \cite{CCZ13} & Our method \\ \hline
 \multirow{4}{*}{\rotatebox{90}{Exa. 6}} & $\lambda/\mu$ & 50 & 70 & 15 & 100 & 3 & - \\ \cline{2-8}
& Ite. & 100 & 200 & 300 & 100 & 103 & - \\ \cline{2-8}
& Time & 6.08 & 14.21 & 25.64 & 24.02 & 3.55 & - \\ \cline{2-8}
&SA & 0.4420 & 0.8840 & 0.9557 & 0.7939 & 0.9248 & - \\ \hline \hline
 \multirow{4}{*}{\rotatebox{90}{Exa. 6}} & $\lambda/\mu$ & 100 & 100 & 20 & 200 & 10 & 8 \\ \cline{2-8}
& Ite. & 100 & 200 & 300 & 100 & 83 & 68 (6) \\ \cline{2-8}
& Time & 4.96 & 14.62 & 21.84 & 22.63 & 3.06 & 2.07 \\ \cline{2-8}
&SA & 0.3746 & 0.9485 & 0.9359 & 0.9637 & 0.9232 & 0.9550 \\ \hline \hline
 \multirow{4}{*}{\rotatebox{90}{Exa. 7}} & $\lambda/\mu$ & 50 & 30 & 10 & 20 & 3 & - \\ \cline{2-8}
& Ite. & 100 & 150 & 194 & 100 & 128 & - \\ \cline{2-8}
& Time & 6.71 & 14.95 & 16.86 & 32.53 & 3.92 & - \\ \cline{2-8}
&SA & 0.4900 & 0.9549 & 0.9043 & 0.6847 & 0.9545 & - \\ \hline \hline
 \multirow{4}{*}{\rotatebox{90}{Exa. 7}} & $\lambda/\mu$ & 100 & 50 & 15 & 100 & 10 & 4 \\ \cline{2-8}
& Ite. & 100 & 150 & 246 & 100 & 65 & 111 (5) \\ \cline{2-8}
& Time & 6.64 & 14.91 & 21.32 & 32.79 & 2.28 & 3.13 \\ \cline{2-8}
&SA & 0.9023 & 0.8769 & 0.8744 & 0.8709 & 0.9273 & 0.9798 \\ \hline
\end{tabular}
\label{time-examples-complex}
\end{table}

\begin{figure}[!htb]
\begin{center}
\begin{tabular}{ccc}
\includegraphics[trim={{.03\linewidth} {.05\linewidth} {.12\linewidth} {.01\linewidth}}, clip, width=0.23\linewidth, height = 0.12\linewidth, width=40mm, height=32mm]{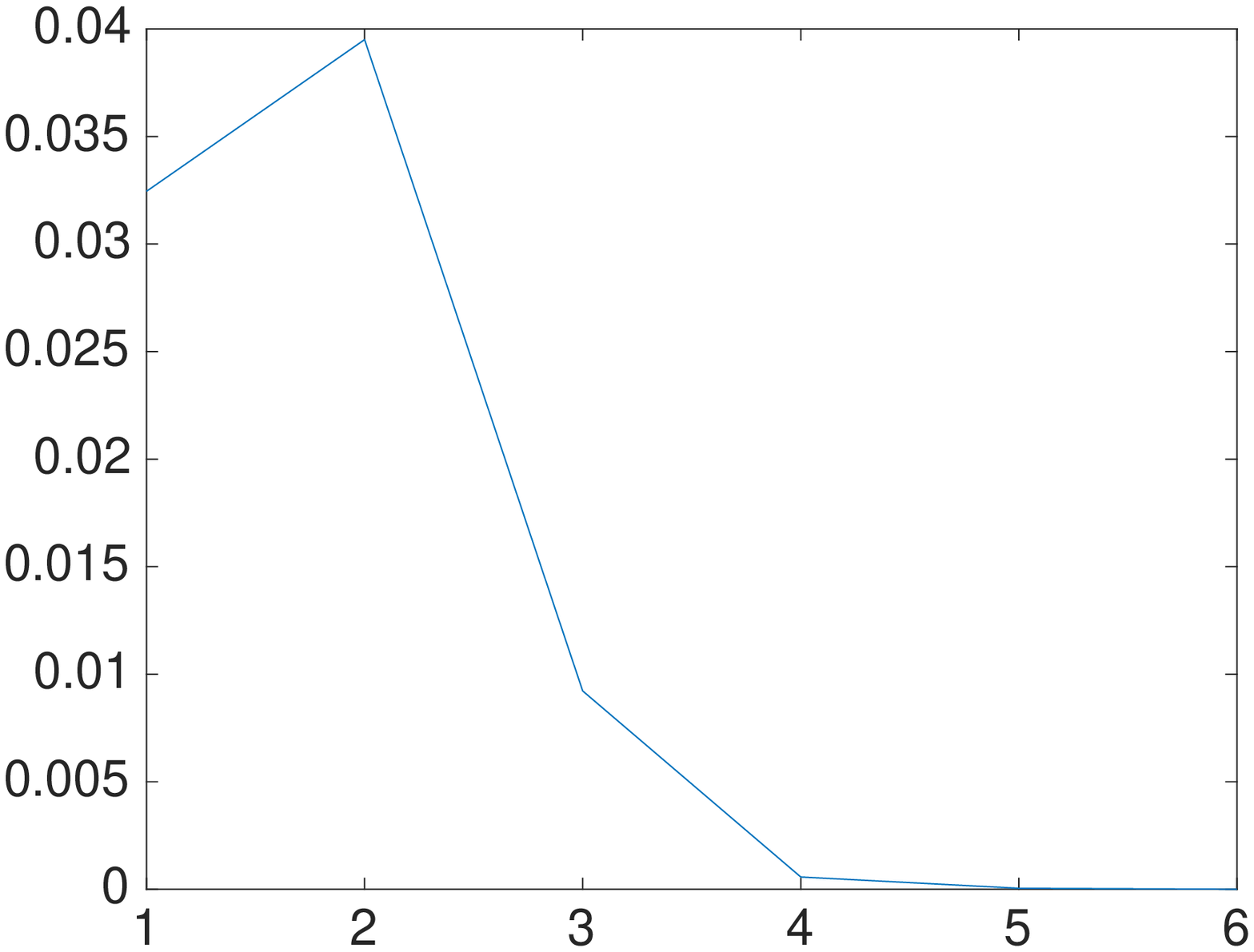} \put(-50,75){\small Example 1}&
\includegraphics[trim={{.03\linewidth} {.05\linewidth} {.12\linewidth} {.01\linewidth}}, clip, width=0.23\linewidth, height = 0.12\linewidth, width=40mm, height=32mm]{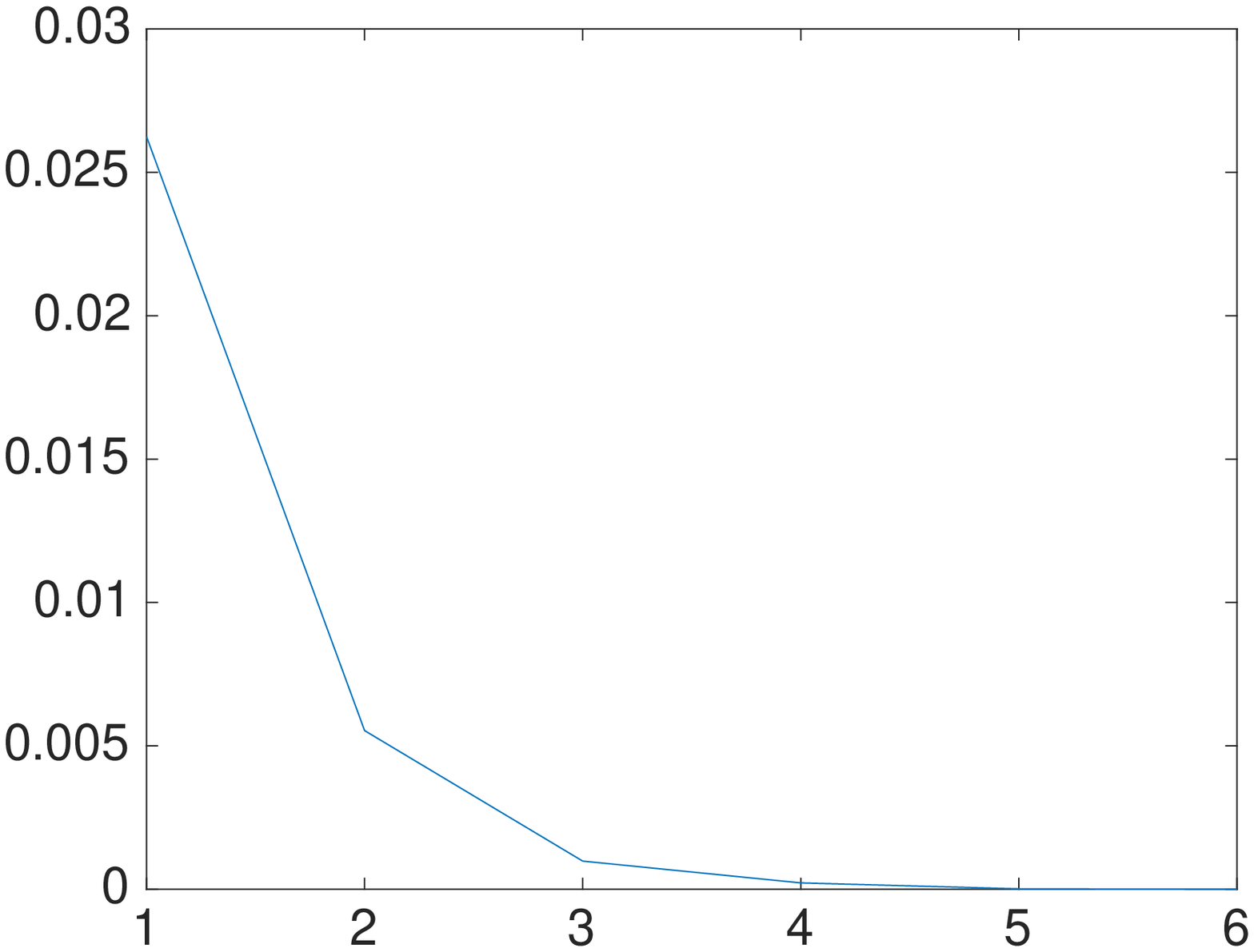} \put(-50,75){\small Example 2} &
\includegraphics[trim={{.03\linewidth} {.05\linewidth} {.12\linewidth} {.01\linewidth}}, clip, width=0.23\linewidth, height = 0.12\linewidth, width=40mm, height=32mm]{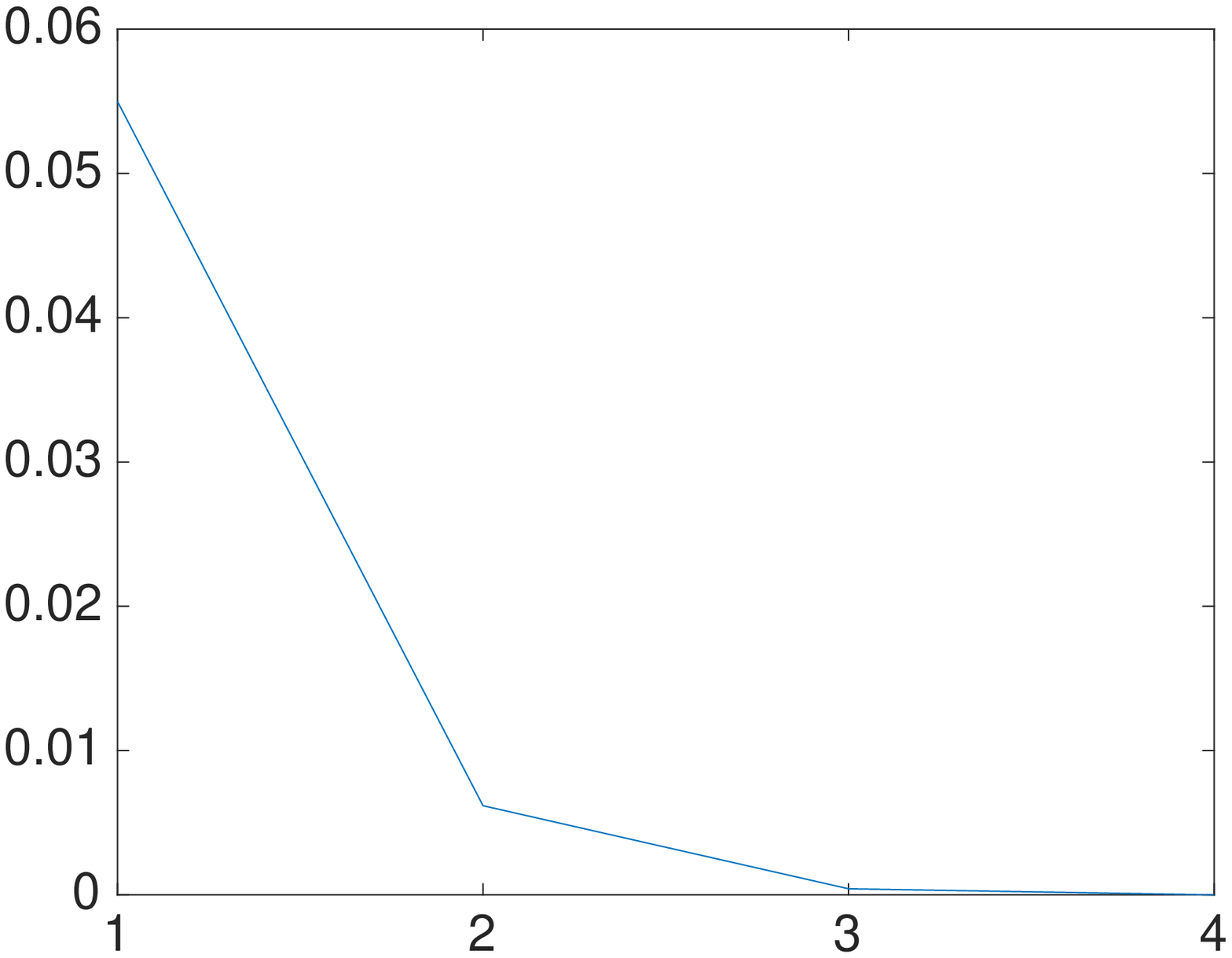} \put(-50,75){\small Example 3} \\
\includegraphics[trim={{.03\linewidth} {.05\linewidth} {.12\linewidth} {.01\linewidth}}, clip, width=0.23\linewidth, height = 0.12\linewidth, width=40mm, height=32mm]{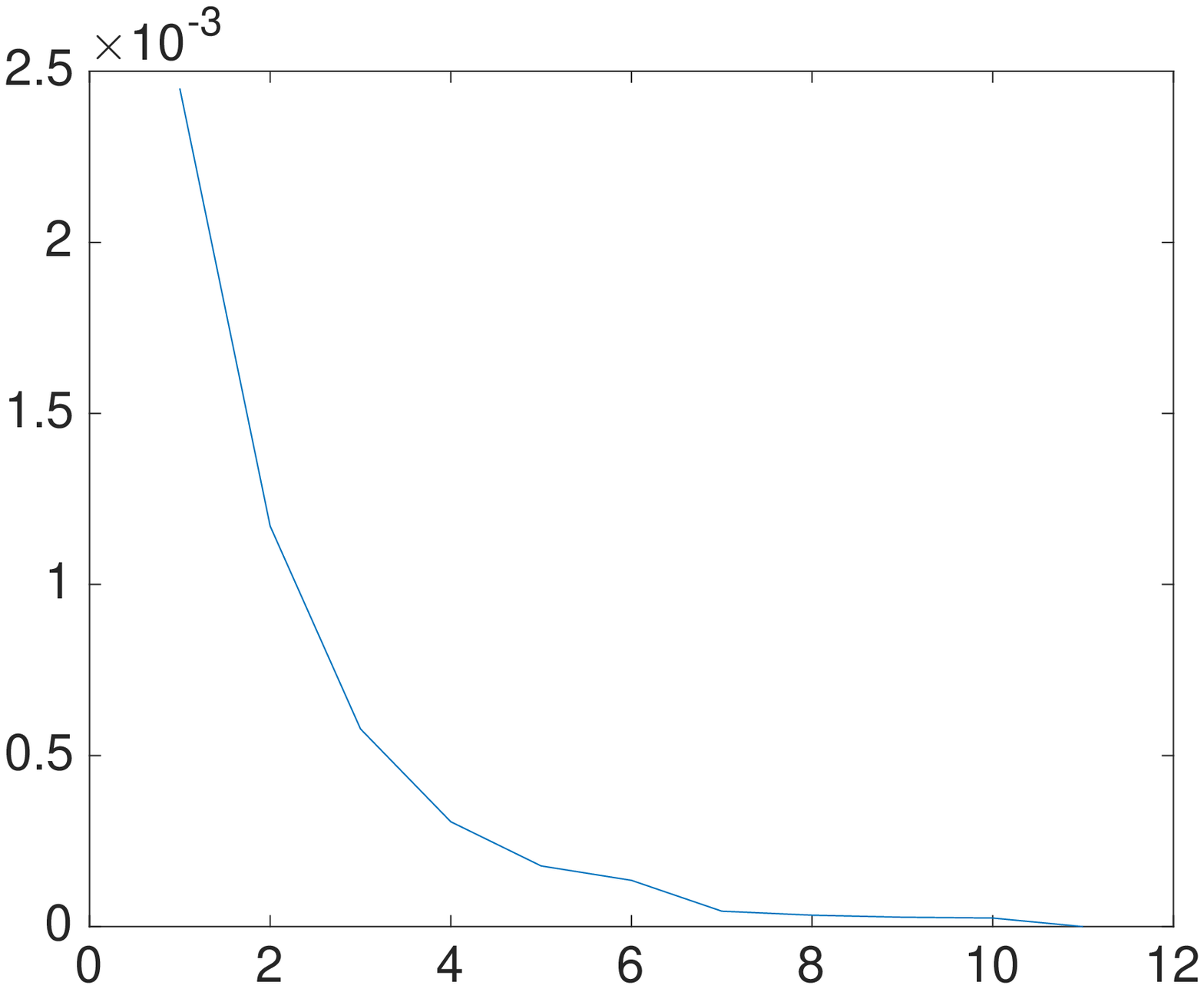} \put(-50,75){\small Example 4} \put(-122,43) {\rotatebox[origin=c]{90}{\small $\|{\zb \tau}^{(k)} - {\zb \tau}^{(k-1)} \|_2 $}}&
\includegraphics[trim={{.03\linewidth} {.05\linewidth} {.12\linewidth} {.01\linewidth}}, clip, width=0.23\linewidth, height = 0.12\linewidth, width=40mm, height=32mm]{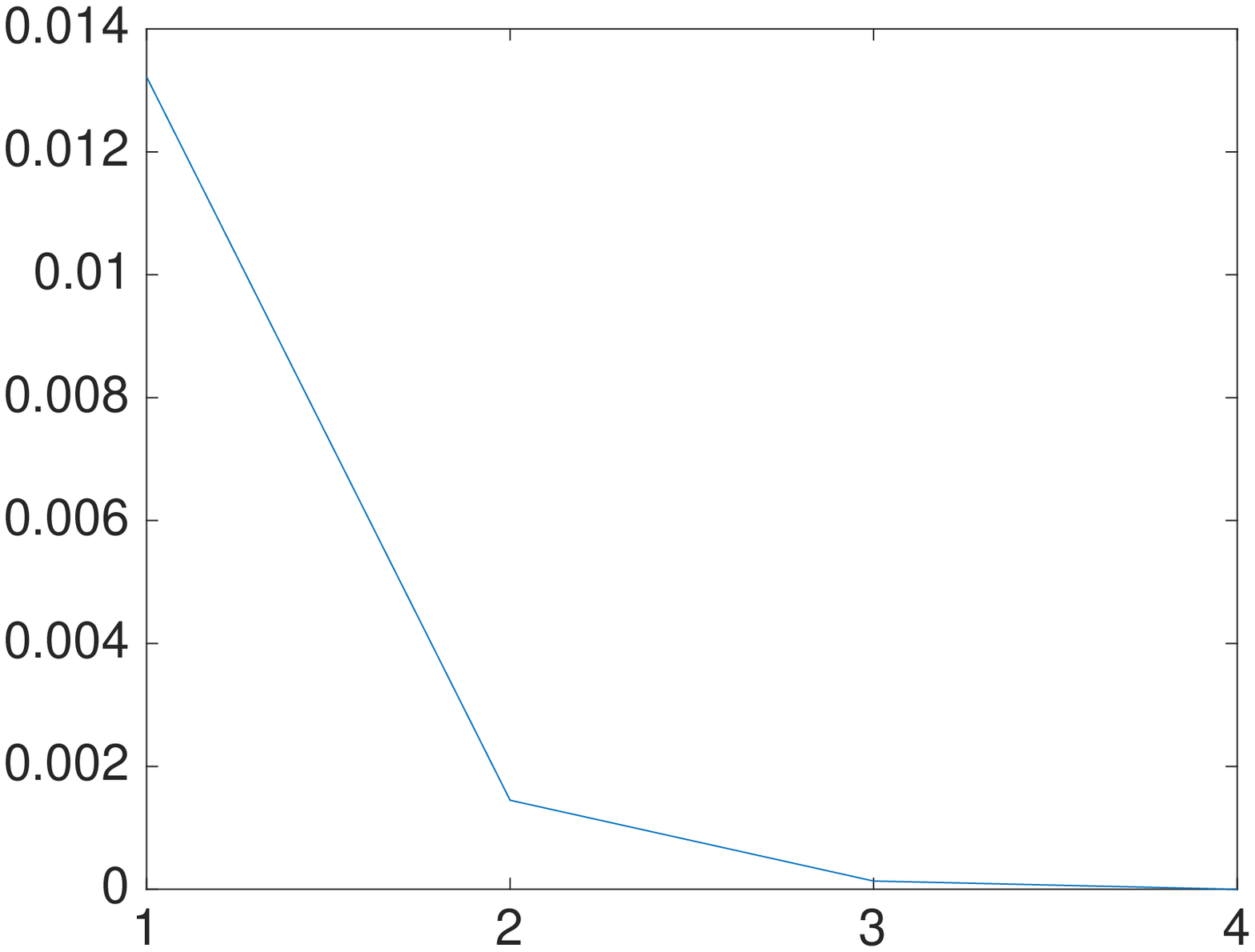} \put(-50,75){\small Example 5} \put(-50,65){\small (5-phase)} &
\includegraphics[trim={{.03\linewidth} {.05\linewidth} {.12\linewidth} {.01\linewidth}}, clip, width=0.23\linewidth, height = 0.12\linewidth, width=40mm, height=32mm]{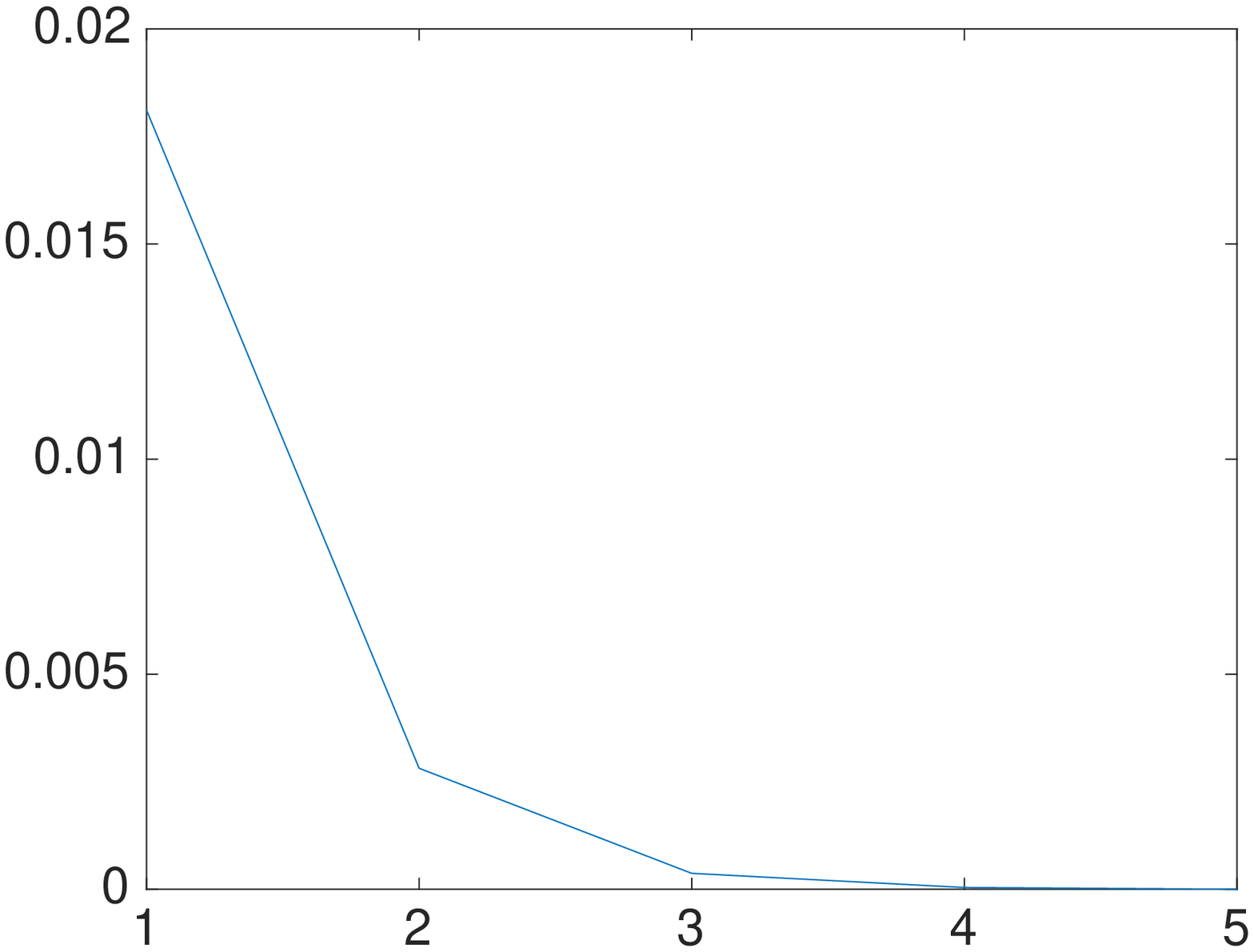}  \put(-50,75){\small Example 5} \put(-50,65){\small (10-phase)}  \\
\includegraphics[trim={{.03\linewidth} {.05\linewidth} {.12\linewidth} {.01\linewidth}}, clip, width=0.23\linewidth, height = 0.12\linewidth, width=40mm, height=32mm]{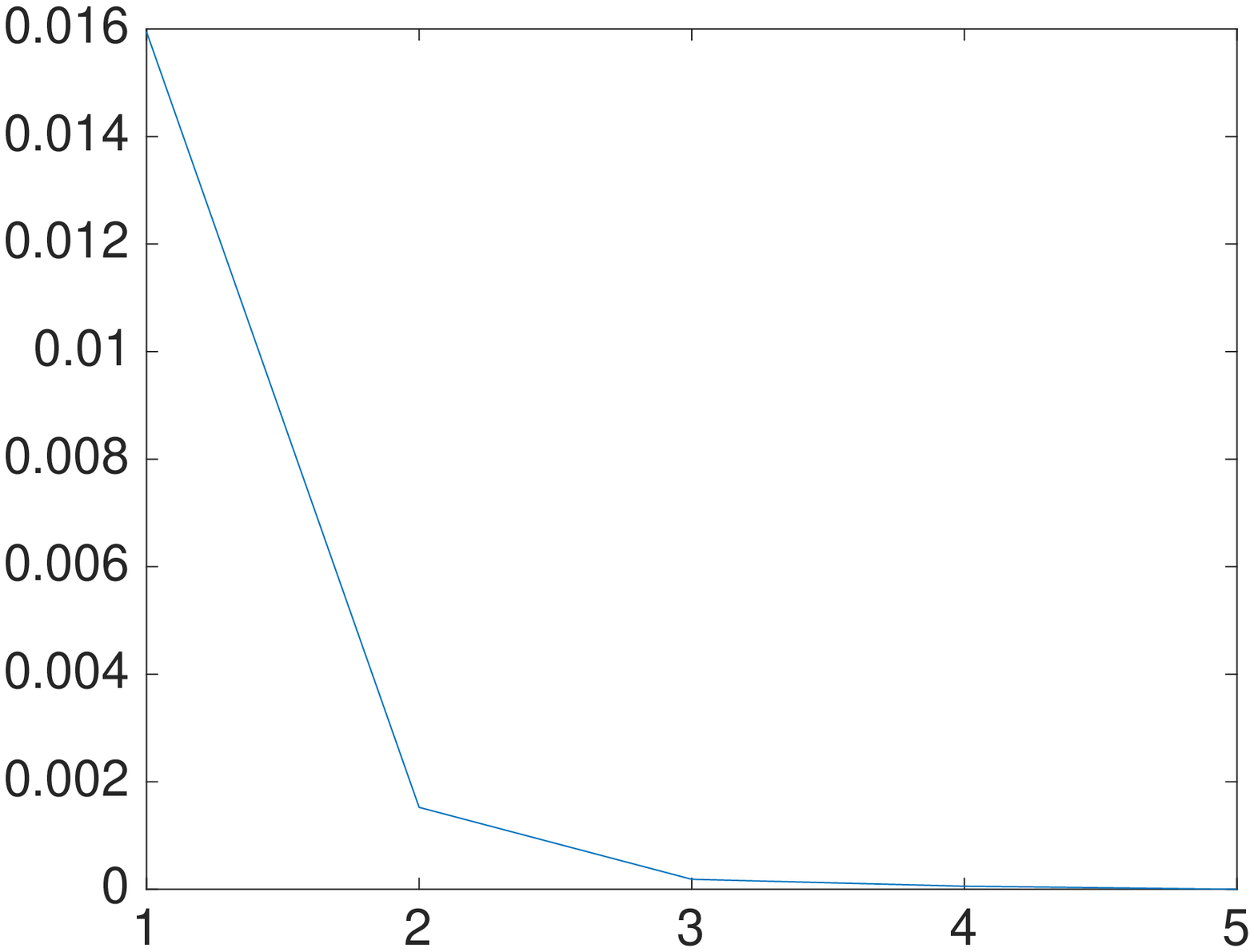}  \put(-50,75){\small Example 5} \put(-50,65){\small (15-phase)} &
\includegraphics[trim={{.03\linewidth} {.05\linewidth} {.12\linewidth} {.01\linewidth}}, clip, width=0.23\linewidth, height = 0.12\linewidth, width=40mm, height=32mm]{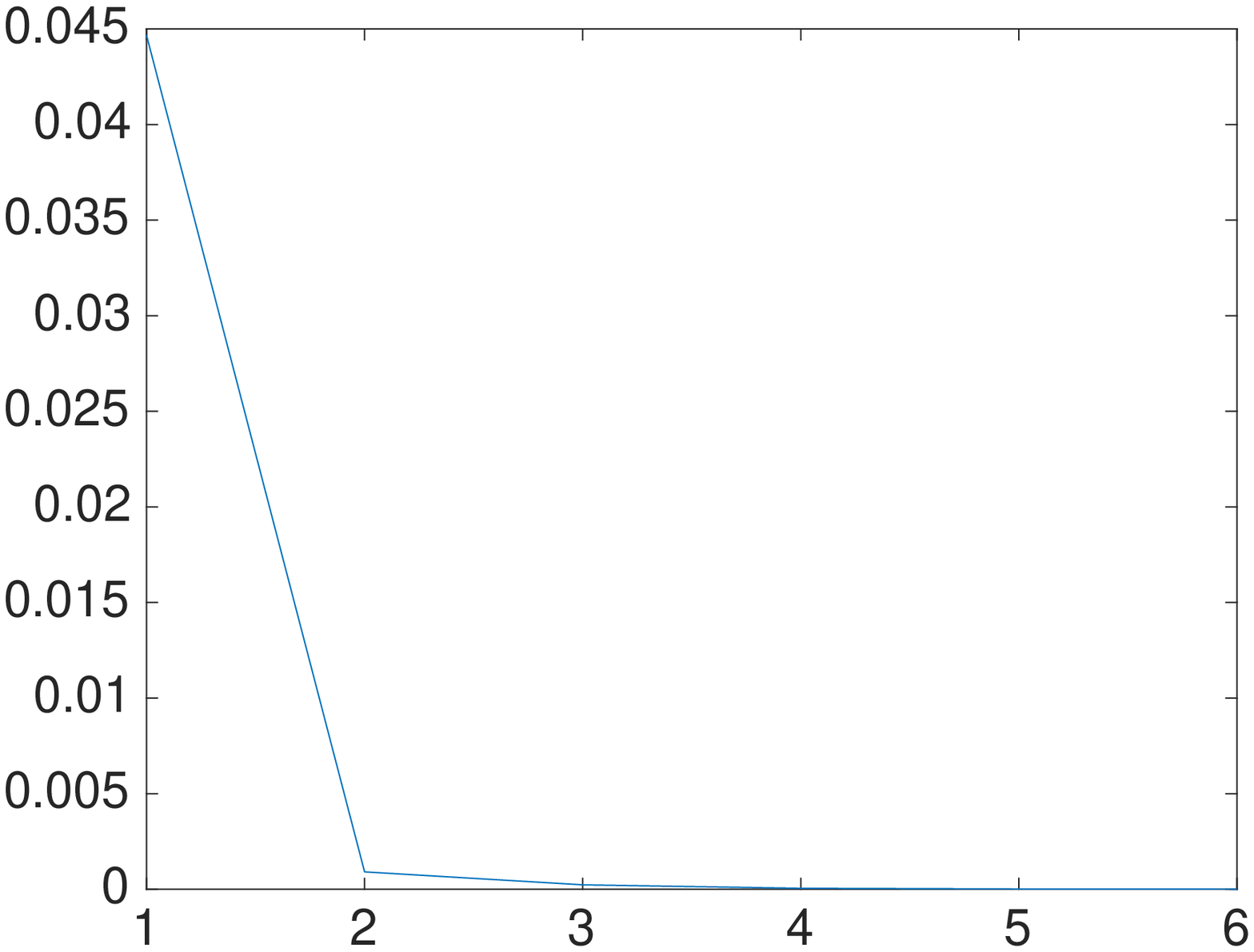} \put(-50,75){\small Example 6}  \put(-80,-8){\small Iteration steps} &
\includegraphics[trim={{.03\linewidth} {.05\linewidth} {.12\linewidth} {.01\linewidth}}, clip, width=0.23\linewidth, height = 0.12\linewidth, width=40mm, height=32mm]{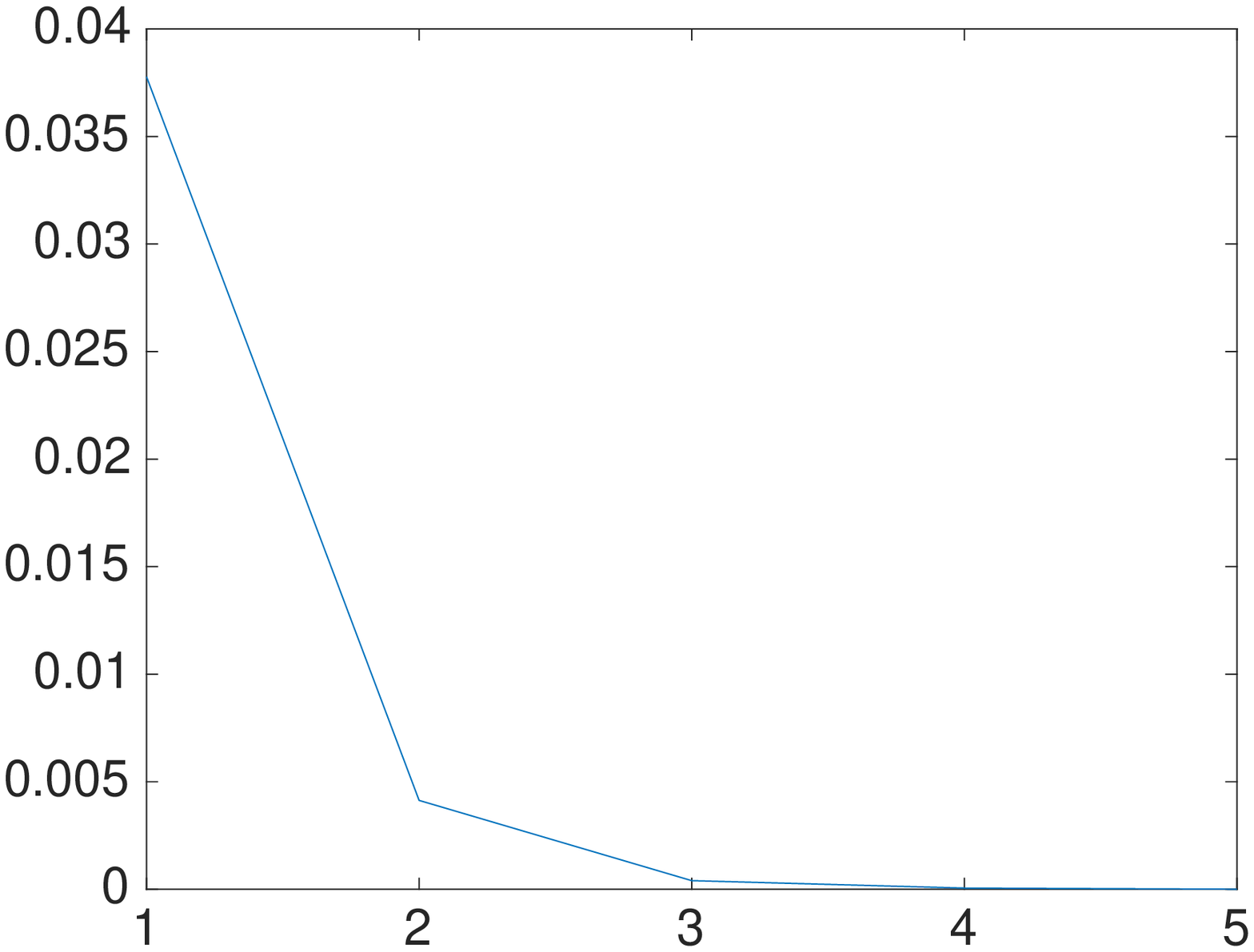}  \put(-50,75){\small Example 7} 
\end{tabular}
\end{center}
\caption{Convergence history of {\zb \tau} of our T-ROF algorithm corresponding to iteration steps in Examples 1--7. 
The vertical axis represents the absolute error of ${\zb \tau}$ in two consecutive iterations, and the horizontal axis represents the iteration steps.
}\label{error-tau}
\end{figure}

\subsection{Discussion About the Threshold ${\zb \tau}$ }
We report, in Fig. \ref{error-tau}, the convergence history of ${\zb \tau}$ of our T-ROF algorithm \ref{alg:t-rof} corresponding to iteration steps
for Examples 1--7. From Fig. \ref{error-tau}, we can see that ${\zb \tau}$ always
not only converges, but converges very quickly -- just a few steps are enough (generally within ten iterations for the examples
shown in this paper).

It is worth mentioning that, for all the Examples 1--7, the rule \eqref{eqn:seg-meaningful-omega} and the criterion derived in \eqref{eqn:add-rule}
are always satisfied at each iteration. This means that these rules are indeed very easy to fulfil, if one uses meaningful number of phases $K$
(here $K$ is our prior knowledge for each segmentation example) and 
initializations (here we use K-means to obtain initializations). The chances that these criterions are likely to be invoked based on our tests are the cases, 
for example, meaningless number of phases $K$ and/or initializations. For example, given a test image which has two phases, using $K = 3$ would be highly likely to 
obtain an empty phase at some iteration and one would finally obtain a segmentation result with two phases. An empty phase that is also likely to be obtained
is when two thresholds are too close in the initialization. This performance is actually a kind of
automatic error correction, which is an innate ability of our T-ROF algorithm. In contrast, the methods solving the PCMS model 
(like the ones we compared in this paper) cannot have.

\subsection{Real-World Example}
In the previous examples, we mainly investigated the performance of the aforementioned segmentation methods
using synthetic images and quantified their accuracy using SA (segmentation accuracy) defined in \eqref{eqn:sa}. 
Before closing this section, to complement the test, we hereby test those methods using a manual segmentation of a real-world image, 
the retina image, which is from the DRIVE data-set\footnote{http://www.isi.uu.nl/Research/Databases/DRIVE/}. 
Fig. \ref{example:retina} (a) and (b) are the clean manual segmentation image and the noisy image
generated by adding Gaussian noise with mean 0 and variance 0.1. Note that in Fig. \ref{example:retina} (a), 
we changed the original binary manual segmentation image to three phases by lowering the intensity of those vessels on the right hand side
from 1 to 0.3; the intensities of the background and the vessels on the left hand side are respectively 0 and 1. 
Obviously, segmenting the noisy three-phase image in Fig. \ref{example:retina} (b) is extremely challenging 
due to those thin blood vessels which have a big chance of being smoothed out. 

To quantify the segmentation accuracy, in addition to the measure SA in \eqref{eqn:sa} which
quantifies the whole segmentation accuracy, we here also use the DICE score which is able
to quantify the accuracy for individually segmented phases, i.e., 
\begin{equation*}
\textrm{DICE}(\Omega_i, \Omega_i^\prime) := \frac{2 |\Omega_i \cap \Omega_i^\prime|}{|\Omega_i| + |\Omega_i^\prime|},
\end{equation*}
where $\Omega_i$ and $\Omega_i^\prime$ are respectively the segmentation result and the ground truth 
corresponding to phase $i$, and $|\Omega_i|$ is the cardinality of set $\Omega_i$ in discrete setting.
Note that the DICE score not only involves the correctly segmented areas like the measure SA in \eqref{eqn:sa}, but the incorrectly areas. 

Fig. \ref{example:retina} shows the segmentation results of the methods compared, and Table \ref{exp:real-image} gives the quantitative results
including the SA and the DICE score which quantifies the three individually segmented phases, i.e.,
the areas of the background, vessels on the right hand side with lower intensity and vessels on the left hand side.
We clearly see that all the methods successfully segmented the vessels on the left hand side, but
 failed to segment the vessels on the right hand side (please refer to
Section \ref{subsec:related} for the theoretical discussion about the drawback of the PCMS model for case $K > 2$), except for method \cite{CCZ13} and 
our proposed T-ROF method (also with much faster speed). Particularly, to segment the vessels on the right hand side which have lower intensity, 
the proposed T-ROF method achieved higher DICE score compared to the SaT method \cite{CCZ13} -- 0.7749 versus 0.5673 -- 
which further verifies the excellent performance of updating the threshold $\zb \tau$
using the rule proposed in \eqref{trof-thd}.

\begin{figure}[!htb]
\begin{center}
\begin{tabular}{cc}
\includegraphics[width=\www, height=\www]{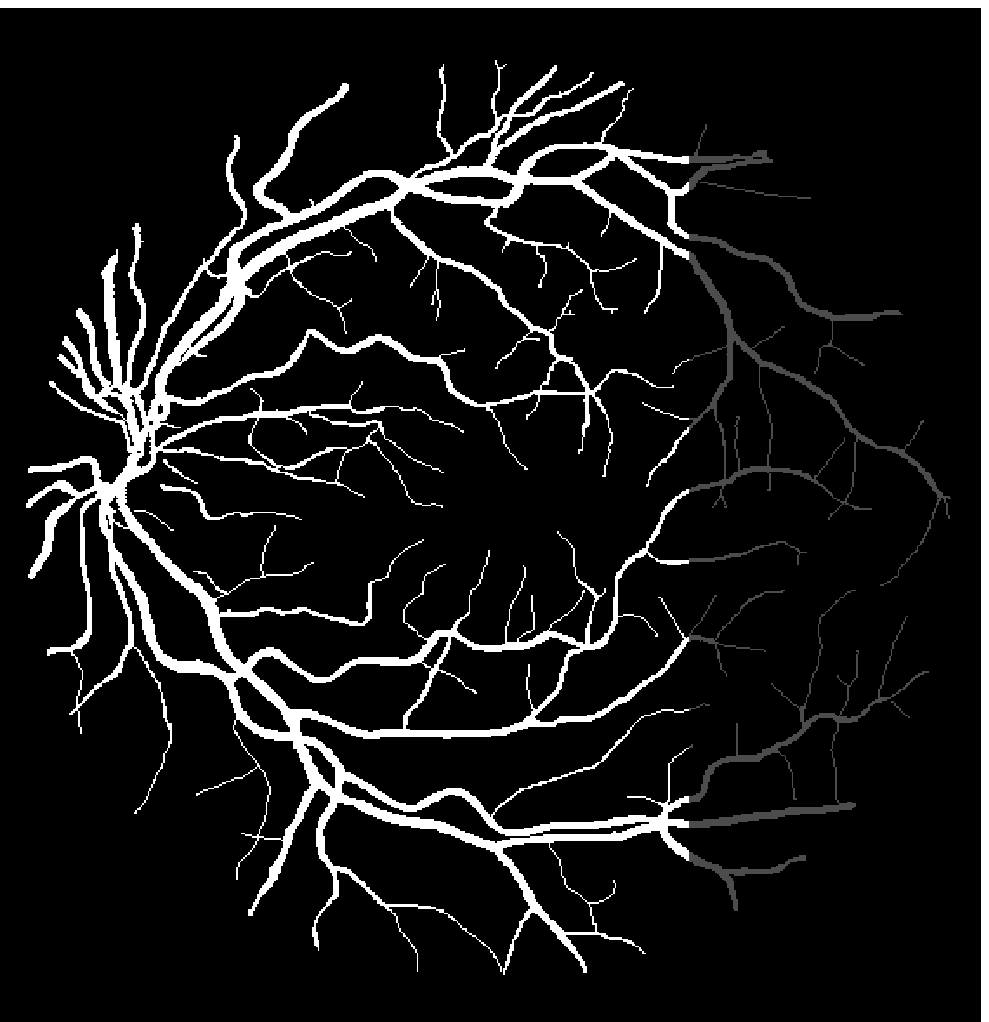} &
\includegraphics[width=\www, height=\www]{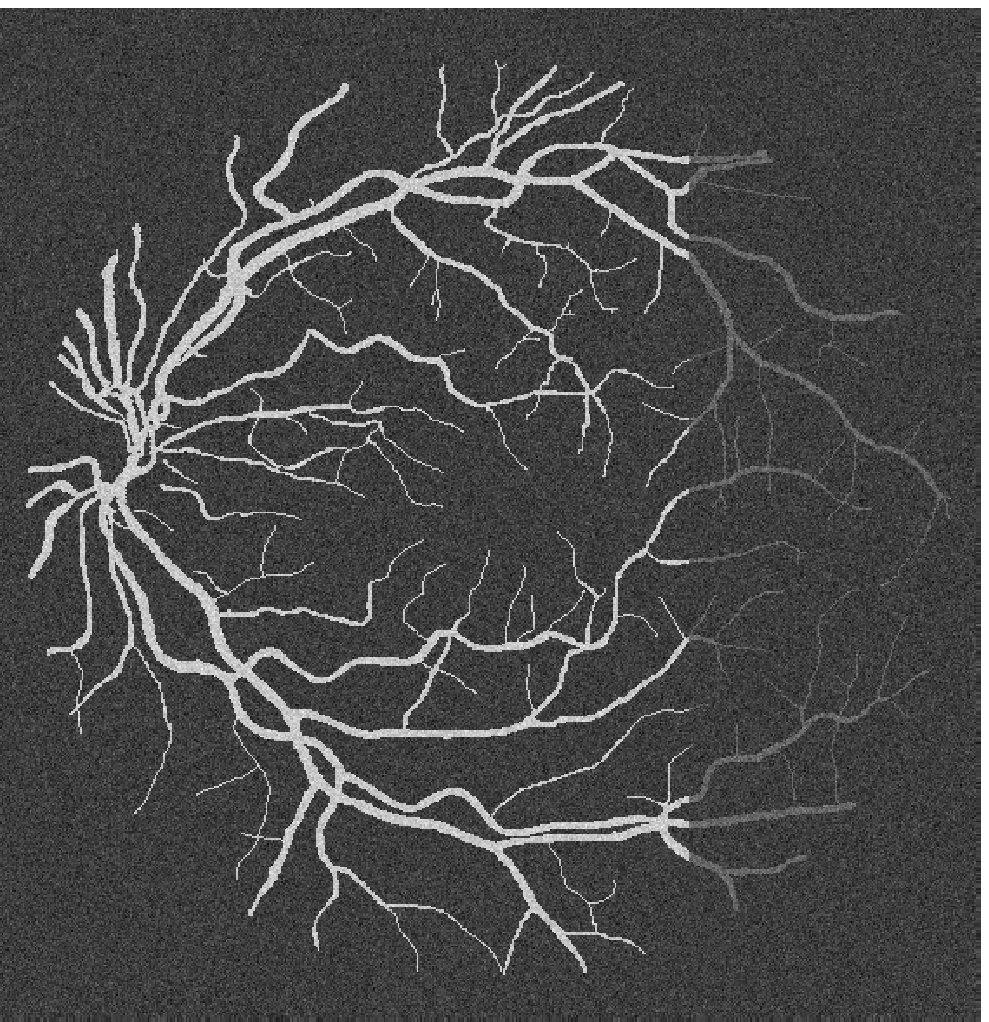}\\
(a) Clean image & (b) Noisy image 
\end{tabular}
\begin{tabular}{ccc}
\includegraphics[width=\www, height=\www]{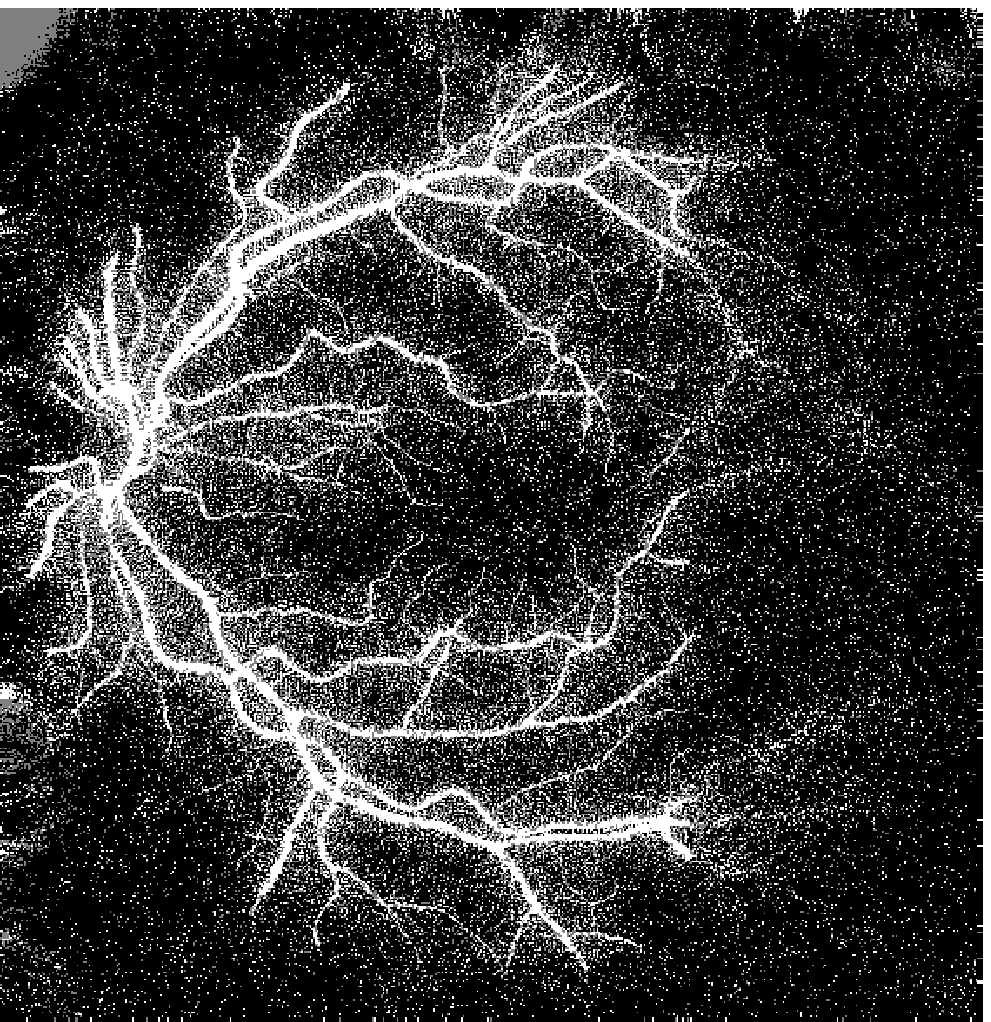} &
\includegraphics[width=\www, height=\www]{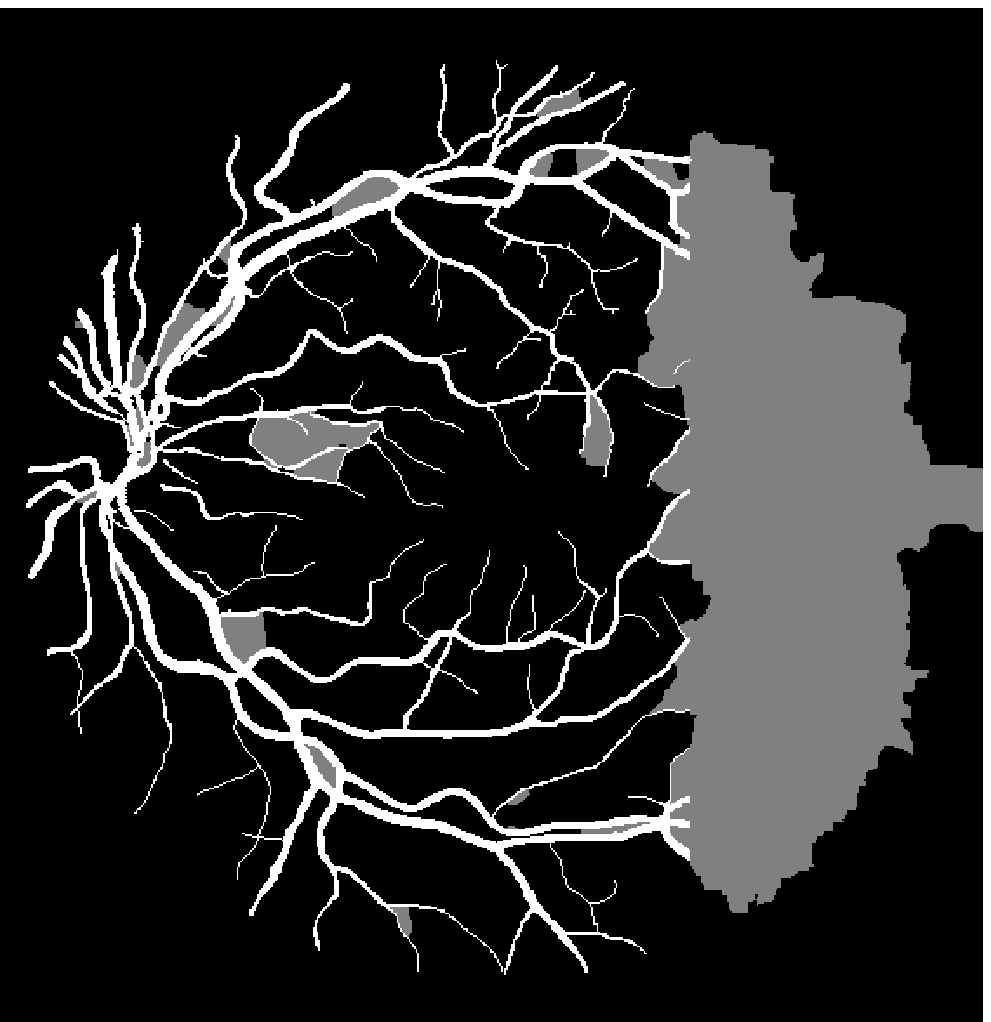} &
\includegraphics[width=\www, height=\www]{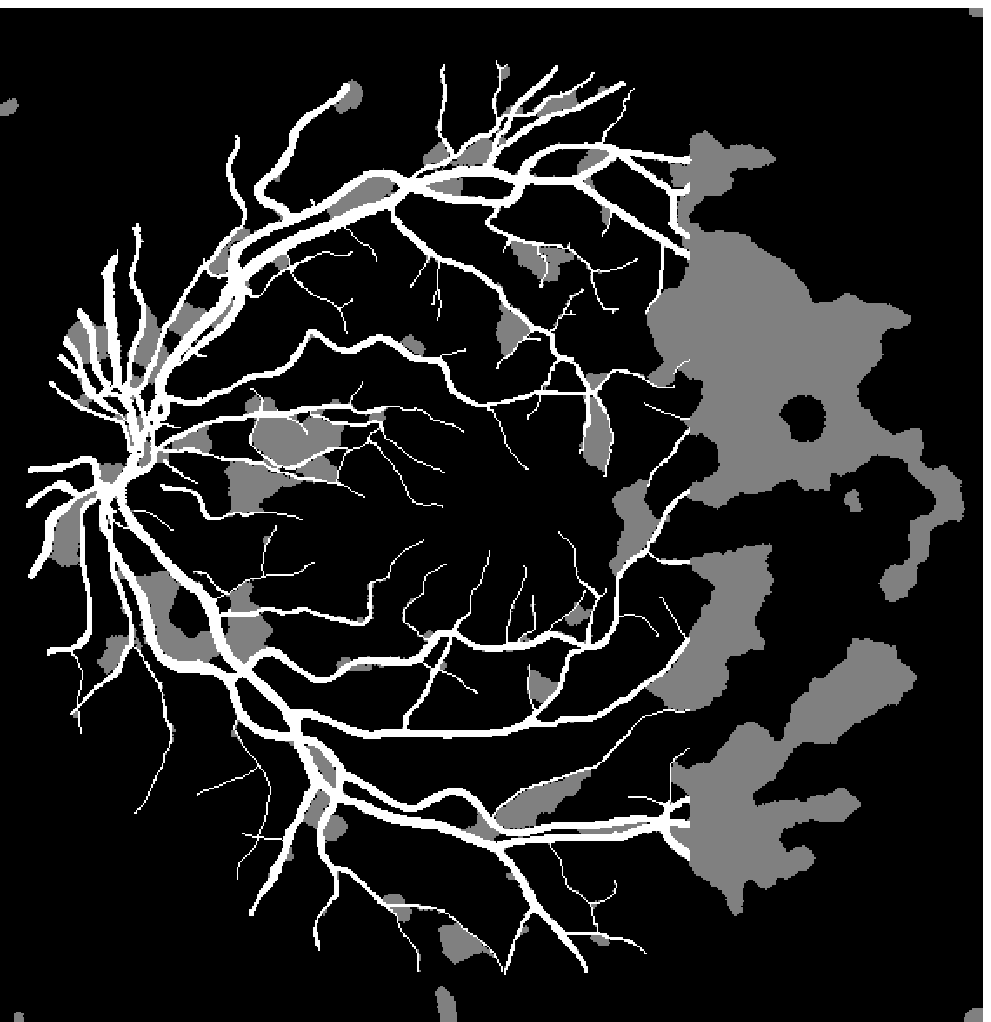}\\
(c) Li \cite{LNZS10} & (d) Pock \cite{PCCB09}  &  (e) Yuan \cite{YBTB10}  \\
\includegraphics[width=\www, height=\www]{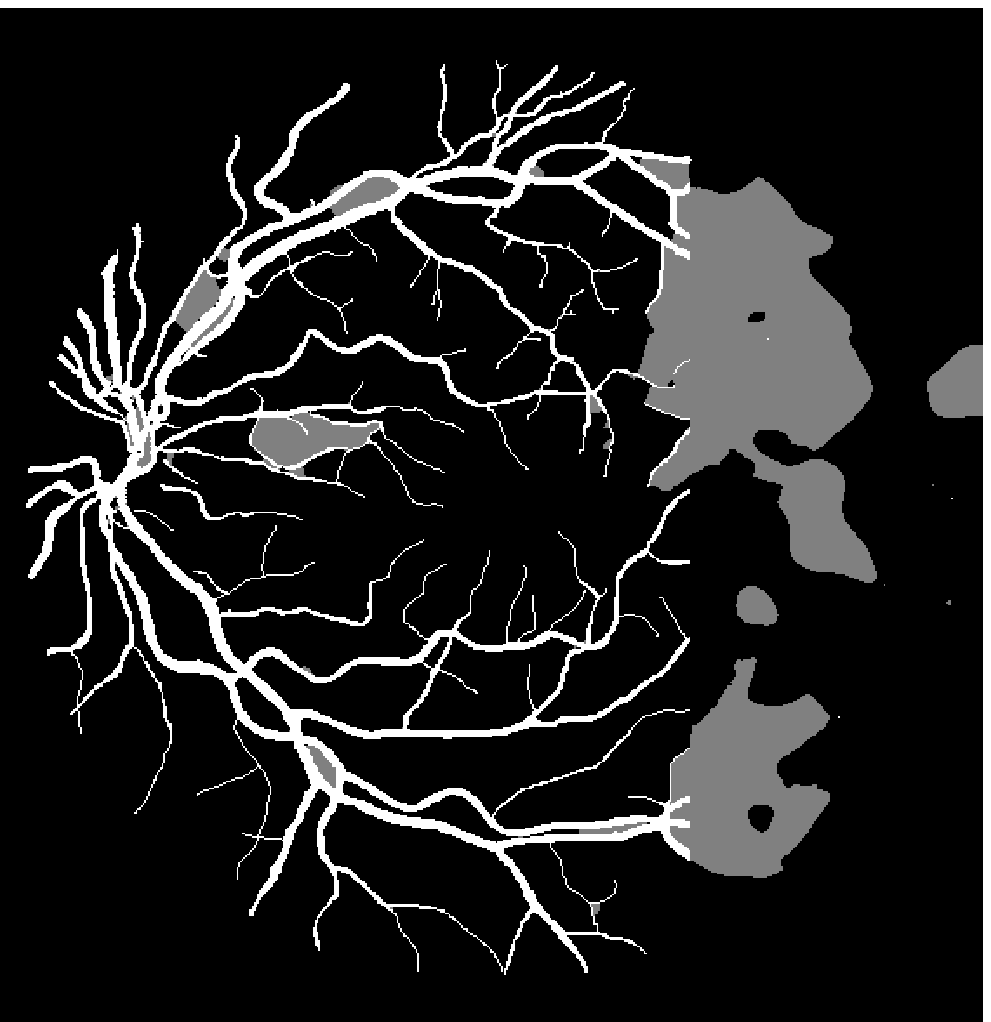} &
\includegraphics[width=\www, height=\www]{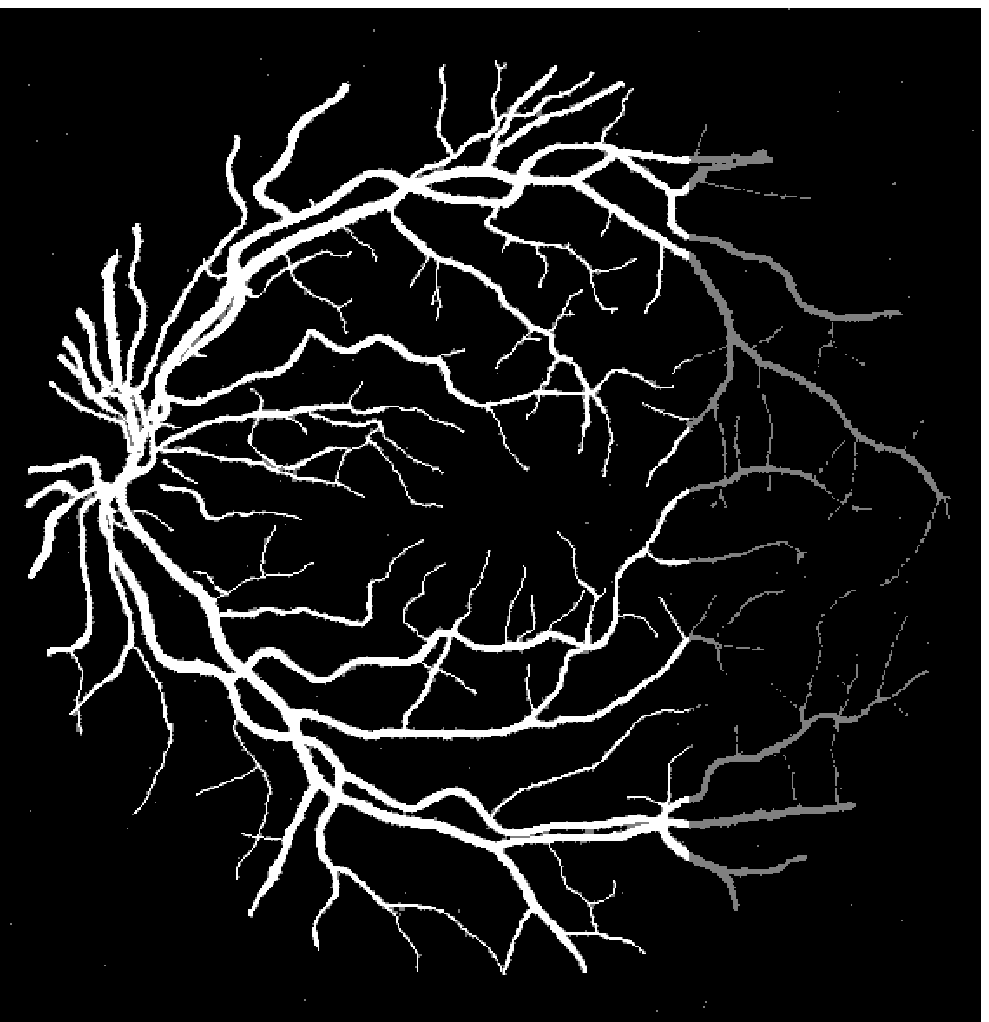} &
\includegraphics[width=\www, height=\www]{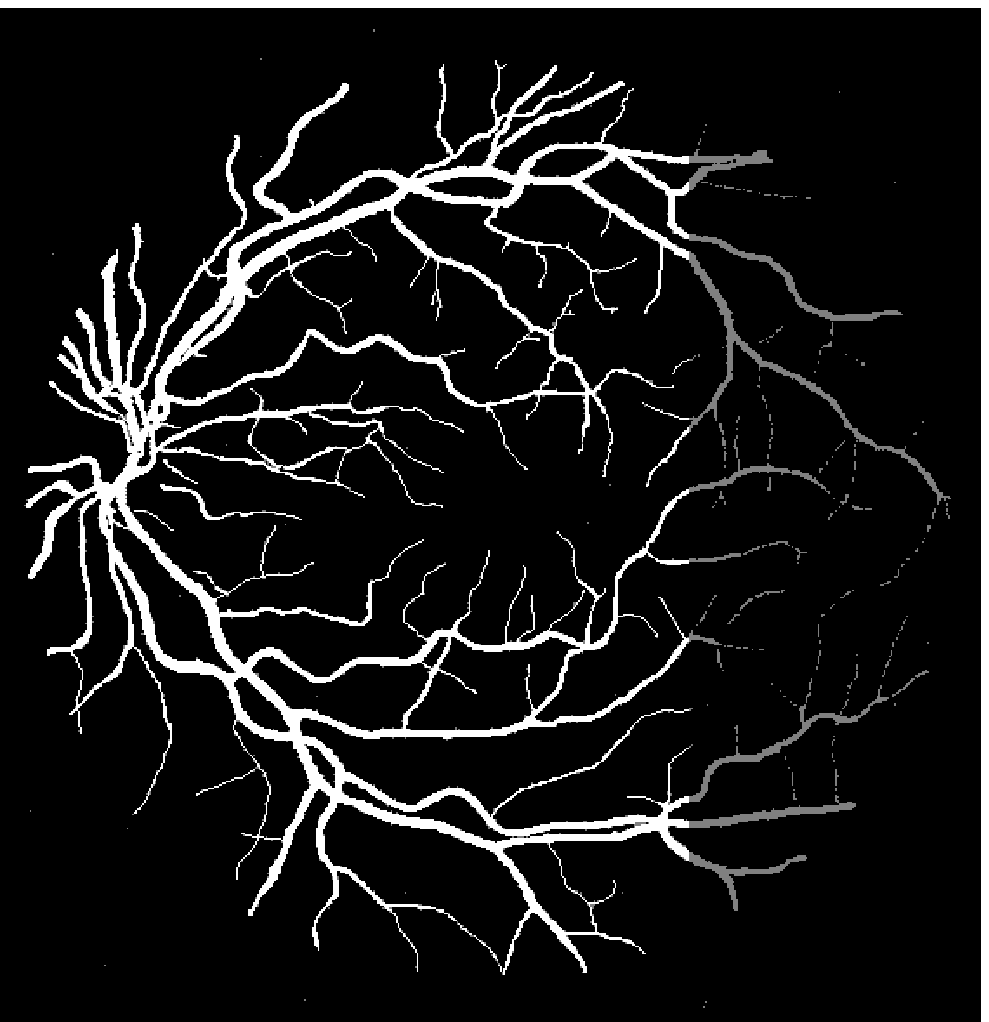}\\
(f) He \cite{HHMSS12} & (g) Cai \cite{CCZ13} & (h) Ours (Ite. 15)
\end{tabular}
\end{center}
\caption{Retina image segmentation which contains extremely thin vessels
(size $584\times 565$). (a): clean image; (b): noisy image; (c)--(h): results of methods \cite{LNZS10,PCCB09,YBTB10,HHMSS12,CCZ13} 
and our T-ROF method, respectively.
}\label{example:retina}
\end{figure}

\begin{table}[!h]
\centering \caption{Quantitative comparison for the retina test example in Fig. \ref{example:retina} : $\lambda/\mu$, iteration (Ite.) steps, CPU time in seconds, 
SA, and DICE score for individually segmented phases (phases $\Omega_0, \Omega_1$ and $\Omega_2$ are the areas of the background, vessels on the right hand side with
lower intensity and vessels on the left hand side, respectively) .
The iteration steps of our T-ROF method 35 (15) mean that 35 and 15 iterations are respectively executed to find $u$ and $\zb \tau$ in 
Algorithm \ref{alg:t-rof}.}
\begin{tabular}{|c|r|r|r|r|r|r|}
\cline{2-7}
 \multicolumn{1}{r|}{}   & Li \cite{LNZS10} & Pock \cite{PCCB09} & Yuan \cite{YBTB10} & He \cite{HHMSS12} &
 Cai \cite{CCZ13} & Our method \\ \hline
 $\lambda/\mu$ & 100 & 100 & 20 & 100 & 25 & 25 \\ \cline{1-7}
 Ite. & 100 & 150 & 300 & 100 & 75 & 35 (15) \\ \hline
 Time & 2.67 & 18.67 & 16.79 & 22.84 & 3.51 & 2.09 \\ \hline \hline
 SA & 0.7790  & 0.8462   & 0.8823   & 0.9116  & 0.9803 & 0.9929 \\ \hline \hline
 ${\rm DICE}_{\Omega_0}$ &  0.8768  & 0.9080  &  0.9311  &  0.9494  & 0.9891 & 0.9962 \\ \hline 
  ${\rm DICE}_{\Omega_1}$ &  0.0278 & 0.1487  & 0.1764  &  0.1435 & 0.5673 & 0.7749 \\ \hline 
   ${\rm DICE}_{\Omega_2}$ & 0.5045  & 1.0000  & 1.0000  &  0.9999 & 0.9996 & 0.9991 \\ \hline 
\end{tabular}
\label{exp:real-image}
\end{table}

\section{Summary and Conclusions}\label{sec:conclusions}
In this paper, we unveiled a linkage between the PCMS model and the ROF model,
which is important to build the connection between image segmentation and image
restoration problems. The built linkage theoretically validates our proposed novel segmentation
methodology -- pursuing image segmentation by applying image restoration
plus thresholding.
This new segmentation methodology can circumvent the innate non-convex property of
the PCMS model, and thus improves the segmentation performance in both efficiency and effectiveness.
In particular, as a direct by-product of the built linkage, we proposed a segmentation method named
T-ROF method. The convergence of this method has also been proved.
Elaborate experimental results were presented which all support the excellent performance of
the proposed T-ROF method in terms of segmentation quality and computation time.
For the future work, considering using other thresholding rules, e.g. using the median instead of the mean in
the T-ROF method, may be worthwhile. Moreover, similar to the linkage built in this paper between the
PCMC and ROF models, investigating the relationship
between other models, such as those variants of the PCMS and ROF models
respectively in image segmentation and image restoration, will also be of interest.

\section*{Appendix}
The codebooks (${\zb m} := \{m_i\}_{i=0}^{K-1}$) computed by \cite{BEF84} and used for methods \cite{PCCB09,YBTB10} and our T-ROF method
(as initializations) in Examples 1-7 are listed in Table \ref{table-cb}.

\begin{table}[!h]
\centering \caption{The codebooks (${\zb m} := \{m_i\}_{i=0}^{K-1}$) computed by \cite{BEF84} and used for 
methods \cite{PCCB09,YBTB10} and our T-ROF method (as initializations) in Examples (Exa.) 1--7.}
\begin{tabular}{|l|l|}
\hline
  \multicolumn{1}{|c|}{Example}  & \multicolumn{1}{c|}{Codebook } \\ \hline
  {Exa. 1} (2-phase)  & $(0.0290, 0.7360)$  \\  \hline
  {Exa. 2} (2-phase)  & $(0.4081, 0.6045)$  \\  \hline
  {Exa. 3} (5-phase)  & $(0.0311, 0.3372, 0.5360, 0.7175, 0.9324)$  \\  \hline
  {Exa. 4} (4-phase) & $(0.0017, 0.3164, 0.6399, 0.8773)$  \\  \hline
 \multirow{1}{*}{{Exa. 5} (5-phase)} & (0.0756, 0.2878, 0.5033, 0.7153, 0.9238) \\ \hline  
  \multirow{2}{*}{{Exa. 5} (10-phase)}  & (0.0292, 0.1353, 0.2405, 0.3481, 0.4546, 0.5588, 0.6647, 0.7683,   \\ 
 	& \ \ 0.8708, 0.9715) \\  \hline
 \multirow{2}{*}{{Exa. 5} (15-phase)}  & (0.0143, 0.0842, 0.1560, 0.2254, 0.2959, 0.3661, 0.4360, 0.5053,  \\
 	& \ \ 0.5738, 0.6437, 0.7126, 0.7827, 0.8503, 0.9180, 0.9856) \\  \hline
  {Exa. 6} (3-phase) & $(0.2460, 0.3610, 0.5777)$  \\  \hline
  {Exa. 7} (4-phase) & $(0.0449, 0.2789, 0.6649, 0.9366)$  \\  \hline
\end{tabular}
\label{table-cb}
\end{table}



\end{document}